\begin{document}
\hfuzz=6pt

\widowpenalty=10000

\newtheorem{cl}{Claim}
\newtheorem{theorem}{Theorem}[section]
\newtheorem{proposition}[theorem]{Proposition}
\newtheorem{coro}[theorem]{Corollary}
\newtheorem{lemma}[theorem]{Lemma}
\newtheorem{definition}[theorem]{Definition}
\newtheorem{assum}{Assumption}[section]
\newtheorem{example}[theorem]{Example}
\newtheorem{remark}[theorem]{Remark}
\newcommand{\ra}{\rightarrow}
\renewcommand{\theequation}
{\thesection.\arabic{equation}}
\newcommand{\ccc}{{\mathcal C}}
\newcommand{\one}{1\hspace{-4.5pt}1}
\def \SL {\sqrt {\mathcal L}}

\newcommand{\nf}{\infty}
\newcommand\HH{{\mathcal H}}
\newcommand\LL{{\mathcal L}}
\newcommand\A{{\mathcal A}}
\newcommand\Diff{\mathrm{Diff}}
\newcommand\RR{\R}
\newcommand\CC{\mathbb{C}}
\newcommand\NN{\mathbb{N}}
\def\sfOh{{}^{\SF} \Omega^{1/2}}
\def\scOh{{}^{\operatorname{sc}} \Omega^{1/2}}
\def\dist{C^{-\infty}}
\def\HSL { H^p_{L, S_h}(X) }
\renewcommand\Re{\operatorname{Re}}
\renewcommand\Im{\operatorname{Im}}
\def\ang#1{\langle #1 \rangle}
\newcommand\CIdot{\dot C^\infty}

\newcommand\pihat{\hat \pi}
\newcommand\pbf{\operatorname{pbf}}
\newcommand\Vsc{\mathcal{V}_{\SC}}
\newcommand\Vkb{\mathcal{V}_{k,b}(\MMkb)}
\newtheorem{ex}[theorem]{Example}

\newcommand\restr[2]{{
  \left.\kern-\nulldelimiterspace 
  #1 
  \vphantom{\big|} 
  \right|_{#2} 
  }}
\newcommand{\mar}[1]{{\marginpar{\sffamily{\scriptsize
        #1}}}}
\newcommand{\as}[1]{{\mar{AS:#1}}}
\newcommand{\rs}[1]{{\mar{RS:#1}}}

\newcommand{\comment}[1]{\vskip.3cm
\fbox{%
\parbox{0.93\linewidth}{\footnotesize #1}}
\vskip.3cm}

\newcommand{\Ai}{{\rm Ai}}
\newcommand{\Bi}{{\rm Bi}}

\newcommand{\mc}{\mathcal}
\newcommand{\rr}{\mathbb{R}}
\newcommand{\nn}{\mathbb{N}}
\newcommand{\cc}{\mathbb{C}}
\newcommand{\hh}{\mathbb{H}}
\newcommand{\zz}{\mathbb{Z}}
\newcommand\D{\mathcal{D}}
\newcommand{\disp}[1]{\displaystyle{#1}}
\newcommand{\und}[1]{\underline{#1}}
\newcommand{\norm}[1]{\left\Vert#1\right\Vert}
\newcommand{\la}{\lambda}
\newcommand{\eps}{\varepsilon}
\newcommand{\epsi}{\varepsilon}
\newcommand{\pl}{\partial}
\newcommand{\supp}{\mbox{\rm{supp}}\,}
\newcommand{\x}{\times}
\newcommand\mcS{\mathcal{S}}
\newcommand\mcB{\mathcal{B}}
\newcommand\C{\mathbb{C}}
\newcommand\N{\mathbb{N}}
\newcommand\R{\mathbb{R}}
\newcommand\G{\mathbf{G}}
\newcommand\T{\mathbf{T}}
\newcommand\Z{\mathbb{Z}}
\newcommand\dd{\,{\rm d}}
\newcommand{\cdo}{\, \cdot \,}
\newcommand\wrt{\,{\rm d}}


\title[Bochner-Riesz profile of anharmonic oscillator $\LL=-\frac{d^2}{dx^2}+|x|$ ]
{Bochner-Riesz profile of anharmonic oscillator   $\LL=-\frac{d^2}{dx^2}+|x|$  }
\medskip

\author{Peng Chen, \  Waldemar Hebisch \ and \ Adam Sikora}
\noindent
\address{Peng Chen, Department of Mathematics, Macquarie University, NSW 2109, Australia}
\noindent
\email{achenpeng1981@163.com}
\address{
Waldemar Hebisch,
Mathematical Institute, University of Wroc{\l}aw,
Pl. Grunwaldzki~2/4,
 50-384 Wroc{\l}aw, Poland  }
\noindent
\email{hebisch@math.uni.wroc.pl }
\address{
Adam Sikora, Department of Mathematics, Macquarie University, NSW 2109, Australia}
\email{adam.sikora@mq.edu.au}

\date{\today}
\thanks{This research was supported by Australian Research Council  (A.S.).}

\subjclass[2000]{42B15, 42B20,   47F05.} \keywords{ Spectral
multipliers, Plancherel estimate, Bochner-Riesz means.}

\begin{abstract}
We investigate spectral multipliers, Bochner-Riesz means and convergence of eigenfunction expansion  corresponding to the Schr\"odinger operator with anharmonic potential
${\mathcal L}=-\frac{d^2}{dx^2}+|x|$. We show that the Bochner-Riesz profile of the operator $\LL$ completely coincides
with such profile of the harmonic oscillator ${\mathcal H}=-\frac{d^2}{dx^2}+x^2$.
 It is especially surprising because the Bochner-Riesz profile for the one-dimensional standard Laplace operator  is known to be essentially different and the case of operators $\HH$ and $\LL$ resembles more the profile of multidimensional Laplace operators. Another surprising element of the
main obtained result is the fact that the proof is not based on
restriction type estimates and instead entirely new perspective have
to be developed to obtain the critical exponent for Bochner-Riesz
means convergence.
\end{abstract}

\maketitle

  \tableofcontents

\section{Introduction}
\setcounter{equation}{0}

One of the most significant and central problems in harmonic
analysis is convergence of the Fourier transform and series. This
problem leads in a natural way to the question of convergence of
Bochner-Riesz means of Fourier integrals and series. In a systematic
manner the topic was initiated in the 1930s by Bochner. Since then
it has attracted very significant attention. Nevertheless there
still remain many fundamental problems to be resolved. Detailed
account of the main ideas and development of this area can be found
for example in \cite[Chapter 8]{Gr09}, \cite[Section IX.2]{St},
\cite[Chapter II]{Sog0}, \cite{Tao} or  \cite{LY}.

Using the language of the spectral theory the problem  of Convergence of Bochner-Riesz means of Fourier  series can be formulated for any eigenfunction expansion of any abstract self-adjoint operators. Convergence and equivalently boundedness of Bochner-Riesz
means for general differential operators or varies specific operators were studied among the other  by Christ, Karadzhov, Koch, Ricci, Seeger, Sogge, Stempak, Tataru, Thagavelu  and Zienkiewicz, see \cite{ChS, Ka, KoRi, KoTa, SS, Sog1, Sog4, StZi, T}. See also \cite{GHS}. This paper is a continuation of these affords in particular case of the operator
${\mathcal L}=-\frac{d^2}{dx^2}+|x|$.

The theory of $L^p$ spectral multipliers is essentially equivalent to Bochner-Riesz analysis
but is more flexible and precise, see discussion in Section \ref{sec7}. Therefore we  adopt this approach in this paper and we state our main result Theorem~\ref{main} below in the
language of spectral multipliers. In this context it is worth mentioning that the theory of $L^p$ spectral multipliers itself also attracts significant interest. Initially spectral theory for self-adjoint
operators was motivated by Fourier multiplier type results of Mikhlin and H\"ormdander
\cite{Hor, Mik}. These results restricted to radial Fourier multipliers can be written in
terms of spectral multipliers for standard Laplace operators and opened question of possible
generalisation to larger class of self-adjoint operators, see also discussion in  \cite{Ch3}.
In our approach we investigate  Mikhlin and H\"ormdander multipliers  together with Bochner-Riesz analysis as essentially the same research area.
The literature devoted to the spectral multipliers is much to broad to be listed here so
we refer the reader  to \cite{CoS, COSY, DOS, GHS}  for  large class of examples of papers devoted to this area of harmonic analysis. Some recent developments going in somehow different direction can be found in \cite{MaMu}. A few other interesting examples of spectral multiplier
results in various settings can be found in \cite{Ale1, An, ChS, ClSt, MM, MuSt, SS, Sog0, Sog1, Tay}.

In  \cite{T} Thangavelu showed that the  profiles of Bochner-Riesz means convergence
for the standard Laplace operator in one dimension and   one dimensional harmonic oscillator
are essentially different. This indicates that in the theory of spectral multipliers
one has to study specific examples of operators because the results can be essential different even if
considered ambient spaces have the same topological or homogenous dimension. 

\bigskip

In this paper we consider one dimensional Schr\"odinger  type operator with anharmonic potential
\begin{equation}\label{op}
 \LL=-\frac{d^2}{dx^2}+|x|
\end{equation}
which can be precisely defined using the standard
approach of quadratic forms.
It is well-known  that this type of operator is self-adjoint and
admits a spectral resolution
$$
{\mathcal L}= \int_0^{\infty}\lambda d E_{\mathcal L}(\lambda),
$$
where the $E_\LL(\lambda)$ are spectral projectors.
For any bounded
Borel function~$F\colon [0, \infty) \to \C$, we define the
operator $F(\LL)$ by the formula
\begin{equation}\label{equw}
F(\LL)=\int_0^{\infty}F(\lambda) \wrt E_\LL(\lambda).
\end{equation}
In virtue of spectral theory the operator  $F(\LL)$ is well defined
and bounded on~$L^2(\R)$.
The operators
$\LL=-\frac{d^2}{dx^2}+|x|$ and  $\HH=-\frac{d^2}{dx^2}+x^2$ are examples of Schr\"odinger
operators with potential growing to infinity when $x$ approaches $\infty$ or $-\infty$. It is well-known that
for such operators  there exist orthonormal bases of their eigenfunctions. That is there exists a system
$\{h_n\}_{n=1}^\infty$, $h_n \in L^2 (\R)$ such that $\LL h_n=\lambda_n h_n$ and for any $f\in L^2(\R)$ we have
$\|f\|_2^2=\sum_{n=1}^\infty |\langle f, h_n\rangle| ^2$. Hence
$$
f=\sum_{n=1}^\infty h_n \langle f, h_n\rangle.
$$
The convergence in the above sum is understood in sense of $L^2(\R)$. A classical problem in harmonic
analysis is whether this series is also convergent in other $L^p(\R)$ spaces and it is one of important rationale
for developing the theory of Bochner-Riesz analysis and more general spectral multipliers. Note that now
a spectral multiplier for operator $\LL=-\frac{d^2}{dx^2}+|x|$ given by formula \eqref{equw} can be
written as
$$
F(\LL) f=\sum_{n=1}^\infty F(\lambda_n)  h_n \langle f, h_n\rangle.
$$

Spectral multiplier theorems describe
sufficient conditions for function $F$ which guarantee the operator
extends to a bounded operator acting on~$L^p$ spaces  for some
range of~$p$.

One of more interesting and significant instants of spectral multipliers are Bochner-Riesz means.
To define it we set
   \begin{equation}\label{vab1}
   \sigma^{\alpha}_R(\lambda)=
      \left\{
       \begin{array}{cl}
       (1-\lambda/R)^{\alpha}  &\mbox{for}\;\; 0\le \lambda \le R \\
       0  &\mbox{for other}\;\; \lambda . \\
       \end{array}
      \right.
   \end{equation}
We then define the operator $\sigma^{\alpha}_R(\LL)$ using
(\ref{equw}). The main problem considered in Bochner-Riesz analysis
is to find exponent $\alpha_{cr}(p)$ such that the operators
$\sigma^{\alpha}_R(\LL)$ are  bounded uniformly in $R$ on $L^p$ for
all $\alpha > \alpha_{cr}(p)$. Recall that uniform boundedness and convergence of Bochner-Riesz means are equivalent.
In addition to our discussion above  we refer readers to
\cite{CaS, H2, St, Tao} and references therein for some  further detailed
background information about Bochner-Riesz analysis and spectral multipliers.
We also want to mention that in most of the cases
full description of Bochner-Riesz profile of general differential operators or even
the standard Laplace operator is an open
problem, see \cite{ChS, SS, Sog1, Sog4}.

As we mentioned before our study is devoted to Bochner-Riesz means and spectral analysis of
particular operator $\LL=-\frac{d^2}{dx^2}+|x|$. It is motivated by
results described in \cite{AW, T}, where  combination of results
obtained by Askey, Wainger and Thangavelu provide full description
(except of the endpoints)
of  convergence of Bochner-Riesz means for the harmonic oscillator
  $\HH=-\frac{d^2}{dx^2}+x^2$ and it is one of very few examples when such full picture
  was obtained.  Also in the case of the operator $\LL$ which we consider here we obtain a complete description of the critical exponent  $\alpha_{cr}(p)$ for all $1 \le p \le \infty$.

  \medskip

One of more interesting features of our results is the fact that the
range of convergence of Bochner-Riesz means for operator $\LL$
coincides completely with the same range for harmonic oscillator. To
be more precise we note that the description of convergence of
Bochner-Riesz means which follows from
   Askey, Wainger and Thangavelu's results and which is stated in \cite[Theorem 5.5]{T} can be summarised  in the following
   way.

\begin{proposition}\label{AWT}
 Consider the operator    $\HH=-\frac{d^2}{dx^2}+x^2$. Then $\sigma^{\alpha}_R(\HH)$
  is uniformly bounded on $L^p$ if the point $(1/p,\alpha)$ belongs to regions {\rm A}
   or {\rm B}, that is if  $\alpha > \max\{0,\frac{2}{3}|\frac{1}{2}-\frac{1}{p}|-\frac{1}{6}\}$, see figure \ref{figure}.
  Next if $(1/p,\alpha)$ belongs to regions {\rm C}, that is if $\alpha < \max\{0,\frac{2}{3}|\frac{1}{2}-\frac{1}{p}|-\frac{1}{6}\}$, then
  $ \sup_{R>0} \|\sigma^{\alpha}_R(\HH)\|_{p \to p} =\infty$.
\end{proposition}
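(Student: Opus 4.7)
The plan is to exploit the explicit spectral decomposition of $\HH$ in terms of Hermite functions $\{h_n\}_{n\ge 0}$, which form an orthonormal basis of $L^2(\R)$ with eigenvalues $\lambda_n = 2n+1$. In these coordinates $\sigma_R^{\alpha}(\HH)$ acts diagonally,
\begin{equation*}
\sigma_R^{\alpha}(\HH) f \;=\; \sum_{2n+1 \le R} \bigl(1 - \tfrac{2n+1}{R}\bigr)^{\alpha} \langle f, h_n\rangle\, h_n,
\end{equation*}
with Schwartz kernel $K_R^{\alpha}(x,y) = \sum_{2n+1 \le R} (1-(2n+1)/R)^{\alpha} h_n(x) h_n(y)$. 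By self-adjointness and duality one may assume $p \ge 2$.

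For the positive direction (regions A and B) the key ingredient is the Askey--Wainger sharp estimate
\begin{equation*}
\|h_n\|_p \asymp
\begin{cases} n^{1/(2p)-1/4}, & 2 \le p < 4, \\ n^{-1/12-1/(6p)}, & 4 < p \le \infty, \end{cases}
\end{equation*}
with a logarithmic correction at $p=4$. I would decompose $\sigma_R^{\alpha}$ using a dyadic partition of unity $\sum_j \phi_j((R-\lambda)/R)$ supported on scales $2^{-j}$ away from the spectral edge; on the $j$-th annulus roughly $R\cdot 2^{-j}$ eigenvalues contribute and the piece carries an overall factor $2^{-j\alpha}$. Combining the eigenvalue count with the Askey--Wainger bound via H\"older and Plancherel gives an $L^p \to L^p$ estimate of order $2^{-j(\alpha - \alpha_{cr}(p))}$, summable in $j$ precisely when $\alpha > \alpha_{cr}(p) = \tfrac{2}{3}|\tfrac12-\tfrac1p| - \tfrac16$. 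The transition of the Askey--Wainger exponent at $p=4$ is what produces the bend of the critical line at $1/p = 1/4$ and $1/p = 3/4$.

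For the negative direction (region C) I would argue by testing on functions adapted to the turning point $|x| \sim \sqrt{R}$. Taking $f$ to be a suitably phased finite sum of $h_n$'s with $n$ in a thin window near $R/2$, one can arrange $f$ to concentrate in an Airy-type bump of width $R^{-1/6}$ at the turning point; its image under $\sigma_R^{\alpha}(\HH)$ then inherits a similar concentration, and comparing $\|\sigma_R^{\alpha}(\HH) f\|_p$ with $\|f\|_p$ using the same Askey--Wainger asymptotics shows unboundedness as $R \to \infty$ whenever $\alpha < \alpha_{cr}(p)$. Alternatively one can import this necessity from classical Jacobi-expansion bounds via the Askey--Wainger transplantation theorem.

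The main obstacle is the analysis at the turning points, where Hermite functions exhibit Airy-type transitional behaviour; this is the geometric source of the exponent $\tfrac{2}{3}$ in the critical line and distinguishes the Bochner--Riesz profile of $\HH$ from that of the one-dimensional Laplacian. Everything else in the argument reduces to careful bookkeeping against these Airy asymptotics, together with standard interpolation between $L^2$ and the endpoint at $p = \infty$ with $\alpha > 1/2$.
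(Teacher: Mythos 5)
The paper does not actually prove Proposition \ref{AWT}: it is quoted as a combination of the Askey--Wainger mean convergence theorem for Hermite expansions and Thangavelu's sharp summability results (\cite{AW}, \cite[Theorem 5.5]{T}), so a self-contained argument is necessarily a reconstruction of those papers. Your negative direction is essentially the standard one (and the one the paper itself runs for $\LL$ in Theorem \ref{verlp}, and for $\HH$ in the remark following it), except that testing $\sigma_R^{\alpha}(\HH)$ directly on a wave packet built from modes near the spectral edge does not isolate a single projection, since the multiplier is a weighted sum over all $n$ with $2n+1\le R$ and nearly vanishes exactly where you concentrate. The clean route is the subordination identity $F(\HH)=\frac{1}{\Gamma(\nu)}\int_0^\infty F^{(\nu)}(s)s^{\nu-1}\sigma^{\nu-1}_s(\HH)\,ds$ applied to a narrow bump $F_n$ at $\lambda_n=2n+1$; this converts the hypothesis $\sup_R\|\sigma_R^{\alpha}(\HH)\|_{p\to p}<\infty$ into $\|h_n\|_p\|h_n\|_{p'}\le Cn^{c\alpha}$, which the Askey--Wainger norms contradict when $\alpha<\alpha_{cr}(p)$. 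That part of your proposal is fixable.

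The positive direction has a genuine gap. The assertion that ``combining the eigenvalue count with the Askey--Wainger bound via H\"older and Plancherel gives an $L^p\to L^p$ estimate of order $2^{-j(\alpha-\alpha_{cr}(p))}$'' for the $j$-th dyadic cluster is not a proof, and its naive implementation fails: the cluster $Q_j$ contains $\sim R2^{-j}$ eigenvalues, and the triangle inequality over the individual projections only gives
$\|Q_j\|_{p\to p}\le 2^{-j\alpha}\sum_{n}\|h_n\|_p\|h_n\|_{p'}\sim 2^{-j\alpha}\,(R2^{-j})\,R^{\alpha_{cr}(p)}$,
so that summation in $j$ still leaves a divergent factor $R^{1+\alpha_{cr}(p)}$. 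Individual eigenfunction norms do not control the $L^p\to L^p$ norm of a spectral cluster; what is actually needed is either a genuine cluster/restriction estimate (sharp $L^{p'}\to L^2$ bounds for $\chi_{[\lambda,\lambda+\mu]}(\HH)$ of Koch--Tataru type, fed into the weighted-kernel interpolation machinery of \cite{CoS,DOS}), or, in the range $4/3<p<4$ where $\alpha_{cr}(p)=0$, the full Askey--Wainger mean convergence theorem itself, which is a delicate kernel estimate exploiting the Airy asymptotics near the turning points and cannot be recovered from $\|h_n\|_p$ alone. This is precisely the hard content of Thangavelu's theorem and it is the step your sketch skips. Note also that region A at $p=1,\infty$ requires a kernel bound (Gaussian heat kernel estimates plus a H\"ormander-type multiplier theorem as in \cite{DOS}), not interpolation of eigenfunction bounds.
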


Our main result is stated in Theorem \ref{main} below. As we explain  above we prefer to formulate our main result in terms of spectral theory and it is stated in the theorem below. Then to be able to compare the Bochner-Riesz
profiles of operators $\HH$ and $\LL$  we will formulate corresponding description of
Bochner-Riesz convergence for $\LL$ in Theorem~\ref{Riesz} below.

\begin{theorem}\label{main}
Suppose that   $\LL$ is an anharmonic oscillator  defined by \eqref{op} and that
 $\supp F\subset [1/2,1]$. Assume next that  $ 1\le p \le \infty $,
 $s > \max\{\frac{1}{2},\frac{2}{3}|\frac{1}{2}-\frac{1}{p}|+\frac{1}{3}\}$
 and that $F \in H^{s}$.

 Then the operators $F(t\LL)$ are uniformly bounded on space $L^p(\R)$ and
$$
\sup_{t>0} \|F(t\LL)\|_{p\to p} \le C \|F\|_{H^{s}} < \infty.
$$
\end{theorem}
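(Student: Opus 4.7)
The strategy is a multi-endpoint argument combined with direct kernel analysis. The $L^2\!\to\! L^2$ bound is trivial: by the spectral theorem $\|F(t\LL)\|_{2\to 2}=\sup_\lambda|F(t\lambda)|\le\|F\|_\infty\le C\|F\|_{H^s}$ for $s>1/2$ (via $H^{1/2+}(\R)\hookrightarrow L^\infty$ and $\supp F\subset[1/2,1]$). This already settles the range $p\in[4/3,4]$, in which $\max\{1/2,\frac{2}{3}|\tfrac{1}{2}-\tfrac{1}{p}|+\tfrac{1}{3}\}=1/2$. For the remaining range $p\in[1,4/3]$ (and $p\in[4,\infty]$ by duality), the condition reads $s>2/(3p)$; this will come from the endpoint $L^1\!\to\! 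L^1$ estimate at $s>2/3$ together with the intermediate $L^{4/3}\!\to\! L^{4/3}$ estimate at $s>1/2$, both of which are obtained from pointwise estimates on the integral kernel, since bilinear interpolation in $(s,1/p)$ between these two points reproduces exactly the line $s=2/(3p)$.

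The integral kernel of $F(t\LL)$ in the $L^2$-orthonormal eigenbasis $\{h_n\}$ (with $\LL h_n=\lambda_n h_n$) is
$$
K_t(x,y)=\sum_{n:\,t\lambda_n\in[1/2,1]}F(t\lambda_n)\,h_n(x)\,h_n(y)
$$
and contains $O(t^{-3/2})$ non-zero terms. On each half-line $-u''+|x|u=\lambda u$ reduces to Airy's equation after a translation, so the even, respectively odd, eigenfunctions are proportional to $\Ai(|x|-\lambda)$ and $\mathrm{sgn}(x)\Ai(|x|-\lambda)$, with eigenvalues satisfying $\Ai'(-\lambda_n)=0$ or $\Ai(-\lambda_n)=0$. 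Airy asymptotics then give $\lambda_n\sim n^{2/3}$, normalising constants $c_n\sim\lambda_n^{-1/4}$, and the standard tripartite pointwise bound on $h_n(y)$ distinguishing the classically allowed region $|y|\ll\lambda_n$, the transition region $|y|\sim\lambda_n$ of width $\lambda_n^{-1/2}$, and the exponentially decaying forbidden region $|y|\gg\lambda_n$.

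The $L^p$-bound on $K_t(x,\cdot)$, uniform in $x$, is obtained by splitting along these three regimes relative to the support-induced scale $|y|\sim 1/t$. The forbidden-region contribution is exponentially small; in the classically allowed region, a dyadic decomposition of $F$ combined with summation by parts in $n$ against the oscillatory Airy phase $\tfrac{2}{3}(\lambda_n-|y|)^{3/2}$ yields a bound controlled by $\|F\|_{H^s}$ for any $s>1/2$. The decisive step, and the main obstacle, is the transition region $|y|\sim 1/t$: pointwise bounds on individual $h_n$ are too weak there to reach the sharp exponent, and the restriction-type route used by Thangavelu for $\HH$ (via Hermite pointwise bounds) is unavailable because the Airy basis behaves qualitatively differently at its turning points. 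The plan is instead to rewrite the finite sum over $n$---via a Christoffel--Darboux-type identity for the Airy basis, or an equivalent resolvent-kernel representation---as a single oscillatory integral in a continuous spectral parameter, which is then controlled by integration by parts against the Sobolev regularity of $F$. This direct kernel analysis, rather than a restriction inequality, is the ``entirely new perspective'' promised in the introduction, and it produces the sharp exponent $2/3$ at $p=1$ and hence the full statement of Theorem~\ref{main}.
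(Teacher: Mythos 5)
Your skeleton --- $L^2$ trivially, an $L^1\to L^1$ endpoint at $s>2/3$, an $L^{4/3}\to L^{4/3}$ endpoint at $s>1/2$, and interpolation along $s=2/(3p)$ --- is exactly the architecture of the paper's proof, and you correctly single out the turning-point region as the source of the $1/6$ loss. But there are two genuine gaps. First, the claim that the $L^2\to L^2$ bound ``already settles'' $p\in[4/3,4]$ is false: that range requires the $L^{4/3}$ (equivalently $L^4$) endpoint at $s>1/2$, and this endpoint cannot be obtained from pointwise kernel estimates plus standard interpolation. Interpolating your $L^1$ bound at $s>2/3$ with $L^2$ gives only $s>7/12$ at $p=4/3$. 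The paper reaches $s>1/2$ at $L^{4/3}$ by decomposing the operator into pieces $Q_j$ (resp.\ $T_{k,j}$) indexed by the dyadic distance $2^j$ of the spatial variable from the turning point, estimating each piece separately in $L^1$ and $L^2$ (the factors $2^{-j/4}$ and $2^{+j/4}$ cancel exactly at the exponent $4/3$), and summing the $O(\log\lambda)$ pieces. Nothing in your proposal supplies this mechanism, so the sharp range $4/3\le p\le 4$ at $s>1/2$ is not actually established.

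Second, and more fundamentally, the device that converts the discrete spectral sum into a continuous oscillatory integral is left as a black box (``Christoffel--Darboux-type identity \dots or resolvent-kernel representation''), and that is precisely where the paper's new idea lives. The paper writes $F=G+H$ with $\tilde G=\tilde F*\psi_{\lambda^{3/2}/6}$ band-limited in the $\sqrt{\lambda}$ variable; by finite propagation speed for $\cos(t\sqrt{\LL})$ one has $G(\LL/\lambda)\delta_y=G(\A/\lambda)\delta_y$ for $|y|\ge\lambda/4$, where $\A=-d^2/dx^2+x$ is the Airy operator on the whole line, whose spectral measure is genuinely continuous with explicit kernel $\int G(\mu/\lambda)\Ai(x-\mu)\Ai(y-\mu)\,d\mu$. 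Only after this replacement do oscillatory-integral estimates become available, and even then one needs a further wavelet decomposition of $G$ into pieces $G_{k,m}$ at scales down to $\lambda^{-3/2}$ together with a two-regime kernel bound for $w(\A)\delta_y$; the error $H$ is small in $L^2$ (of size $\lambda^{-3s/2}\|F\|_{H^s}$) and is disposed of by Cauchy--Schwarz against eigenfunction bounds, which is a second place where $s\ge 2/3$ is needed at $p=1$. Without a concrete substitute for this reduction, your ``single oscillatory integral in a continuous spectral parameter'' is an unproven assertion (no Christoffel--Darboux formula for this eigenbasis is available off the shelf), and the quantitative origin of the exponent $2/3$ at $p=1$ is never exhibited. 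Note also that the region $|y|\le\lambda/4$ falls outside your trichotomy as written; the paper handles it separately by a localized Plancherel (restriction-type) estimate valid for $1\le p\le 2$.
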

The proof of Theorem \ref{main} is described in Section \ref{sec6}. However essential preparatory ingredients of the proof are described in Sections \ref{sec4} and \ref{sec5}.
Results discussed in Section \ref{sec4} are rather standard but non-trivial so we include them
for the sake of completeness. In sections \ref{sec5} and \ref{sec6} we develop essentially new techniques for handling spectral multiplier operators.
These two sections are the most significant and interesting part of the paper. We want to stress again that
surprisingly the proof is not based on restriction type estimates as it is the case in most of known
results in Bochner-Riesz analysis. 

As we mentioned above the following result which is mainly a consequence of
Theorem~\ref{main} gives a complete picture of Bochner-Riesz convergence for the operator $\LL$.

\begin{theorem}\label{Riesz}
Suppose that   $\LL$ is   defined by \eqref{op} that is   $\LL=-\frac{d^2}{dx^2}+|x|$. Then $\sigma^{\alpha}_R(\LL)$
  is uniformly bounded on $L^p$ if
  $\alpha > \max\{0,\frac{2}{3}|\frac{1}{2}-\frac{1}{p}|-\frac{1}{6}\}$, which means  the point $(1/p,\alpha)$ belongs to regions {\rm A} or
  {\rm B}.
     Moreover if  $\alpha < \max\{0,\frac{2}{3}|\frac{1}{2}-\frac{1}{p}|-\frac{1}{6}\}$, this is if $(1/p,\alpha)$ belongs to regions~{\rm C}, then
  $ \sup_R \|\sigma^{\alpha}_R(\LL)\|_{p \to p} =\infty$.
\end{theorem}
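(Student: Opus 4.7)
The positive direction of Theorem~\ref{Riesz} reduces to Theorem~\ref{main} by a dyadic decomposition of the Bochner-Riesz multiplier near its singular point $\lambda = R$. After the rescaling $\lambda \mapsto \lambda/R$ it suffices to bound $\sigma^{\alpha}_{1}(\LL)$ uniformly. Pick $\eta \in C_c^{\infty}((1/2, 2))$ with $\sum_{k\ge 0} \eta(2^k u) = 1$ for $u \in (0, 1/2]$, and set $F_k(\lambda) := \eta(2^k(1-\lambda))(1-\lambda)_+^\alpha$ for $k \geq 1$, together with a smooth remainder $F_0 \in C_c^\infty([0, 3/4])$ handling $\lambda$ far from the endpoint. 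For $k \geq 2$ we have $\supp F_k \subset [1/2, 1]$. Writing $F_k(\lambda) = 2^{-k\alpha}\chi(2^k(1-\lambda))$ with $\chi(u) := \eta(u) u^\alpha$ smooth and compactly supported, a direct Fourier-side computation yields
\[
\|F_k\|_{H^s} \le C\, 2^{k(s - 1/2 - \alpha)}.
\]
Applying Theorem~\ref{main} to each $F_k$ and summing the resulting geometric series, $\sup_R \|\sigma^\alpha_R(\LL)\|_{p\to p} < \infty$ whenever $\alpha > s - 1/2$; choosing $s$ just above $\max\{1/2,\, \tfrac23|\tfrac12-\tfrac1p|+\tfrac13\}$ gives exactly the threshold $\alpha > \max\{0,\, \tfrac23|\tfrac12 - \tfrac1p| - \tfrac16\}$. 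The smooth piece $F_0$ is absorbed by a parallel dyadic decomposition at small $\lambda$ (rescaling each shell into $[1/2, 1]$ before invoking Theorem~\ref{main}).

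For the unboundedness when $\alpha < \max\{0,\, \tfrac23|\tfrac12-\tfrac1p|-\tfrac16\}$, I would transplant the Askey-Wainger-Thangavelu counterexample behind Proposition~\ref{AWT} to $\LL$. The driving feature of the critical exponent is the Airy-type concentration of eigenfunctions at the classical turning point. For $\LL$ the solutions of $-u''+|x|u = \lambda u$ are, up to $L^2$ normalisation and matching at $0$, given by $\Ai(|x|-\lambda_n)$; these concentrate in a strip of width $\sim \lambda_n^{-1/2}$ around $|x|\sim \lambda_n$ and exhibit the same universal Airy profile as the Hermite functions modulo an affine rescaling. A Knapp-type superposition of eigenfunctions whose eigenvalues lie in a narrow window $[R-\delta, R]$, evaluated against $\sigma^\alpha_R(\LL)$, then yields a sequence of test functions violating the $L^p\to L^p$ bound below the critical exponent, exactly as in Thangavelu's treatment of $\HH$.

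The positive direction is a routine bookkeeping exercise once Theorem~\ref{main} is available. The real obstacle is the lower bound, which requires making precise the Airy-type asymptotics of eigenfunctions of $\LL$ at the turning point; this is exactly where the semiclassical analysis developed in Section~\ref{sec4} enters. Once the sharp eigenfunction $L^p$ bounds (equivalently, sharp spectral-cluster bounds) for $\LL$ are in hand, the Knapp construction transfers unchanged from the harmonic-oscillator setting, and the same critical exponent emerges.
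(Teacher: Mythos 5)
Your positive direction is essentially the paper's argument in a slightly more laborious form: the paper splits $\sigma_1^\alpha$ by a single cutoff into a piece supported in $[1/2,1]$, observes that $\sigma^\alpha\in H^s$ precisely when $s<\alpha+1/2$, and applies Theorem~\ref{main} once; your dyadic decomposition near $\lambda=1$ with $\|F_k\|_{H^s}\lesssim 2^{k(s-1/2-\alpha)}$ re-derives that membership and is correct. However, your treatment of the smooth piece $F_0$ fails as stated: for a function that is smooth and nonvanishing near $\lambda=0$, the rescaled dyadic shells have $H^s$ norms bounded \emph{below} uniformly in $k$, so summing the shell-by-shell bounds from Theorem~\ref{main} diverges. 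The paper instead handles this piece by Proposition~\ref{th3.1}, or by the Gaussian-heat-kernel multiplier theorem of \cite{DOS}; some input of that H\"ormander/Calder\'on--Zygmund type is genuinely needed there and cannot be replaced by naive shell summation.

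The substantive gap is in the negative direction. You reduce it to ``transplant the Askey--Wainger--Thangavelu counterexample,'' but you never supply the mechanism that converts a lower bound for $\|\phi_n\|_{p}\|\phi_n\|_{p'}$ into a lower bound for $\sup_R\|\sigma_R^\alpha(\LL)\|_{p\to p}$, and the Knapp-type construction you gesture at does not obviously provide one: differencing $\sigma_R^\alpha(\LL)-\sigma_{R'}^\alpha(\LL)$ with $R,R'$ sandwiching $\lambda_n$ does not isolate the projector onto $\phi_n$, since the coefficients of all lower eigenvalues change as well. The paper's Theorem~\ref{verlp} supplies the missing mechanism via a subordination identity: writing $F=F^{(\nu)}*\delta_-^{-\nu}$ with Weyl fractional derivatives yields $\|F(\LL)\|_{p\to p}\le C\sup_R\|\sigma_R^\nu(\LL)\|_{p\to p}\int_0^\infty|F^{(\nu+1)}(s)|s^\nu\,ds$; one then takes $F_n(\lambda)=\eta(\sqrt{\lambda_{n+1}}(\lambda-\lambda_n))$, which by the spectral gap estimate \eqref{e2.4} equals the rank-one projector onto $\phi_n$, satisfies $\int|F_n^{(\alpha+1)}(s)|s^\alpha\,ds\lesssim\lambda_n^{3\alpha/2}$, and has $\|F_n(\LL)\|_{p\to p}=\|\phi_n\|_p\|\phi_n\|_{p'}\gtrsim\lambda_n^{-1/4}\lambda_n^{1/p-1/2}$ by Lemma~\ref{le2.3}. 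Comparing exponents gives the sharp necessary condition. Note also that the eigenfunction $L^p$ asymptotics you invoke are already established in Lemma~\ref{le2.3} (Section~\ref{sec2}), not in Section~\ref{sec4}, and the paper explicitly remarks that its method differs from Thangavelu's, so ``transfers unchanged from the harmonic-oscillator setting'' is an assertion, not an argument.
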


The positive part of Theorem \ref{Riesz} is a rather straightforward consequence of
Theorem \ref{main} and the implication essentially  boils down to the fact that
$\sigma^{\alpha} \in H^{s}$ if and only if $\alpha +1/2 > s$. The negative part
essentially follows from our discussion in Section \ref{sec3} and Theorem~\ref{verlp} below.
We  conclude the proof of
Theorem \ref{Riesz} at the end of   Section \ref{sec3}.

 \begin{remark}
 We want to point out that Theorem \ref{Riesz} follows from Theorem~\ref{main} but
Theorem~\ref{main} is (at least formally) essentially stronger that Theorem \ref{Riesz}, see the discussion in
Section \ref{sec7}. The question whether the operator $\LL$ can be replaced by the harmonic oscillator  $\HH$
in the statement of Theorem~\ref{main} is an open problem.
\end{remark}

\begin{remark}
The endpoint  convergence of Bochner-Riesz means for operator $\LL$ remains an  open
question
except for $p=4$ and $p=4/3$. That is we do not know if $\sigma^{\alpha}_R(\LL)$ is uniformly bounded on $L^p$ for the critical exponent $\alpha = \max\{0,\frac{2}{3}|\frac{1}{2}-\frac{1}{p}|-\frac{1}{6}\}$. However it follows from the necessary condition described in
Section \ref{sec3} that  $\|\sigma^{0}_R(\LL)\|_{4 \to 4}$ is not uniformly bounded.
\end{remark}

The following picture describes  the convergence of Bochner-Riesz means for operators
$\LL$ and $\HH$. Note that the means are convergent in both regions $A$ and $B$.
The range $A$ is common for all abstract operators in dimension 1, for which the corresponding semigroups and heat kernels satisfies Gaussian bounds, see \cite{DOS}.
However the division between the parts $B$ (convergent)  and $C$ (divergent)  possibly  depends on the operator. Indeed in case of the standard
Laplace operator on $\R$ or on one dimensional torus  Bochner-Riesz means
converge in both regions $B$ and $C$ whereas for considered operators $\LL$ and $\HH$
the means are uniformly bounded  only in $B$ and they are not convergent in  part~$C$.
To sum up the region $A$ is completely understood for all abstract operators in dimension 1
whereas the division between regions  $B$ (convergent)  and $C$ (divergent) is not known for most
operators with exception of the standard Laplace operator, harmonic oscillator $\HH$ and
now also an operator $\LL$.

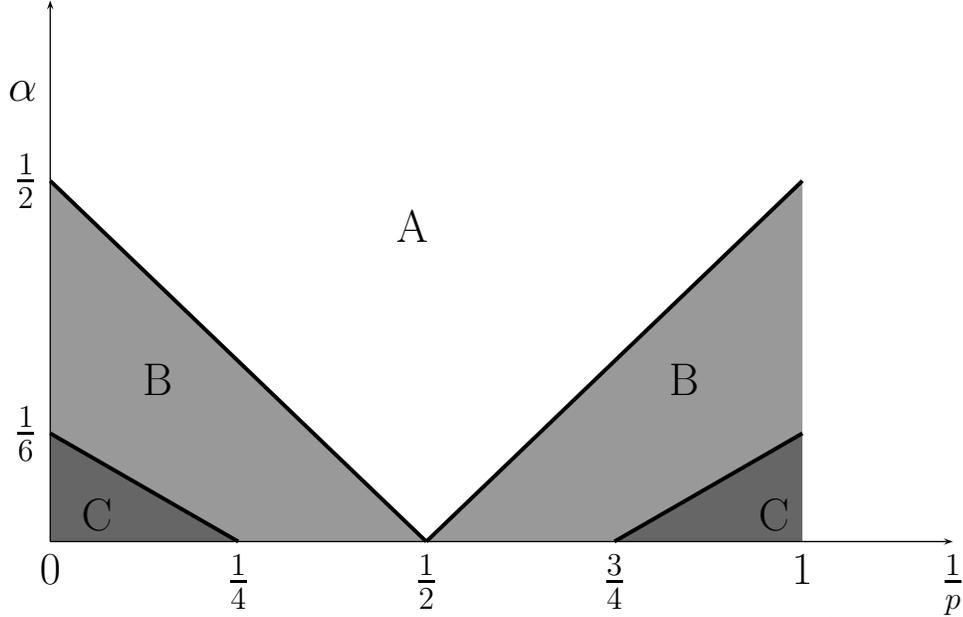
\begin{figure}[H]\label{figure}
\vspace{-6cm}
\begin{pspicture}(-1.5,14)(12,5)
    \psset{xunit=10cm, yunit=2.4cm}
     \newgray{gray00}{.9}
    \newgray{gray0}{.84}
    \newgray{gray1}{.6}
    \newgray{gray2}{.6}
    \newgray{gray3}{.4}
    \pspolygon[linestyle=none,fillstyle=solid,fillcolor=gray3](.25,0)(0,0.6)(0,0)
    \pspolygon[linestyle=none,fillstyle=solid,fillcolor=gray1](.25,0)(0,0.6)(0,2)(0.5,0)
   \pspolygon[linestyle=none,fillstyle=solid,fillcolor=gray1](.75,0)(1,0.6)(1,2)(0.5,0)
    \pspolygon[linestyle=none,fillstyle=solid,fillcolor=gray3](.75,0)(1,.6)(1,0)
    \psline[linewidth=.5pt]{->}(0,0)(1.2,0)
    \uput[d](1.2,0){\Large$\frac{1}{p}$}
    \psline[linewidth=.5pt]{->}(0,0)(0,3)
    \uput[l](0,2.5){\Large $\alpha$}
    \psline[linewidth=1.5pt](.5,0)(0,2)
    \psline[linewidth=1.5pt](.5,0)(1,2)
\psline[linewidth=1.5pt](.75,0)(1,.6)
\psline[linewidth=1.5pt](.25,0)(0,.6)
    \uput[l](0,2){\Large$\frac{1}{2}$}
    \uput[l](0.52,1.75){\Large A}
    \uput[l](0.18,0.9){\Large B}
    \uput[l](0.88,0.9){\Large B}
    \uput[l](1,0.15){\Large C}
    \uput[l](0.1,0.15){\Large C}
    \uput[l](0,0.6){\Large$\frac{1}{6}$}
    \uput[d](.5,0){\Large$\frac{1}{2}$}
      \uput[d](0,0){\Large$0$}
      \uput[d](1,0){\Large$1$}
    \uput[d](.75,0){\Large$\frac{3}{4}$}
    \uput[d](.25,0){\Large$\frac{1}{4}$}

\end{pspicture}
\vspace{6cm} \caption{Convergence of Bochner-Riesz means for operators
$-\frac{d^2}{dx^2}+|x|$ and harmonic oscillator $-\frac{d^2}{dx^2}+x^2$.
Note that for both operators the convergence in region $A$ follows from more general results which just required Gaussian upperbounds  for the corresponding semigroups, see \cite{DOS}.
  }
 \end{figure}

Through out of the paper, $W_s^p$ denotes the Soblev space
defined by the norm $\|F\|_{W_s^p}=\|(Id-\frac{d^2}{dx^2})^{s/2}F\|_{L^p}$.
Next for $p=2$ we set $W_s^p=H^s$.
$f\sim w$ means that there exist positive constants $C_1, C_2$ such
that $C_1w\leq f\leq C_2w $.

Plan of the paper. In Section \ref{sec2} we give basic description of
eigenfunction expansion of the operator $\LL=-\frac{d^2}{dx^2}+|x|$ which is based on
Airy function.
Next in Section \ref{sec4} we describe some general spectral multiplier results
which are required in the proof of the main result. In Section~\ref{sec5} we discuss in details 
properties of the Airy operator and function.
The proof of the main result that is Theorem~\ref{main} is concluded in  Section \ref{sec6}.
Next in Section \ref{sec3}  we discuss the necessary condition for convergence of Bochner-Riesz means.

\section{Eigenfunction expansion of the operator $\LL$}\label{sec2}
\setcounter{equation}{0}

We start our discussion with a precise description of the spectral
decomposition of the operator $\LL$ based on the results described in \cite{Ga}.

\medskip

We recall that the Airy function, which we denote by $\Ai$ is defined as the inverse Fourier
transform of the function $\xi \to \exp(i\xi^3/3)$, see  \cite[Definition 7.6.8, Page 213]{Ho1}.
In the sequel we will need  the following  properties of spectral decomposition of operator
$\LL$  which are proved in Section 2 of
\cite{Ga}.
\begin{proposition}\label{prop1}
Suppose that the operator $\LL$ acting on $L^2(\R)$ is defined by  formula
\eqref{op}.   Then its spectral decomposition satisfies the  following
properties:

\begin{enumerate}
\item The operator $\LL$ has only a pointwise spectrum and its eigenvalues  belong to
$(1,\infty)$. In particular  the first eigenvalue is larger than 1.

\item Every eigenvalue of $\LL$ is simple and the only point of
accumulation of the eigenvalue sequence is $\infty$.

\item The spectrum of $\LL$ is described by the following formula
$$
\{\lambda\in \R: \Ai(-\lambda)=0 \ \mbox{or} \
\Ai'(-\lambda)=0\}.
$$
Moreover, the normalized eigenfunction $\phi_n$ corresponding to
the eigenvalue $\lambda_n$ can be described as
\begin{equation}\label{e2.1}
   \phi_n(u)=
      \left\{
       \begin{array}{cl}
       A_n\Ai(u-\lambda_n)  &\mbox{for}\;\; u\geq 0 \\
       (-1)^{n+1}A_n\Ai(-u-\lambda_n)  &\mbox{for}\;\; u\leq 0 \\
       \end{array}
      \right.
   \end{equation}
where
\begin{eqnarray}\label {e2.2}
A_n=\left(2\int_{-\lambda_n}^\infty|\Ai(u)|^2du\right)^{-1/2}.
\end{eqnarray}

\item The eigenvalues $\lambda_n$  behave asymptotically in the following way
\begin{eqnarray}\label{e2.3}
\lim_{n\to \infty} \lambda_n \left(\frac{3\pi}{4}n\right)^{-2/3} =1
\end{eqnarray}
and
\begin{eqnarray}\label{e2.4}
\frac{\pi}{2}\lambda_{n+1}^{-1/2}\leq \lambda_{n+1}-\lambda_n\leq
\frac{\pi}{2}\lambda_{n}^{-1/2}
\end{eqnarray}
for all $n=1, 2, \ldots$
\end{enumerate}
\end{proposition}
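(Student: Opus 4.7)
The plan is to treat $\LL$ as the Friedrichs extension of the symmetric operator $-d^2/dx^2 + |x|$ on $C_c^\infty(\R)$. Since $V(x)=|x|$ is continuous and tends to $+\infty$ as $|x|\to\infty$, a standard argument (Persson's lemma together with locally compact embedding of the form domain into $L^2$) shows that $\LL$ has compact resolvent, hence purely discrete spectrum accumulating only at $+\infty$. Non-negativity of $V$ gives $\LL \geq 0$, so the spectrum lies in $[0,\infty)$; the sharper bound $\lambda > 1$ will fall out of the eigenvalue equation in the next step. This disposes of the qualitative assertions in parts (1) and (2) modulo the explicit lower bound.

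Next I would solve the eigenvalue ODE explicitly. On $x>0$ the equation $-\phi''+x\phi=\lambda\phi$ becomes $\phi''(x)=(x-\lambda)\phi(x)$, whose general solution is a linear combination of $\Ai(x-\lambda)$ and $\Bi(x-\lambda)$; since $\Bi$ grows exponentially at $+\infty$ the $L^2$ condition forces the solution to be a multiple of $\Ai(x-\lambda)$. Symmetrically, on $x<0$ the solution must be a multiple of $\Ai(-x-\lambda)$. Because $V$ is locally bounded, any eigenfunction lies in $H^2(\R)\subset C^1(\R)$, so $\phi$ and $\phi'$ must match at $0$. Writing $\phi(x)=A\,\Ai(x-\lambda)$ for $x\geq 0$ and $\phi(x)=B\,\Ai(-x-\lambda)$ for $x\leq 0$, continuity gives $A\,\Ai(-\lambda)=B\,\Ai(-\lambda)$ and $C^1$ matching gives $A\,\Ai'(-\lambda)=-B\,\Ai'(-\lambda)$. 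Nontrivial solutions therefore require $\Ai(-\lambda)=0$ (forcing $B=-A$, yielding an odd eigenfunction) or $\Ai'(-\lambda)=0$ (forcing $B=A$, yielding an even eigenfunction), which is exactly the spectral characterization in (3) and also delivers the form \eqref{e2.1}.

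For simplicity I would note that $\Ai$ satisfies $\Ai''(s)=s\,\Ai(s)$, so a common zero of $\Ai$ and $\Ai'$ at some $s=-\lambda$ would, by uniqueness for the Airy equation, force $\Ai\equiv 0$, a contradiction. Hence the two families of eigenvalues are disjoint, each eigenspace is one-dimensional, and every eigenvalue is simple. The lower bound $\lambda>1$ then amounts to the classical fact that every zero of $\Ai$ and of $\Ai'$ on the real line is negative, with the first zero of $\Ai'$ being approximately $-1.018$; this can be verified by the energy identity for Airy functions, showing $\Ai>0$ and $\Ai'<0$ on $[0,\infty)$ together with a quantitative estimate $|\Ai'(0)/\Ai(0)|$ strictly smaller than what would allow a zero of $\Ai'$ in $(-1,0]$. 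The normalization constant $A_n$ in \eqref{e2.2} follows directly by computing $\|\phi_n\|_2^2$ using the symmetry of $\phi_n$ and a change of variables.

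Finally for (4), the asymptotics reduce to the well-known large-index asymptotics of the zeros of the Airy function and its derivative: the $k$-th negative zero $\alpha_k$ of $\Ai$ satisfies $\alpha_k = -(3\pi(4k-1)/8)^{2/3}(1+O(k^{-2}))$ and the $k$-th negative zero $\alpha_k'$ of $\Ai'$ satisfies $\alpha_k' = -(3\pi(4k-3)/8)^{2/3}(1+O(k^{-2}))$ (see, e.g., Olver, Asymptotics and Special Functions). Interleaving the two sequences and relabeling gives $\lambda_n\sim(3\pi n/4)^{2/3}$, which is \eqref{e2.3}. The spacing bound \eqref{e2.4} I would obtain from the Sturm-type identity: if $s_1<s_2$ are consecutive zeros of $\Ai$ or of $\Ai'$, write the Airy equation in Prüfer form and use $\int_{s_1}^{s_2}\sqrt{-s}\,ds \sim \pi$ for sufficiently large $|s_1|$, then translate $s=-\mu$ to get $\mu^{1/2}(\lambda_{n+1}-\lambda_n)\sim \pi/2$, with one-sided bounds following from monotonicity of the integrand; the uniform validity for all $n\geq 1$ requires a small extra direct check for the first few zeros. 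The main technical obstacle is controlling \eqref{e2.4} uniformly in $n$ (not only asymptotically), since one must carefully combine the zeros of $\Ai$ and those of $\Ai'$ into a single monotone sequence and verify the interlacing-plus-spacing estimate in both cases.
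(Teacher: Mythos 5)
Your argument is correct in outline, but it is worth noting that the paper does not actually prove Proposition~\ref{prop1}: it simply declares it a reformulation of Proposition 2.1, Corollary 2.2, Facts 2.3, 2.7 and Theorem 2.6 of Gadzi\'nski \cite{Ga}, with the eigenvalue asymptotics deferred to Fabijonas--Olver \cite{FO}. What you supply is the direct derivation that those references carry out: compact resolvent from $V(x)=|x|\to\infty$, explicit solution of the eigenvalue ODE by $\Ai(x-\lambda)$ on $x>0$ and $\Ai(-x-\lambda)$ on $x<0$ (discarding $\Bi$ by the $L^2$ condition), and the $C^1$ matching at the origin giving $(A-B)\Ai(-\lambda)=0$, $(A+B)\Ai'(-\lambda)=0$, hence the dichotomy $\Ai(-\lambda)=0$ (odd eigenfunction) or $\Ai'(-\lambda)=0$ (even eigenfunction), with simplicity following from the fact that $\Ai$ and $\Ai'$ have no common zero. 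This is all sound, and your route has the virtue of making the paper self-contained where the paper merely cites.

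Two points in your sketch deserve sharpening. First, the bound $\lambda_1>1$ is not a soft qualitative fact: it is exactly the statement that the first (negative) zero of $\Ai'$ lies below $-1$ (it is $\approx -1.0188$), and your proposed verification via ``$|\Ai'(0)/\Ai(0)|$ small enough'' should be made explicit or replaced by a citation to a table of Airy zeros. Second, and more importantly, the two-sided inequality \eqref{e2.4} is claimed for \emph{all} $n$, whereas your Pr\"ufer computation with $\int_{s_1}^{s_2}\sqrt{-s}\,ds\sim\pi/2$ is only asymptotic and would not yield the exact constants. The clean fix is a genuine Sturm-type comparison: since the zeros of $\Ai$ and $\Ai'$ strictly interlace (Rolle, plus the fact that $\Ai''=s\Ai$ forces $\Ai'$ to be monotone between consecutive zeros of $\Ai$ in $s<0$), consecutive $\lambda_n<\lambda_{n+1}$ always correspond to a zero of $y$ adjacent to a zero of $y'$ for the equation $y''+(-s)y=0$ on an interval where the coefficient $-s$ ranges exactly over $[\lambda_n,\lambda_{n+1}]$; the constant-frequency Pr\"ufer substitution $y=r\omega^{-1/2}\sin\theta$, $y'=r\omega^{1/2}\cos\theta$ with $\omega=\lambda_{n+1}^{1/2}$ (resp.\ $\omega=\lambda_n^{1/2}$) bounds the phase speed above (resp.\ below) by $\omega$, and the phase increment $\pi/2$ between a zero of $y$ and the adjacent zero of $y'$ then gives precisely $\frac{\pi}{2}\lambda_{n+1}^{-1/2}\le\lambda_{n+1}-\lambda_n\le\frac{\pi}{2}\lambda_n^{-1/2}$ with no exceptional small-$n$ cases to check.
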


\begin{proof}
Proposition~\ref{prop1} is just reformulation of Proposition 2.1,
Corollary 2.2,  Facts 2.3, 2.7 and Theorem 2.6 of \cite{Ga}. The complete asymptotic of
eigenvalues $\lambda_n$ including more precise version of
relations \eqref{e2.3} and \eqref{e2.4}  is described in \cite{FO}.
\end{proof}

\medskip

In what follows we also need the following lemma.

\begin{lemma}\label {le2.3}
Let ${\phi}_n$ is the normalized eigenfunction corresponding to the
eigenvalue $\lambda_n$  defined in \eqref{e2.1}. Then
\begin{eqnarray}\label{eq2.30}
 |{\phi}_n(u)|\leq
      \left\{
       \begin{array}{cl}
       C\lambda_n^{-\frac{1}{4}}\Big(\big||u|-\lambda_n\big|+1\Big)^{-\frac{1}{4}},  &\mbox{for}\;\; |u|\leq
       \lambda_n \\
       C\lambda_n^{-\frac{1}{4}}\exp\big(-\frac{2}{3}\big||u|-\lambda_n\big|^{\frac{3}{2}}\big), &\mbox{for}\;\; |u|>
       \lambda_n.
              \end{array}
      \right.
\end{eqnarray}
In addition
  \begin{equation}\label{eq2.31}
   \|{\phi}_n\|_{L^p}\sim
      \left\{
       \begin{array}{cl}
       \lambda_n^{\frac{1}{p}-\frac{1}{2}},  &\mbox{for}\;\; 1\leq p< 4, \\
       \lambda_n^{-\frac{1}{4}}(\ln
\lambda_n)^{\frac{1}{4}}, &\mbox{for}\;\; p=4, \\
       \lambda_n^{-\frac{1}{4}},  &\mbox{for}\;\; p>4, \\
       \end{array}
      \right.
   \end{equation}
where $f\sim w$ means that there exist positive constants $C_1, C_2$
such that $C_1w\leq f\leq C_2w $.

\end{lemma}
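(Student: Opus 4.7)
The plan is to reduce everything to the classical asymptotics of the Airy function,
\[
\Ai(y) \sim \tfrac{1}{2\sqrt{\pi}}\, y^{-1/4} \exp\!\bigl(-\tfrac{2}{3}\, y^{3/2}\bigr) \ \text{as}\ y \to +\infty,
\qquad
\Ai(-s) \sim \tfrac{1}{\sqrt{\pi}}\, s^{-1/4} \sin\!\bigl(\tfrac{2}{3}\, s^{3/2} + \tfrac{\pi}{4}\bigr) \ \text{as}\ s \to +\infty,
\]
together with continuity on compact sets. These combine into the uniform envelope $|\Ai(y)| \lesssim (1+|y|)^{-1/4}$ for $y \leq 0$ and $|\Ai(y)| \lesssim (1+y)^{-1/4} e^{-\frac{2}{3} y^{3/2}}$ for $y \geq 0$. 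Since \eqref{e2.1} yields $|\phi_n(u)| = A_n\, |\Ai(|u|-\lambda_n)|$, the pointwise bound \eqref{eq2.30} will follow as soon as we know $A_n \sim \lambda_n^{-1/4}$; the factor $(1+(|u|-\lambda_n))^{-1/4}$ in the exponentially decaying range is harmlessly absorbed into the constant.

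To pin down $A_n$, observe that $\int_0^\infty |\Ai(u)|^2\,du = O(1)$, so \eqref{e2.2} reduces to $A_n^{-2} \sim \int_{-\lambda_n}^{-1} |\Ai(u)|^2\, du + O(1)$. Substituting the oscillatory asymptotic and using $\sin^2\theta = \tfrac{1}{2} - \tfrac{1}{2}\cos 2\theta$, the non-oscillatory half gives $\tfrac{1}{2\pi}\int_1^{\lambda_n} s^{-1/2}\, ds \sim \tfrac{1}{\pi}\, \lambda_n^{1/2}$, while the oscillatory half is $O(1)$ uniformly in $n$ by a single integration by parts (the derivative of the phase $\tfrac{4}{3} s^{3/2}$ is of order $s^{1/2}$, which is increasing). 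Hence $A_n^{-2} \sim \lambda_n^{1/2}$, as required.

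For \eqref{eq2.31}, the symmetry $|\phi_n(-u)| = |\phi_n(u)|$ gives $\|\phi_n\|_p^p = 2 A_n^p \int_0^\infty |\Ai(u-\lambda_n)|^p\, du$. I split this integral into the oscillatory piece on $[0, \lambda_n-1]$, the transition zone $[\lambda_n-1,\lambda_n+1]$ (which is $O(1)$ since $\Ai$ is bounded there), and the exponential tail on $[\lambda_n+1,\infty)$ (also $O(1)$). The oscillatory piece is
\[
\int_1^{\lambda_n} |\Ai(-s)|^p\, ds \sim \int_1^{\lambda_n} s^{-p/4}\, \bigl|\sin\bigl(\tfrac{2}{3}\, s^{3/2} + \tfrac{\pi}{4}\bigr)\bigr|^p\, ds,
\]
which is of order $\lambda_n^{1-p/4}$ for $1 \leq p < 4$, of order $\log \lambda_n$ for $p=4$, and of order $1$ for $p>4$. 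Multiplying by $A_n^p \sim \lambda_n^{-p/4}$ and taking the $p$-th root produces the three cases of \eqref{eq2.31}.

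The main technical obstacle is securing the \emph{lower} bound matching each of these estimates: one must show that the rapid oscillation of $\sin(\tfrac{2}{3}\, s^{3/2} + \tfrac{\pi}{4})$ does not destroy the size of the integral. I would handle this by partitioning $[1,\lambda_n]$ into arcs on which the phase varies by less than $\pi/3$; on a positive-density subfamily of these arcs $|\sin|$ stays bounded below by an absolute constant, and the weight $s^{-p/4}$ is essentially constant on each arc, so summing over that subfamily recovers a lower bound of the asserted order in all three regimes. Once this averaging is in place the two-sided equivalence $\sim$ in \eqref{eq2.31} is complete.
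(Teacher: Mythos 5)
Your proposal is correct and follows essentially the same route as the paper: the pointwise bound and the normalization $A_n\sim\lambda_n^{-1/4}$ come from the standard Airy asymptotics applied to $A_n^{-2}=2\int_{-\lambda_n}^{\infty}|\Ai|^2$, and the $L^p$ norms from the same three-way split of $\int_{-\lambda_n}^{\infty}|\Ai|^p$. The only difference is that you supply the averaging argument justifying the two-sided bound on the oscillatory integral, a point the paper's proof asserts without detail.
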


\begin{proof} It is well known that the Airy function $\Ai$ is bounded. In the proof, we also need the following
estimates for $\Ai$:

\noindent There exists a constant $C$ such that  for all $u>0$
\begin{eqnarray}\label{e2.5}
\big|\Ai(u)\big|\le C \exp(-2u^{\frac{3}{2}}/3)u^{-\frac{1}{4}};
\end{eqnarray}
In addition for  all $u<0$ the asymptotic
behaviour of the Airy function  as $u$ goes to minus infinity can be described in
the following way
\begin{eqnarray}\label{e2.6}
\Ai(u)=
(\pi)^{-\frac{1}{2}}|u|^{-\frac{1}{4}}\left(\sin(\frac{2}{3}|u|^{\frac{3}{2}}+\frac{\pi}{4})+O(|u|^{-\frac{3}{2}})\right),
\end{eqnarray}
 \cite[(7.6.20) and (7.6.21), Page 215]{Ho1}.

Next by (\ref{e2.2})
\begin{eqnarray}
A_n^{-2}&=&2\int_{-\lambda_n}^\infty|\Ai(u)|^2du\nonumber\\
&\sim&\int_{-\lambda_n}^{-1}|\Ai(u)|^2du+\int_{-1}^\infty|\Ai(u)|^2du\nonumber.
\end{eqnarray}
The Airy function is smooth and bounded  so by  (\ref{e2.5})
$$
\int_{-1}^\infty|\Ai(u)|^2du \le C < \infty
$$
and
$$
\int_{-\lambda_n}^{-1}|\Ai(u)|^2du\sim \int_1^{\lambda_n}
u^{-\frac{1}{2}}du \sim \lambda_n^{\frac{1}{2}}.
$$
Hence  $A_n\sim \lambda_n^{-1/4}$ so \eqref{eq2.30} follows from
\eqref{e2.1}, (\ref{e2.5}) and (\ref{e2.6}).

\medskip
Alternatively note that $\lambda_n{\phi}_n(0)^2+{\phi}'_n(0)^2=1$ so
$$
\lambda_n A_n^2|\Ai(-\lambda_n)|^2+A_n^2|\Ai'(-\lambda_n)|^2=1.
$$
Hence $A_n\sim \lambda_n^{-1/4}$  by assymptotics \eqref{e2.6} and similar assymptotics for
derivative of the Airy function described in Proposition~\ref{prop7.1} below.
\medskip

\noindent Now by  (\ref{e2.1}),
\begin{eqnarray}\label {e2.7}
\|{\phi}_n\|^p_{L^p}&=&\int_0^\infty|{\phi}_n(u)|^pdu+\int_{-\infty}^0|{\phi}_n(u)|^pdu\nonumber\\
&=&2A_n^p\int_0^\infty|\Ai(u-\lambda_n)|^pdu\nonumber\\
&=&2A_n^p\left(\int_{-\lambda_n}^{-1}|\Ai(u)|^pdu+\int_{-1}^\infty|\Ai(u)|^pdu\right).
\end{eqnarray}
The Airy function is smooth and bounded so  by  (\ref{e2.5}),
\begin{eqnarray}\label {e2.8}
\int_{-1}^\infty|\Ai(u)|^pdu\leq C<0.
\end{eqnarray}
Then by  (\ref{e2.6}), for $1\leq p<4$
$$
\int_{-\lambda_n}^{-1}|\Ai(u)|^pdu\sim
\int_{1}^{\lambda_n}u^{-\frac{p}{4}}du\sim
\lambda_n^{1-\frac{p}{4}},
$$
for $p=4$
$$
\int_{-\lambda_n}^{-1}|\Ai(u)|^pdu\sim
\int_{1}^{\lambda_n}u^{-1}du\sim \ln\lambda_n,
$$
and for $p>4$
$$
 \int_{-\lambda_n}^{-1}|\Ai(u)|^pdu\sim
\int_{1}^{\lambda_n}u^{-\frac{p}{4}}du\sim C.
$$
Now Lemma~\ref{le2.3} follows from (\ref{e2.7}), (\ref{e2.8}) and
the estimates for $A_n$.
 \end{proof}

\section {Spectral multiplier theorems for abstract self-adjoint operators}\label{sec4}
The aim of this section is to prove two auxiliary results - Lemma~\ref{2I} and
Proposition~\ref{I/4}  which we use in  the proofs of
Theorems \ref{main} and \ref{Riesz}.

Set  $I_\lambda  = [-\lambda, \lambda]$. Let $\chi_{I_\lambda}$ be the characteristic function of interval $I_\lambda$ we denote
by $I_\lambda$ also  a projection acting on $L^p(\R)$ spaces defined by
$$
I_\lambda f(x)= \chi_{I_\lambda}f(x)
$$
for any $f \in L^p(\R)$. Similarly we set  $I^c_\lambda f(x)= \chi_{I^c_\lambda}f(x)=(1-\chi_{I_\lambda})f(x)$.

We first observe that  if $\supp F \subset [1/2,1]$ then  it is enough to estimate $F(\LL/\lambda)$
on the interval $I_{2\lambda}$ and that the part of multiplier $F(\LL/\lambda)$ outside  $I_{2\lambda}$ is negligible. More precisely we show that

\begin{lemma}\label{2I}
Suppose that $\LL$ is an anharmonic oscillator defined by \eqref{op} and that $F\colon \R \to \R$ is
a continuous function such that $\supp F \subset [1/2,1]$. Then for any  $1 \le p \le \infty$
$$
\|F(\LL/\lambda)I^c_{2\lambda}\|_{p\to p } \leq
C\|F\|_{\infty}
$$
for all $\lambda >0$.
\end{lemma}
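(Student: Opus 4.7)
My strategy is to compute the integral kernel of $F(\LL/\lambda)I^c_{2\lambda}$ and bound both of its marginals via Schur's test. Writing $F(\LL/\lambda) = \sum_n F(\lambda_n/\lambda)\phi_n\langle \cdot,\phi_n\rangle$, the composition has kernel
\[
K_\lambda(x,y) = \chi_{\{|y|>2\lambda\}}(y)\sum_{\lambda_n \in [\lambda/2,\lambda]} F(\lambda_n/\lambda)\,\phi_n(x)\,\phi_n(y),
\]
the sum being restricted to $[\lambda/2,\lambda]$ because $\supp F\subset[1/2,1]$. Since $\lambda_1>1$ by Proposition~\ref{prop1}, the sum is empty whenever $\lambda<\lambda_1$, so I may assume $\lambda\ge \lambda_1>1$. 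Once I establish
\[
\sup_x \int |K_\lambda(x,y)|\,dy \le C\|F\|_\infty \quad\text{and}\quad \sup_y \int |K_\lambda(x,y)|\,dx \le C\|F\|_\infty
\]
uniformly in $\lambda$, the resulting $L^\infty\to L^\infty$ and $L^1\to L^1$ bounds will cover every $p\in[1,\infty]$ by Riesz--Thorin.

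The decisive input is the super-exponential decay of $\phi_n$ outside the classical region. Whenever $|y|>2\lambda\ge 2\lambda_n$, one has $|y|-\lambda_n\ge|y|/2\ge \lambda$, so the second branch of \eqref{eq2.30} yields
\[
|\phi_n(y)| \le C \lambda_n^{-1/4}\exp\!\big(-\tfrac{2}{3}(|y|/2)^{3/2}\big) \le C\lambda_n^{-1/4}e^{-c|y|^{3/2}}.
\]
For $x\in\R$ I will use the blanket bound $|\phi_n(x)|\le C\lambda_n^{-1/4}$ from the same formula, and the $L^1$ estimate $\|\phi_n\|_1\le C\lambda_n^{1/2}$ from \eqref{eq2.31}. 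The gap estimate \eqref{e2.4} (equivalently the asymptotic \eqref{e2.3}) tells me that the number of eigenvalues in $[\lambda/2,\lambda]$ is $\le C(1+\lambda^{3/2})$.

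Combining these ingredients gives
\[
\sup_x \int |K_\lambda(x,y)|\,dy \le C\|F\|_\infty \cdot \lambda^{3/2}\cdot \lambda^{-1/2}\int_{2\lambda}^\infty e^{-cy^{3/2}}\,dy \le C\|F\|_\infty\,\lambda\, e^{-c'\lambda^{3/2}},
\]
and the parallel computation, with $\|\phi_n\|_1\le C\lambda_n^{1/2}$ in place of the sup bound on $\phi_n(x)$, yields $\sup_y\int|K_\lambda(x,y)|\,dx\le C\|F\|_\infty\,\lambda^{7/4}\, e^{-c'\lambda^{3/2}}$. Both quantities are uniformly bounded in $\lambda\ge\lambda_1>1$ because the super-exponential tail swamps every polynomial factor in $\lambda$. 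I do not anticipate any serious obstacle; the only delicate point is the bookkeeping between the $\lambda_n^{-1/4}$ prefactors from the Airy asymptotics and the $\lambda^{3/2}$ count of eigenvalues in the spectral window — and the super-exponential gain makes this bookkeeping very forgiving.
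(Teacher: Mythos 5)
Your argument is correct and rests on exactly the same ingredients as the paper's proof: only the $O(\lambda^{3/2})$ eigenvalues with $\lambda_n\le\lambda$ contribute, the eigenfunctions decay super-exponentially on $I^c_{2\lambda}$ by \eqref{eq2.30}, and a crude summation then wins; your Schur-test-plus-interpolation packaging is just the kernel-level version of the paper's term-by-term bound $\sum_n |F(\lambda_n/\lambda)|\,\|\phi_n\|_p\,\|I^c_{2\lambda}\phi_n\|_{p'}$.
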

\begin{proof}
By the definition of spectral multipliers
$$
F(\LL/\lambda)I^c_{2\lambda}f = \sum_{n=1}^\infty
F(\lambda_n/\lambda){{\phi}_{n}} \langle I^c_{2\lambda}f,
{{\phi}_{n}} \rangle
$$
so
\begin{eqnarray*}
\|F(\LL/\lambda)I^c_{2\lambda}\|_{p \to p} \leq\sum_{n=1}^\infty
|F(\lambda_n/\lambda)|\|{{\phi}_{n}}\|_p
\|I^c_{2\lambda}{{\phi}_{n}}\|_{p'}.
\end{eqnarray*}

Since $\supp F \subset [1/2, 1]$ in the sum above it is enough to  consider only  such $n$ that
$\lambda_n \leq \lambda$.
It follows from  Proposition~\ref{prop1} point (1)  that $1 < \lambda_n \le \lambda$.  Hence by \eqref{eq2.31}
$$
\|{{\phi}_{n}}\|_{L^p} \leq C\lambda^{1/2}
$$
for all $1 \le p \le \infty$.
Next by \eqref{eq2.30}
$$
\|I^c_{2\lambda}{{\phi}_{n}}\|_{p'}  \leq C\exp(-\lambda)
$$
and
$$
\|F(\LL/\lambda)I^c_{2\lambda}\|_{p \to p}
\leq C\lambda^{1/2}\exp(-\lambda)\left(\sum_{\lambda_n \le \lambda} |F(\lambda_n/\lambda)|\right).
$$
By \eqref{e2.3} the number of eigenvalues below $\lambda$ is of order $\lambda^{3/2}$, so
$$
\sum_{\lambda_n \le \lambda} |F(\lambda_n/\lambda)| \leq C\lambda^{3/2}\|F\|_{\infty}.
$$
Thus
$$
\|F(\LL/\lambda)I^c_{2\lambda}\|_{p\to p } \leq
C\lambda^{2}\exp(-\lambda)\|F\|_{\infty} \le C\|F\|_{\infty}.
$$
This proves Lemma \ref{2I}.
\end{proof}

Next we shall investigate  operators $F(\LL/\lambda)I_{\lambda /4}$,
where as before $F\colon \R \to \R$ is a continuous function such
that $\supp F \subset [1/2,1]$. We obtain sufficiently precise estimates for the norm
$\|F(\LL/\lambda)I_{\lambda/4}\|_{p \to p}$. However, the range of $p$
for which the result holds is restricted to the interval  $1 \le p
\le 2$ and the estimates involve  the  norm of function $F$ in some
Sobolev spaces $H^s$ for $s>1/2$. The proof of the following proposition
follows closely an argument used in \cite[Thoerem 3.6]{CoS} and \cite[Theorem 3.2]{DOS}
which was partly motivated by results obtained by Mauceri, Meda, and  Christ in \cite{MM, Ch2}.
Some more developed versions  of this idea are described in  \cite[Theorem 4.2]{COSY} and in
\cite[Theorem 4.6]{SYY}.

\begin{proposition}\label{I/4}
Suppose that $\LL$ is an anharmonic oscillator defined by \eqref{op} and that $F\colon \R \to \R$ is
a  function such that $\supp F \subset [1/2,1]$ and $F \in H^s(\R) $ for some $s>1/2$.
Then for any  $1 \le p \le 2$
$$
\|F(\LL/\lambda)I_{\lambda/4}\|_{p\to p } \leq
C\|F\|_{H^s}
$$
for all $\lambda >0$.
\end{proposition}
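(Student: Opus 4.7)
The approach follows the standard template for spectral multiplier theorems \cite{MM, CoS, DOS, COSY, SYY}: prove the bound at the endpoints $p=2$ and $p=1$, then interpolate by Riesz--Thorin. At $p=2$ the spectral theorem gives $\|F(\LL/\lambda)\|_{L^2\to L^2}=\|F\|_{L^\infty}$, which is controlled by $\|F\|_{H^s}$ via the Sobolev embedding $H^s\hookrightarrow L^\infty$, valid because $s>1/2$.

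The substantive case is $p=1$, which by duality reduces to bounding $\sup_{|y|\le\lambda/4}\int_\R|K_\lambda(x,y)|\,dx$ uniformly by $C\|F\|_{H^s}$, where $K_\lambda(x,y)=\sum_n F(\lambda_n/\lambda)\phi_n(x)\phi_n(y)$. I would aim to prove a weighted Plancherel-type kernel estimate of the form
$$
\int_\R\bigl(1+\sqrt\lambda\,|x-y|\bigr)^{2\alpha}\,|K_\lambda(x,y)|^2\,dx\le C\sqrt\lambda\,\|F\|_{H^\alpha}^2,\qquad |y|\le\lambda/4,
$$
for some $\alpha\in(1/2,s]$, the length scale $\lambda^{-1/2}$ inside the weight being the semiclassical wavelength of the eigenfunctions $\phi_n$ in the region $|x|\ll\lambda$. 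The unweighted case $\alpha=0$ follows immediately from orthonormality, $\int|K_\lambda(x,y)|^2\,dx=\sum_n|F(\lambda_n/\lambda)|^2|\phi_n(y)|^2$, combined with the uniform bound $|\phi_n(y)|\le C\lambda^{-1/2}$ from Lemma~\ref{le2.3} (valid on $|y|\le\lambda/4$, where one is far from the turning point $\lambda_n$) and the fact that $\sim\lambda^{3/2}$ eigenvalues $\lambda_n$ lie in $[\lambda/2,\lambda]$. Upgrading to $\alpha>0$ is achieved by a commutator argument based on the identity $[M_x,\LL/\lambda]=2\partial_x/\lambda$: using an approximate functional-calculus formula $[M_x,F(\LL/\lambda)]\approx F'(\LL/\lambda)\cdot 2\partial_x/\lambda$, multiplication by $(x-y)$ on the kernel side corresponds to $\lambda^{-1/2}$ times a fractional differentiation of $F$ on the spectral side, so that the $\alpha$-power of the spatial weight consumes exactly $\alpha$ derivatives of $F$ while preserving the $\sqrt\lambda$ prefactor.

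Granted the weighted estimate, Cauchy--Schwarz with $\int_\R(1+\sqrt\lambda|u|)^{-2\alpha}\,du\sim\lambda^{-1/2}$ (finite precisely because $\alpha>1/2$) yields
$$
\int|K_\lambda(x,y)|\,dx\le \lambda^{-1/4}\cdot\bigl(C\sqrt\lambda\,\|F\|_{H^\alpha}^2\bigr)^{1/2}=C\|F\|_{H^\alpha}\le C\|F\|_{H^s},
$$
completing the $p=1$ estimate. \textbf{The main obstacle} is the commutator step leading to the weighted Plancherel estimate: the precise $\lambda$-dependence in the spatial-to-spectral correspondence must be tracked so that the $\sqrt\lambda$ factors balance, and the fractional-order version must be reached either by interpolating integer-$\alpha$ bounds or by a Helffer--Sj\"ostrand-type representation. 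The restriction $|y|\le\lambda/4$ is essential, since only there do the eigenfunctions obey the uniform bound $|\phi_n(y)|\lesssim\lambda^{-1/2}$ without the degenerate turning-point factor $(|\lambda_n-|y||+1)^{-1/4}$ of Lemma~\ref{le2.3} that would otherwise destroy uniformity in $\lambda$.
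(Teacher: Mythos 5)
Your architecture is the right one, and it is essentially the paper's: reduce to $p=1$, restrict to $|y|\le\lambda/4$ where the eigenfunctions satisfy the uniform bound $|\phi_n(y)|\le C\lambda^{-1/2}$ away from the turning points, prove a weighted Plancherel estimate at the semiclassical scale $\lambda^{-1/2}$ with weight exponent just above $1$, and finish by Cauchy--Schwarz. The target weighted estimate you write down is exactly what the paper obtains (via the Mauceri--Meda interpolation trick, interpolating the unweighted $L^1\to L^2$ bound against a high-weight estimate that comes from Gaussian heat kernel bounds rather than from commutators).

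The genuine gap is in the mechanism you propose for the weighted estimate, and it is not merely that the commutator step is unexecuted. Any version of the argument --- weighted or unweighted --- ultimately reduces to controlling discrete sums of the form $\sum_n|G(\lambda_n/\lambda)|^2$ (with $G=F$, or $G$ a derivative or divided difference of $F$ produced by the commutator expansion) by $\lambda^{3/2}\|G\|_{L^2}^2$. For a general $G\in L^2$ this is false: point values are not controlled by the $L^2$ norm, so you are forced to pass through $\|G\|_\infty$, and each such passage costs $1/2+\varepsilon$ derivatives by Sobolev embedding. Concretely, your weight-$2$ commutator estimate delivers $\lambda^{-2}\sum_n|F'(\lambda_n/\lambda)|^2|\phi_n'(y)|^2\lesssim\lambda^{-1/2}\|F'\|_\infty^2$, and interpolating this with the unweighted bound yields weight $2\alpha$ with $\|F\|_{C^\alpha}\lesssim\|F\|_{H^{\alpha+1/2+\varepsilon}}$ --- which would require $s>1$, not $s>1/2$, and destroys the sharp threshold the proposition needs. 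The missing idea is the discretization at the spectral resolution scale: mollify, writing $F=(F-F*\xi_{\lambda^{3/2}})+F*\xi_{\lambda^{3/2}}$, measure functions in the norm $\|F\|_{\lambda^{3/2},2}$ of \eqref{M}, and exploit that by \eqref{e2.4} each window of length $\lambda^{-3/2}$ contains at most one rescaled eigenvalue $\lambda_n/\lambda$ (Lemma~\ref{I4res}); the rough remainder $F-F*\xi_{\lambda^{3/2}}$ is then handled separately through the $(1+\LL)^{-c/2}$ device of Lemma~\ref{le3.2} and Corollary~\ref{coro}. Only with this near-orthogonality input does the sum-to-integral passage cost nothing, so that the interpolation lands at $H^{1/2+\varepsilon}$. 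Without it your plan proves the proposition only for $s>1$.
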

\begin{remark}\label{3.3} {\rm Note that the same argument as in Section \ref{sec3} below shows that
Proposition~\ref{I/4} does not hold any longer if  $p>4$. More precisely for any $p>4$
there exists $s>1/2$ such that estimate from Proposition \ref{I/4} is not satisfied. }
\end{remark}
\begin{proof}[Proof of Remark \ref{3.3}]
Let $\eta \in C_c^\infty(\R)$ is   such a function that $\eta(0)=1$ and  $\supp
\eta \subset [-\frac{\pi}{2},\frac{\pi}{2}]$ and set
$F_n(\tau)=\eta((4\tau/3-1)\lambda_n\sqrt{\lambda_{n+1}})$.  Then for some small $\epsilon$ and $n$ large enough one has $\supp F_n \subset [3/4-\epsilon, 3/4+\epsilon] $. Now if Proposition~\ref{I/4} holds for some $p>4$ then
$$
\left\|F_n\left(\frac{\LL}{\frac{4}{3}\lambda_n}\right)I_{\frac{4}{3}\lambda_n/4} \right\|_{p\to p } \le C\|F_n \|_{H^s} \le C\lambda_n^{3s/2-3/4}
$$
However the same argument as in the proof of Theorem~\ref{verlp} shows that
\begin{eqnarray*}
\left\|F_n\left(\frac{\LL}{\frac{4}{3}\lambda_n}\right)I_{\frac{4}{3}\lambda_n/4} \right\|_{p\to p }
=
\|I_{\lambda_n/3}{\phi}_n\|_{p'}\|{\phi}_n\|_{{p}} \ge
c\lambda_n^{-1/4}\lambda_n^{1/p'-1/2}
\end{eqnarray*}
Thus $3s/2-3/4> 1/p'-3/4$ that is $\frac{2}{3p'}<s$ and Proposition~\ref{I/4} cannot hold
for any $p>4$.
\end{proof}

The rest of this section is devoted to the proof of Proposition~\ref{I/4}.
 We split its proof into a few separate statements.
 We start with  recalling a useful notation coming from \cite{CoS}.
For any function $F\colon \R \to \R$  and any parameter  ${M} \in (1,\infty)$ we set
\begin{equation}\label{M}
 \|F\|_{{M},q}=\left(\frac{1}{{M}}\sum_{l=-\infty}^{\infty}\sup_{\theta\in [\frac{l-1}{{M}},\frac{l}{{M}})}
 |F(\theta)|^q\right)^{1/q}.
 \end{equation}
 The following lemma  plays significant role in this section and  in Section \ref{sec6} below,
 see  Lemma \ref{G2-L1}.
 Its proof is straightforward modification of the argument used in  \cite[(3.29)]{CoS} and
 \cite[Proposition 4.6]{DOS}.

\begin{lemma}\label{sup-sum}
Suppose that $s>0$ and that  $\xi \in C_c^{\infty}$ is a function such that
{\rm supp}~$\xi \subset [-1,1]$,   $\widehat{\xi}(0)=1$   and
$\widehat{\xi}^{(k)}(0)=0$  for all $1\le k  \le s+2$.  Next set
$\xi_{{M}}(\theta)={M}\xi({M}\theta)$  and assume that
 $G\colon \R \to \R$. Then
$$
\|G-G*\xi_{M}\|_{{M},q} \le C{M}^{-s}\|G\|_{W^q_s}.
$$
for all $s>1/q$.
Moreover
$$
\|G*\xi_{M}\|_{{M},q} \le \|G\|_{q}
$$
and
$$
\|G\|^q_{{M},q}
\leq C\left(\|G\|^q_{q} + {M}^{-qs}\|G\|^q_{W^q_s}\right).
$$
\end{lemma}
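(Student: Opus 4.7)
The three estimates are tightly linked. Once the first two are proven, the third follows immediately from the triangle inequality $\|G\|_{{M},q}\le\|G-G*\xi_{{M}}\|_{{M},q}+\|G*\xi_{{M}}\|_{{M},q}$ together with the convexity bound $(a+b)^q\le 2^{q-1}(a^q+b^q)$, so I would focus on the first two.

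For the second estimate I would exploit the compact support of $\xi$ in $[-1,1]$, which places $\xi_{{M}}$ in $[-1/{M},1/{M}]$. For $\theta\in I_l=[(l-1)/{M},l/{M})$ the value $G*\xi_{{M}}(\theta)$ depends only on $G|_{J_l}$ with $J_l=[(l-2)/{M},(l+1)/{M}]$, and the pointwise bound $|G*\xi_{{M}}(\theta)|\le {M}\|\xi\|_\infty\int_{J_l}|G|$ together with H\"older's inequality gives
$$
\sup_{I_l}|G*\xi_{{M}}|^q\le C{M}\int_{J_l}|G|^q.
$$
Summing over $l$, with each point lying in at most three $J_l$'s, and dividing by ${M}$, produces the second estimate (up to absorbing $\|\xi\|_\infty$ into a constant).

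For the first estimate the moment conditions $\widehat{\xi}^{(k)}(0)=0$ for $1\le k\le s+2$ and $\widehat{\xi}(0)=1$ translate into $\int y^k\xi(y)\wrt y=0$ for those $k$ and $\int\xi=1$. After the substitution $y\mapsto y/{M}$,
$$
G(\theta)-G*\xi_{{M}}(\theta)=\int\bigl[G(\theta)-G(\theta-y/{M})\bigr]\xi(y)\wrt y,
$$
and the moment cancellations allow subtraction, under the integral, of the Taylor polynomial of $G$ at $\theta$ of degree $K=\lceil s\rceil+1$ from $G(\theta-y/{M})$. The resulting Taylor remainder of order $K+1$ can be written through an integral formula of Riesz-potential type to bring in the fractional derivative $(\mathrm{Id}-\Delta)^{s/2}G$, and placing the $1/{M}$ factors in the right places yields a pointwise bound of the form
$$
\sup_{I_l}|G-G*\xi_{{M}}|^q\le C{M}^{-qs}\cdot {M}\int_{J_l'}\bigl|(\mathrm{Id}-\Delta)^{s/2}G\bigr|^q
$$
on a slightly enlarged interval $J_l'$. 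Summing in $l$ and dividing by ${M}$ finishes the first estimate.

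The main obstacle is the fractional remainder step: the Taylor expansion is naturally written with integer-order derivatives, whereas $W^q_s$ only controls the fractional derivative of order $s$. Bridging this gap requires an explicit integral representation of the remainder, which is exactly the computation carried out in \cite[(3.29)]{CoS} and \cite[Proposition 4.6]{DOS}, and which I would adapt with only minor changes to the present one-dimensional setting. The hypothesis $s>1/q$ is the sharp threshold at which the Sobolev embedding $W^{s,q}\hookrightarrow L^\infty$ on an interval of length $1/{M}$ becomes available, without which the pointwise supremum inside $\|\cdot\|_{{M},q}$ cannot be controlled by a $W^q_s$ norm.
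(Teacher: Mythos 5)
Your proposal is correct and follows essentially the same route as the paper: the second estimate is proved by the same compact-support-plus-H\"older argument (the paper uses $\|\xi_M\|_{L^{q'}}$ where you use $\|\xi\|_\infty$, which is immaterial), the third is obtained as a direct consequence of the first two, and the first is handled by the vanishing-moment/Taylor-remainder computation that the paper itself simply delegates to \cite[(3.29)]{CoS} and \cite[Proposition 4.6]{DOS}, exactly as you do.
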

\begin{proof}
For the proof of the first inequality we refer readers to \cite[(3.29)]{CoS} or \cite[Proposition 4.6]{DOS}.
To show the second estimate note that
\begin{equation*}
| \xi_{{M}}*G(\theta)|^q \leq   \|\xi_{M} \|^q_{L^{q'}}
  \int_{\theta-1/{M}}^{\theta + 1/{M}} |G(\theta')|^q \dd \theta',
\end{equation*}
so
\begin{eqnarray*}\label{f: Np estimate 2}
\|\xi_{M}* G\|_{{M},q}  &=&
\bigg( \frac{1}{{M}}\sum_{i=-\infty}^{\infty}
\sup_{\theta\in  [\frac{i-1}{{M}},\frac{i}{{M}})}|
\xi_{M}* G(\theta)|^q \bigg) ^{1/q}  \nonumber
\\  &\leq&   \frac{\| \xi_{M}\|_{L^{q'}}}{{M}^{1/q}}
\bigg( \sum_{i=-\infty}^{\infty}\int_{(i-2)/{M}}^{(i+1)/{M}}
|G(\theta')|^q \dd \theta' \biggr) ^{1/q}
\leq \frac{3\|\xi_{M}\|_{L^{q'}}}{{M}^{1/q}}  \|G\|_{L^q}  \leq C
\|G \|_{L^q} ,
\end{eqnarray*}
see also \cite[(4.9)]{DOS}. This proves the second estimate. The third estimate is a direct consequence of first two.
\end{proof}

The next step in the proof of Proposition~\ref{I/4} is to establish some  partial restriction type estimate result. Note that the global version (without projection $I_{\lambda/4}$) of
such restriction estimate is false. Indeed examining the proof of Proposition 4.8 below shows that
without projection $I_{\lambda/4}$ the Lemma \ref{I4res} can only hold if
the norm $\|F\|^2_{\lambda^{3/2},2}$ is replaced by the stronger norm $\|F\|^2_{\lambda^{3/2},4+\epsilon}$.
 We want to point out also that one has to apply following estimates to the operator $F*\xi_{M}$
so it is necessary to assume that we consider functions with support slightly outside the interval
$[1/2,1]$.  Note also  that  $ \|F(\LL/\lambda)\|_{p \to p} \le C_{\lambda_0} \|F\|_\infty$   for
all $\lambda \le \lambda_0$, any fixed $\lambda_0$ and all $ 1\le p \le \infty$.
In fact any $L^p \to L^q$ norm satisfies such estimates.
 Hence  it is enough to
consider large $\lambda$ that is $\lambda$ bigger than some fixed constant.

\begin{lemma}\label{I4res}
Suppose that $\LL$ is an anharmonic oscillator defined by \eqref{op} and that $\lambda >4$.
Assume also  that $F\colon \R \to \R$ is
a  function such that $\supp F \subset [3/8, 9/8 ]$.


Then for any  $1 \le p \le \infty$
\begin{eqnarray}\label {e2.9}
\|F(\LL/\lambda)I_{\lambda/4}\|^2_{1 \to 2 }= \sup_{|y| \le
{\lambda/4}}\int_\R|K_{F({{\LL}/\lambda})}(x,y)|^2dx\leq
C\lambda^{1/2}\|F\|^2_{\lambda^{3/2},2}
\end{eqnarray}
where $\|F\|^2_{\lambda^{3/2},2}$ is the norm defined by \eqref{M} with $M=\lambda^{3/2}$.
\end{lemma}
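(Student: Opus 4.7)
The plan is to convert the $L^1\to L^2$ operator norm into a weighted sum over eigenvalues using Plancherel, apply the pointwise bound on $\phi_n(y)$ from Lemma~\ref{le2.3}, and then turn the resulting sum into the discretised norm $\|F\|_{\lambda^{3/2},2}$ via the eigenvalue spacing \eqref{e2.4}.

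First I would write the kernel as
$$
K_{F(\LL/\lambda)}(x,y)=\sum_{n=1}^\infty F(\lambda_n/\lambda)\phi_n(x)\phi_n(y),
$$
so that by orthonormality of $\{\phi_n\}$ in $L^2(\R)$,
$$
\int_\R|K_{F(\LL/\lambda)}(x,y)|^2\,dx=\sum_{n=1}^\infty|F(\lambda_n/\lambda)|^2|\phi_n(y)|^2.
$$
The identification of $\|F(\LL/\lambda)I_{\lambda/4}\|_{1\to 2}^2$ with the supremum of this expression over $|y|\le\lambda/4$ is standard.

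Next I would exploit the support hypothesis and the location of $y$. Because $\supp F\subset[3/8,9/8]$, only indices with $\lambda_n\in[3\lambda/8,\,9\lambda/8]$ contribute, hence $\lambda_n\sim\lambda$ and, for $|y|\le\lambda/4$,
$$
\bigl||y|-\lambda_n\bigr|\ge \tfrac{3\lambda}{8}-\tfrac{\lambda}{4}=\tfrac{\lambda}{8}.
$$
Since also $|y|\le\lambda/4<\lambda_n$, we are in the oscillatory regime of \eqref{eq2.30}, which yields
$$
|\phi_n(y)|\le C\lambda_n^{-1/4}\bigl(\bigl||y|-\lambda_n\bigr|+1\bigr)^{-1/4}\le C\lambda^{-1/2}.
$$

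The main remaining step is to package the sum $\sum_n|F(\lambda_n/\lambda)|^2$ as $\|F\|_{\lambda^{3/2},2}^2$ up to the factor $\lambda^{3/2}=M$. By \eqref{e2.4}, the gap $\lambda_{n+1}-\lambda_n$ is of order $\lambda^{-1/2}$ whenever $\lambda_n\sim\lambda$, so the rescaled eigenvalues $\lambda_n/\lambda$ are separated by $\sim\lambda^{-3/2}=1/M$. Consequently each dyadic cell $[(\ell-1)/M,\ell/M)$ hosts at most $O(1)$ values $\lambda_n/\lambda$, and therefore
$$
\sum_{n}|F(\lambda_n/\lambda)|^2\le C\sum_{\ell}\sup_{\theta\in[(\ell-1)/M,\ell/M)}|F(\theta)|^2=CM\|F\|_{M,2}^2=C\lambda^{3/2}\|F\|_{\lambda^{3/2},2}^2.
$$
Combining the last two displays yields
$$
\sum_{n}|F(\lambda_n/\lambda)|^2|\phi_n(y)|^2\le C\lambda^{-1}\cdot\lambda^{3/2}\|F\|_{\lambda^{3/2},2}^2=C\lambda^{1/2}\|F\|_{\lambda^{3/2},2}^2,
$$
which is \eqref{e2.9}.

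The only real obstacle is verifying that the hypothesis $|y|\le\lambda/4$ genuinely keeps $y$ out of the exponentially decaying/Airy-transition regime of $\phi_n$; this is why the lemma is stated with the projection $I_{\lambda/4}$ rather than the identity. The quantitative separation $||y|-\lambda_n|\gtrsim\lambda$ is what produces the exact $\lambda^{-1/2}$ bound on $\phi_n(y)$, which in turn is what makes the gain match the eigenvalue density $\lambda^{3/2}$ to leave exactly a factor $\lambda^{1/2}$ on the right-hand side. Everything else is bookkeeping: the support assumption $\supp F\subset[3/8,9/8]$ (slightly larger than $[1/2,1]$) is used so the argument will still apply to $F\ast\xi_M$ later on, and the restriction $\lambda>4$ only ensures the geometric separation $\lambda/8>1$ so the $+1$ in \eqref{eq2.30} can be absorbed.
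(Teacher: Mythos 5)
Your proposal is correct and follows essentially the same route as the paper's proof: orthonormality reduces the $L^1\to L^2$ norm to $\sum_n|F(\lambda_n/\lambda)|^2|\phi_n(y)|^2$, the gap estimate \eqref{e2.4} shows each interval of length $\lambda^{-3/2}$ contains at most one rescaled eigenvalue (hence the sum is controlled by $\lambda^{3/2}\|F\|^2_{\lambda^{3/2},2}$), and the bound $|\phi_n(y)|^2\le C\lambda^{-1}$ from \eqref{eq2.30} for $|y|\le\lambda/4$ and $\lambda_n\in[3\lambda/8,9\lambda/8]$ supplies the remaining factor. Your closing remarks on the roles of the projection $I_{\lambda/4}$ and the enlarged support $[3/8,9/8]$ match the paper's own commentary.
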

\begin{proof}
By orthonormality of the  eigenfunction expansion
\begin{eqnarray*}
 \int_\R|K_{F({{\LL}/\lambda})}(x,y)|^2dx=\sum_{k=[3\lambda^{3/2}/8]+1}^{[9\lambda^{3/2}/8]+1}
\|K_{(\chi_{\left[\frac{k-1}{\lambda^{3/2}},\frac{k}{\lambda^{3/2}}\right)}F)(\LL/\lambda)}(\cdot,y)\|_{L^2}^2.
\end{eqnarray*}
Note next that if $ \lambda_{n+1} \le 9\lambda/8$ then by \eqref{e2.4}
$$
\lambda_{n+1}-\lambda_{n} \ge \frac{\pi}{2}\lambda_{n+1}^{-1/2} \ge
\lambda^{-1/2}.
$$
 Hence there is at most one number of the form $\lambda_{n}/\lambda$ which
belongs  to interval $\left[\frac{k-1}{\lambda^{3/2}},\frac{k}{\lambda^{3/2}}\right)$.
Thus
\begin{eqnarray}\label {e3.3}
 \int_\R|K_{F({{\LL}/\lambda})}(x,y)|^2dx&=&\sum_{k=[3\lambda^{3/2}/8]+1}^{[9\lambda^{3/2}/8]+1}
\|K_{(\chi_{\left[\frac{k-1}{\lambda^{3/2}},\frac{k}{\lambda^{3/2}}\right)}F)(\LL/\lambda)}(\cdot,y)\|_{L^2}^2\nonumber\\
&=&\sum_{k=[3\lambda^{3/2}/8]+1}^{[9\lambda^{3/2}/8]+1}
\int_\R\big|\sum_{\lambda_n\in\left[\frac{k-1}{\lambda^{1/2}},\frac{k}{\lambda^{1/2}}\right)}F(\lambda_n/\lambda){\phi}_n(x){\phi}_n(y)\big|^2dx\nonumber\\
&\le &\sum_{k=[3\lambda^{3/2}/8]+1}^{[9\lambda^{3/2}/8]+1}\sup_{\theta \in
\left[\frac{k-1}{\lambda^{3/2}},\frac{k}{\lambda^{3/2}}\right)}|F(\theta)|^2|{\phi}_n(y)|^2.
\end{eqnarray}
The eigenfunction ${\phi}_n$ in the last line of the estimates above corresponds to the
unique $\lambda_n$ such that $\lambda_n \in\left[\frac{k-1}{\lambda^{1/2}},\frac{k}{\lambda^{1/2}}\right)$ and if such eigenvalue
does not exist it should be replaced by $0$.
However  if $|y| \le \lambda/4$ and $\lambda_n  \in [3\lambda/ 8,9\lambda/8]$ then
by \eqref{eq2.30}
$$
|{\phi}_n(y)|^2\le C |\lambda_n| ^{-1} \le C \lambda^{-1}.
$$
Thus \eqref{e2.9} follows from \eqref{e3.3}.

\end{proof}

The next ingredient required for our main argument is a simple lemma described in
\cite{DOS}. Recall that for any positive potential $V\in L_{loc}^1(\R^d)$ we can
define the operator $L=-\Delta_d+V$ by the standard quadratic forms approach.

\begin{lemma}\label {le3.2}
Let $L=-\Delta_d+V$, where $V\in L_{loc}^1(\R^d)$ and $V\geq 0$.
Suppose that for some $c>0$
$$
\int_{\R^d}(1+V(x))^{-c}dx<\infty.
$$
Then
\begin{eqnarray}
\|(1+L)^{-c/2}\|_{L^2\to L^1}<C\int_{\R^d}(1+V(x))^{-c}dx.\nonumber
\end{eqnarray}
\end{lemma}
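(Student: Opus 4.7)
The plan is to control $\|(1+L)^{-c/2}\|_{L^2\to L^1}$ by combining a $V$-weighted Cauchy--Schwarz splitting with the form inequality $1+V\le 1+L$, which is immediate from $-\Delta\ge 0$.

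For $f\in L^2(\R^d)$ set $g=(1+L)^{-c/2}f$, so that $\|(1+L)^{c/2}g\|_{L^2}=\|f\|_{L^2}$. Writing $|g|=(1+V)^{-c/2}\cdot(1+V)^{c/2}|g|$ and applying Cauchy--Schwarz yields
\begin{equation*}
\|g\|_{L^1}\le \Bigl(\int_{\R^d}(1+V)^{-c}\,dx\Bigr)^{1/2}\Bigl(\int_{\R^d}(1+V)^{c}|g|^{2}\,dx\Bigr)^{1/2}.
\end{equation*}
The first factor is exactly the hypothesis. The problem thus reduces to establishing the bound $\int(1+V)^{c}|g|^{2}\,dx \le \|f\|_{L^2}^{2}$, which is equivalent to the operator inequality $(1+V)^{c}\le (1+L)^{c}$ in form sense.

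To obtain this operator inequality I would start from the identity
\begin{equation*}
\langle(1+L)u,u\rangle=\|\nabla u\|_{L^2}^{2}+\langle(1+V)u,u\rangle \ge \langle(1+V)u,u\rangle,
\end{equation*}
valid on the form domain of $L$, which says $1+V\le 1+L$ as positive self-adjoint operators. The L\"owner--Heinz operator monotonicity of $x\mapsto x^{\alpha}$ for $\alpha\in[0,1]$ then promotes this comparison to $(1+V)^{c}\le(1+L)^{c}$ for $c\in(0,1]$, which completes the proof in that range and produces the bound $\|(1+L)^{-c/2}\|_{L^2\to L^1}\le \bigl(\int(1+V)^{-c}\,dx\bigr)^{1/2}$.

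The principal obstacle, and the only subtle point, is the range of the L\"owner--Heinz step: operator monotonicity of $x\mapsto x^{c}$ fails in general for $c>1$, so the scheme above does not immediately cover exponents larger than one. For such $c$, one would either iterate the weighted Cauchy--Schwarz after splitting $(1+L)^{-c/2}=(1+L)^{-c_{0}/2}(1+L)^{-(c-c_{0})/2}$ with $c_{0}\in(0,1]$ and exploiting $L^{2}$-boundedness of the tail, or derive pointwise kernel bounds for $(1+L)^{-c/2}$ via the Feynman--Kac formula which exploit the condition $V\ge 0$. In either case, the essential analytic content of the lemma is the two lines above: a Cauchy--Schwarz weighted by $(1+V)^{\pm c/2}$, followed by the domination $(1+V)^{c}\le(1+L)^{c}$ inherited from the positivity of $-\Delta$.
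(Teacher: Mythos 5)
Your argument is complete and correct only for $0<c\le 1$: the weighted Cauchy--Schwarz step, the form inequality $1+V\le 1+L$, and the L\"owner--Heinz upgrade to $(1+V)^{c}\le(1+L)^{c}$ are all fine in that range (up to routine domain remarks), and they give the correctly scaled bound $\|(1+L)^{-c/2}\|_{L^2\to L^1}\le\bigl(\int(1+V)^{-c}\,dx\bigr)^{1/2}$. The problem is that the case you leave open, $c>1$, is the only one the paper ever uses: Corollary 4.8 invokes the lemma with $c=1+\varepsilon$, and for $V(x)=|x|$ on $\R$ the hypothesis $\int(1+V)^{-c}\,dx<\infty$ forces $c>1$, so for this paper your rigorous range is vacuous. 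Neither of your two proposed repairs closes the gap. The splitting $(1+L)^{-c/2}=(1+L)^{-c_0/2}(1+L)^{-(c-c_0)/2}$ with $c_0\le 1$ followed by ``$L^2$-boundedness of the tail'' simply discards the factor $(1+L)^{-(c-c_0)/2}$ and lands you on the hypothesis $\int(1+V)^{-c_0}\,dx<\infty$ with $c_0\le1$, which is strictly stronger than the assumed $\int(1+V)^{-c}\,dx<\infty$ (and false for $V=|x|$ in one dimension). More seriously, the reduction to the operator inequality $(1+V)^{c}\le C(1+L)^{c}$ is a dead end for $c>1$ even in principle: for $c=2$ it is exactly the ``separation'' property $\|Vu\|_2\le C(\|Lu\|_2+\|u\|_2)$, which is known to fail for some nonnegative locally integrable (even smooth) potentials. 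So for $c>1$ you must avoid the operator inequality altogether rather than try to prove it.

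The paper itself gives no proof, only the citation to \cite[Lemma 7.9]{DOS}; the argument there works for every $c>0$ precisely because it stays on the semigroup level. By duality and the $TT^{*}$ identity, $\|(1+L)^{-c/2}\|_{L^2\to L^1}^{2}=\|(1+L)^{-c}\|_{L^\infty\to L^1}=\int_{\R^d}(1+L)^{-c}\one(x)\,dx$, the last equality because the kernel of $(1+L)^{-c}$ is nonnegative. Writing $(1+L)^{-c}=\Gamma(c)^{-1}\int_0^\infty t^{c-1}e^{-t}e^{-tL}\,dt$, it suffices to show $\int_{\R^d}e^{-tL}\one\,dx\le\int_{\R^d}e^{-tV}\,dx$; this follows from the Feynman--Kac formula together with Jensen's inequality,
\begin{equation*}
\exp\Bigl(-\int_0^t V(B_s)\,ds\Bigr)\le\frac1t\int_0^t \exp\bigl(-tV(B_s)\bigr)\,ds,
\end{equation*}
after taking expectations and integrating in $x$ (the free heat semigroup preserves integrals). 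Fubini then yields $\|(1+L)^{-c/2}\|_{L^2\to L^1}^{2}\le\int_{\R^d}(1+V)^{-c}\,dx$ for all $c>0$. Your second suggested repair (Feynman--Kac) therefore points at the right mechanism, but the essential content of the lemma for the exponents actually needed is this probabilistic domination, not the Cauchy--Schwarz/L\"owner--Heinz pair you identify as the analytic core.
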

 \begin{proof}
 For the proof we refer readers to \cite[Lemma 7.9]{DOS}.
 \end{proof}

Following corollary is a straightforward consequence of
Lemmas~\ref{I4res} and \ref{le3.2}
\begin{coro}\label{coro}
Suppose that $\LL$ is an anharmonic oscillator defined by \eqref{op}.
Assume also  that $F\colon \R \to \R$ is
a  function such that $\supp F \subset [1/4, 2 ]$. Then   for every  $\varepsilon
>0$ there exists a constant $C_{\varepsilon}$ such that
\begin{eqnarray*}
\|F(\LL/\lambda)I_{\lambda/4}\|^2_{1\to 1 } \leq
C_{\varepsilon}\lambda^{(3/2+\varepsilon)}\|F\|^2_{\lambda^{3/2},2}.
\end{eqnarray*}
for all $\lambda >4$.
\end{coro}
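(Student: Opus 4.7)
The plan is to prove the corollary by interpolating the $L^1 \to L^2$ bound of Lemma~\ref{I4res} with an $L^2 \to L^1$ bound coming from Lemma~\ref{le3.2}, using a negative power of $1+\LL$ as the bridge.

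First I would choose $c = 1 + \varepsilon$. Since $\int_\R (1+|x|)^{-c}\,dx = 2/\varepsilon < \infty$, Lemma~\ref{le3.2} applied with $V(x) = |x|$ yields
\[
\|(1+\LL)^{-c/2}\|_{L^2 \to L^1} \leq C_\varepsilon.
\]

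Next I would define $G(\tau) = (1+\lambda\tau)^{c/2} F(\tau)$, so that by the functional calculus $F(\LL/\lambda) = (1+\LL)^{-c/2}\, G(\LL/\lambda)$. Composing with $I_{\lambda/4}$ on the right and taking operator norms gives
\[
\|F(\LL/\lambda) I_{\lambda/4}\|_{L^1 \to L^1} \leq \|(1+\LL)^{-c/2}\|_{L^2 \to L^1} \cdot \|G(\LL/\lambda) I_{\lambda/4}\|_{L^1 \to L^2}.
\]
On $\supp F \subset [1/4, 2]$ one has $(1+\lambda\tau)^{c/2} \leq C\lambda^{c/2}$, so $\|G\|^2_{\lambda^{3/2}, 2} \leq C \lambda^c \|F\|^2_{\lambda^{3/2}, 2}$. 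Applying Lemma~\ref{I4res} to $G$ yields
\[
\|G(\LL/\lambda) I_{\lambda/4}\|^2_{L^1 \to L^2} \leq C \lambda^{1/2} \|G\|^2_{\lambda^{3/2}, 2} \leq C\lambda^{1/2 + c} \|F\|^2_{\lambda^{3/2}, 2}.
\]
Squaring the previous chain and combining produces
\[
\|F(\LL/\lambda) I_{\lambda/4}\|^2_{L^1 \to L^1} \leq C_\varepsilon \lambda^{1/2 + c} \|F\|^2_{\lambda^{3/2}, 2} = C_\varepsilon \lambda^{3/2 + \varepsilon} \|F\|^2_{\lambda^{3/2}, 2},
\]
which is the required bound.

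The only delicate point is the mismatch between the support hypothesis of Lemma~\ref{I4res} ($\supp F \subset [3/8, 9/8]$) and of the corollary ($\supp F \subset [1/4, 2]$). In the envisaged applications (in particular to Proposition~\ref{I/4}) the relevant multipliers are convolutions of functions supported in $[1/2, 1]$ with $\xi_{\lambda^{3/2}}$, whose support for $\lambda$ large lies inside $[3/8, 9/8]$, so Lemma~\ref{I4res} applies verbatim to the corresponding $G$. More generally, a direct re-examination of the proof of Lemma~\ref{I4res} shows that the key estimate $|\phi_n(y)|^2 \leq C\lambda^{-1}$ for $|y|\leq \lambda/4$ continues to hold away from the classical turning points, so the argument extends—after a small split into the bulk piece and the thin piece near the endpoints—to the wider support.
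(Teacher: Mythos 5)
Your argument is the paper's proof essentially verbatim: the same choice $c=1+\varepsilon$, the same auxiliary function $G(\theta)=(1+\lambda\theta)^{c/2}F(\theta)$, and the same factorization $\|F(\LL/\lambda)I_{\lambda/4}\|_{1\to 1}\le \|(1+\LL)^{-c/2}\|_{L^2\to L^1}\,\|G(\LL/\lambda)I_{\lambda/4}\|_{1\to 2}$ combined with Lemmas~\ref{le3.2} and~\ref{I4res}. Your closing remark correctly flags the one point the paper silently glosses over (the support hypotheses $[1/4,2]$ versus $[3/8,9/8]$), and in the sole application the support does lie in $[3/8,9/8]$; be aware, however, that your suggested extension of Lemma~\ref{I4res} to all of $[1/4,2]$ with the same $\lambda^{1/2}$ constant genuinely fails when $\lambda_n$ and $|y|$ are both near the turning point $\lambda/4$, where \eqref{eq2.30} only gives $|\phi_n(y)|^2=O(\lambda^{-1/2})$ rather than $O(\lambda^{-1})$.
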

 \begin{proof}
It is enough to note that if $c=1+\varepsilon$ and $G(\theta) =(1+\lambda\theta)^{c/2}F(\theta)$  then
\begin{eqnarray*}
\|F(\LL/\lambda)I_{\lambda/4}\|^2_{1\to 1 }\le
\|(1+\LL)^{c/2}F(\LL/\lambda)I_{\lambda/4}\|^2_{1\to 2 }
\|(1+\LL)^{-c/2}\|^2_{L^2\to L^1}\\ \le C_{\varepsilon}
\lambda^{1/2}\|G\|^2_{\lambda^{3/2},2} \le C_{\varepsilon}
  \lambda^{(3/2+\varepsilon)}\|F\|^2_{\lambda^{3/2},2}.
\end{eqnarray*}
\end{proof}

The rest of the proof of Proposition~\ref{I/4} is now a straightforward modification of argument used in
\cite[Lemma 3.4]{CoS} or in \cite[Section 4]{DOS}. Therefore here we only sketch the proof to show the role and  significance of Lemma~{\rm\ref{I4res}}.

\begin{proof}[{ Proof of Proposition~\ref{I/4}}]
It is enough to show that for any  ${\varepsilon} >0$
$$
\|F(\LL/\lambda)I_{\lambda/4}\|_{1 \to 1 }\leq
C\|F\|_{H^{1/2+{\varepsilon}}}.
$$
To do this consider function $\xi_\lambda$ defined in Lemma \ref{sup-sum}
and set
$$
F(\LL/\lambda
)=(F-F*\xi_{\lambda^{3/2}})(\LL/\lambda)+F*\xi_{\lambda^{3/2}}(\LL/\lambda).
$$
Recall that   $ \|F(\LL/\lambda)\|_{p \to p} \le C \|F\|_\infty$  for any $\lambda < \lambda_0$, any fixed $\lambda_0$ and $ 1\le p \le \infty$. Hence  it is enough to
consider large $\lambda$  and we can assume that $\supp F*\xi_{\lambda^{3/2}} \subset [3/8,9/8]$. Now  by Corollary \ref{coro} and Lemma \ref{sup-sum}
\begin{eqnarray}\label{e6.4}
\|(F-F*\xi_{\lambda^{3/2}})(\LL/\lambda)I_{\lambda/4}\|^2_{1 \to 1}&\leq&
C\lambda^{3/2+\varepsilon}\|(F-F*\xi_{\lambda^{3/2}})\|_{\lambda^{3/2},2}^2\nonumber\\
&\leq&C\lambda^{3/2+\varepsilon} \lambda^{-2(3/2)(1/2+\varepsilon/3)}
\|F\|^2_{H^{1/2+\varepsilon/3}}\nonumber\\
&\leq&C\|F\|^2_{H^{1/2+\varepsilon/3}}.
\end{eqnarray}

\bigskip

To estimates the term corresponding to $F*\xi_{\lambda^{3/2}}$
we note that by Lemmas \ref{I4res} and \ref{sup-sum}
\begin{eqnarray*}
\|F*\xi_{\lambda^{3/2}}(\LL/\lambda) I_{\lambda/4}\|^2_{1 \to 2} \leq
 C\lambda^{1/2} \|F*\xi_{\lambda^{3/2}}\|_{\lambda^{3/2},2}^2
\le
C\lambda^{1/2}\|F\|^2_2.
\end{eqnarray*}
Equivalently the above inequality can be stated as
\begin{eqnarray}\label{mm}
\sup_{|y| \le  \lambda/4} \int_{\R}|K_{F*\xi_{\lambda^{3/2}}(\LL/\lambda)}(x,y) |^2dx \leq
C \lambda^{1/2}\|F\|^2_2.
\end{eqnarray}
However we recall that  for any  operator satisfying Gaussian-type heat
kernel bounds the following basic estimate holds, see \cite[(4,4) and (4.5)] {DOS} or  \cite{He95}
\begin{eqnarray*}
\sup_{|y| \le  \lambda/4} \int_{\R}|K_{F*\xi_{\lambda^{3/2}}(\LL/\lambda)}(x,y) |^2(1+\lambda^{1/2}|x-y|)^s dx
\leq C |B(y, \lambda^{1/2})|\|F*\xi_{\lambda^{3/2}}\|^2_{H^{(s+1)/2+\varepsilon}}\\
=C \lambda^{1/2}\|F*\xi_{\lambda^{3/2}}\|^2_{H^{(s+1)/2+\varepsilon}}\le C \lambda^{1/2}\|F\|^2_{H^{(s+1)/2+\varepsilon}}.
\end{eqnarray*}
Now one can  use  Mauceri-Meda interpolation trick, see \cite[Lemma 4.3]{DOS}. That is, we can consider the above
estimates with large  $s$ and interpolate  with inequality \eqref{mm} to show that
\begin{eqnarray*}
\sup_{|y| \le  \lambda/4} \int_{\R}
|K_{F*\xi_{\lambda^{3/2}}(\LL/\lambda)}(x,y) |^2(1+\lambda^{1/2}|x-y|)^{1+\varepsilon'} dx
\le C \lambda^{1/2}\|F\|^2_{H^{1/2+\varepsilon''}}.
\end{eqnarray*}
for all $\varepsilon' < \varepsilon'' $
 Alternatively one can prove that the above estimates follows from Lemma \ref{I4res} using the finite propagation speed
 for the  wave equation technique, see   \cite[(3.10) and (3.28)]{CoS}.
 The last estimate and the Cauchy-Schwarz inequality yield
 \begin{eqnarray*}
\sup_{|y| \le  \lambda/4} \int_{\R}
|K_{F*\xi_{\lambda^{3/2}}(\LL/\lambda)}(x,y) | dx
\le C  \|F\|^2_{H^{1/2+\varepsilon''}}.
\end{eqnarray*}
 Thus
 \begin{eqnarray*}
\|F*\xi_{\lambda^{3/2}}(\LL/\lambda) I_{\lambda/4}\|^2_{1 \to 1} \le C
\|F\|^2_{H^{1/2+\varepsilon}}.
\end{eqnarray*}
This finishes the proof of Proposition~\ref{I/4}.
\end{proof}

\bigskip

One of most surprising points of our approach is the fact that the optimal spectral multiplier result
cannot be obtained as a consequence of restriction type estimates and one has to develop new techniques to obtain the optimal Bochner-Riesz summability exponent. To illustrate this point we will
sketch the proof of Proposition~\ref{th3.1} below even though this result does not give
the sharp Bochner-Riesz summability  result described in Theorem~\ref{Riesz}. Inspecting the proof it is easy to
note that estimate \eqref{e20.9} fails for $q<4$ so this approach does not lead to the optimal
result for Bochenr -Riesz summability. Property \eqref{e20.9} is a discrete version of restriction type estimates close in nature to Sogge's cluster type estimates, see for example \cite[Theorem 2.2 and Corollary 2.3]{CoS}. It is interesting to note here that in the setting of the standard
Laplace operator of classical Fourier Transform the Bochner-Riesz and Restriction conjecture are
very closely related, see \cite{T10}.

 \begin{proposition}\label{th3.1}
 Suppose that operator $\LL$ is defined by formula \eqref{op} and  assume also that   $\supp F\subset [-1,1]$. Then for all $1 \le p \le \infty$ and any $s>1/2$,
$$
\sup_{t>0}\|F(t\LL)\|_{L^p\to L^p}\leq C\|F\|_{W_s^4},
$$
where $\|F\|_{W_s^4}=\|(1-d_x^2)^{s/2}F\|_{4}$ is the norm in $L^4$ Sobolev space of order $s$.
\end{proposition}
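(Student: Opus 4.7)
The plan is to prove $\|F(\LL/\lambda)\|_{p\to p}\le C\|F\|_{W_s^4}$, where $\lambda=1/t$ and, after a dyadic decomposition of $\supp F\subset[-1,1]$ together with rescaling, we may assume $\supp F\subset[1/2,1]$. Split the identity as $I=I_{\lambda/4}+\chi_A+I^c_{2\lambda}$, with $\chi_A:=I_{2\lambda}-I_{\lambda/4}$, and expand $F(\LL/\lambda)$ accordingly. The blocks containing $I^c_{2\lambda}$ contribute at most $C\|F\|_\infty$ by Lemma~\ref{2I}. The blocks containing $I_{\lambda/4}$ are handled by Proposition~\ref{I/4}, directly for $1\le p\le 2$ and via self-adjoint duality of $\LL$ for $2\le p\le\infty$; since $F$ has support in a fixed bounded interval and $s>0$, one checks $\|F\|_{H^s}\le C\|F\|_{W_s^4}$ by comparing the norms on $[-2,2]$ and using rapid decay of the Bessel kernel off $\supp F$.

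The only remaining block is the annular one, $F(\LL/\lambda)\chi_A$ (equivalently, by duality, $\chi_A F(\LL/\lambda)$), which is the heart of the argument. Set $M=\lambda^{3/2}$; by the spectral gap~\eqref{e2.4} each interval $[(l-1)/M,l/M)$ contains at most one rescaled eigenvalue $\lambda_n/\lambda$. Decomposing $F=\sum_l F_l$ with $F_l=F\cdot\chi_{[(l-1)/M,l/M)}$ yields
$$
F(\LL/\lambda)=\sum_l F(\lambda_{n(l)}/\lambda)\,\phi_{n(l)}\otimes\phi_{n(l)},
$$
each $F_l(\LL/\lambda)$ having rank at most one. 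A discrete restriction-type inequality of Sogge cluster form~\eqref{e20.9}, modelled on \cite[Theorem~2.2, Corollary~2.3]{CoS}, assembles these rank-one pieces into
$$
\|F(\LL/\lambda)\chi_A\|_{p\to p}\le C\|F\|_{M,4}.
$$
The exponent $q=4$ is critical: by Lemma~\ref{le2.3}, $\|\phi_n\|_4\sim\lambda_n^{-1/4}(\log\lambda_n)^{1/4}$, which saturates the cluster estimate and shows that the same inequality is false for $q<4$. Finally, Lemma~\ref{sup-sum} applied with $q=4$, $M=\lambda^{3/2}$ and $s>1/2$ converts $\|F\|_{M,4}$ into $C\|F\|_{W_s^4}$, uniformly in $\lambda$, closing the bound.

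The main obstacle is the cluster inequality~\eqref{e20.9} on the annular region where the eigenfunctions with $\lambda_n\sim\lambda$ have their full oscillatory amplitude; this is where the restriction-type ingredient really enters, since the contribution from $|x|\le\lambda/4$ and from $|x|\ge 2\lambda$ can be handled by more elementary means. The sharpness of the $q=4$ threshold -- a rigid consequence of Lemma~\ref{le2.3} -- prevents this route from recovering the sharp Bochner-Riesz exponent of Theorem~\ref{Riesz} at general $p$, which motivates the alternative, restriction-free techniques developed in Sections~\ref{sec5} and~\ref{sec6}.
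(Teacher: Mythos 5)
Your overall strategy --- a discrete cluster/restriction estimate in the norm $\|F\|_{\lambda^{3/2},q}$ with $q$ near $4$, fed into the multiplier machinery of Section \ref{sec4} --- is the same as the paper's, but two steps do not survive scrutiny. First, the opening reduction ``by dyadic decomposition and rescaling we may assume $\supp F\subset[1/2,1]$'' fails. Writing $F=\sum_j F\psi_j$ with $\supp F\psi_j\subset[2^{-j-1},2^{-j+1}]$ and rescaling each piece to $G_j(x)=(F\psi_j)(2^{-j}x)$ supported in $[1/2,2]$, one has $\|G_j\|_{4}=2^{j/4}\|F\psi_j\|_4\gtrsim |F(0)|$ for every large $j$, so $\sum_j\|G_j\|_{W^4_s}$ diverges whenever $F(0)\neq 0$ and the triangle inequality over dyadic blocks does not close. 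This matters: the proof of Theorem \ref{Riesz} applies Proposition \ref{th3.1} precisely to a piece $F^\alpha_1$ supported in $(-3/4,3/4)$ with $F^\alpha_1(0)\ne 0$. The paper avoids any such reduction: it proves the restriction estimate \eqref{e20.9} for the full support $[-1,1]$ in one pass, the sum over the entire spectrum below $\lambda$ converging because $|\phi_n(y)|^2\lesssim \lambda^{-1/2}\bigl(\bigl||y|-\lambda_n\bigr|+1\bigr)^{-1/2}$ decays away from the turning point. (Dyadic pieces are spectrally orthogonal, so the decomposition is harmless for the $L^1\to L^2$ estimate itself; it is only the final summation of $L^p\to L^p$ norms that breaks.)

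Second, you assert the cluster inequality at the endpoint $q=4$, where it is false by a logarithm: for $|y|\sim\lambda$ the quantity $\lambda^{-3/2}\sum_k\bigl(\lambda/(\lambda|1-k\lambda^{-3/2}|+1)\bigr)^{p'/2}$ arising in the proof is uniformly bounded only for $p'<2$, i.e.\ $q=2p>4$; at $q=4$ it grows like $\log\lambda$, exactly matching the factor $(\log\lambda_n)^{1/4}$ in $\|\phi_n\|_4$ that you yourself quote as ``saturating'' the estimate. The paper therefore proves \eqref{e20.9} only for $q=4+\varepsilon$, and recovers the $W^4_s$ statement because $\|F\|_{W^{4+\varepsilon}_{s-\delta}}\le C\|F\|_{W^4_s}$ for suitable small $\delta,\varepsilon$ with $s-\delta>1/2$. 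Finally, be aware that passing from the $L^1\to L^2$ bound \eqref{e20.9} to $\|F(\LL/\lambda)\|_{p\to p}$ is not a formal assembly of rank-one pieces: it requires the $(1+\LL)^{-c/2}$ estimate of Lemma \ref{le3.2}, the splitting $F=(F-F*\xi_{M})+F*\xi_{M}$ of Lemma \ref{sup-sum}, and the Mauceri--Meda interpolation, exactly as in the proof of Proposition \ref{I/4}; once this is done with the $q>4$ norm, the restriction estimate holds globally in $y$ and your spatial splitting into $I_{\lambda/4}$, the annulus and $I^c_{2\lambda}$ becomes unnecessary.
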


\begin{proof}
It is enough to prove Proposition~\ref{th3.1} for p=1. The rest of the range follows then by
self-adjointness and interpolation.
Using the same approach as in Proposition~\ref{I/4}, Lemma~{\rm\ref{I4res}} and
Corollary~\ref{coro}
 it is not difficult to note that to prove Proposition~\ref{th3.1} it is enough to show the following version
 of  estimate (\ref{e2.9})
 \begin{eqnarray}\label {e20.9}
\|F(\LL/\lambda)\|^2_{1 \to 2 }= \sup_{y}\int_\R|K_{F({{\LL}/\lambda})}(x,y)|^2dx\leq
C\lambda^{1/2}\|F\|^2_{\lambda^{3/2},q}
\end{eqnarray}
 for
$q=4+\varepsilon$, for all $\varepsilon>0$ and for all functions $F$
such that  $\supp F\subset [-1, 1]$.

 We are going
to prove  estimate (\ref{e20.9}) only for $y=\lambda$ as the proof for other
$y\in \R$ is similar or simpler. By \eqref{e3.3}, estimate \eqref{eq2.30}
and H\"older's inequality
\begin{eqnarray*}
 \int_\R|K_{F({{\LL}/\lambda})}(x,y)|^2dx=\sum_{k=1}^{[\lambda^{3/2}]+1}\sup_{\theta \in
\left[\frac{k-1}{\lambda^{3/2}},\frac{k}{\lambda^{3/2}}\right)}|F(\theta)|^2|{\phi}_n(y)|^2\\
\le C\sum_{k=1}^{[\lambda^{3/2}]+1}\sup_{\theta \in
\left[\frac{k-1}{\lambda^{3/2}},\frac{k}{\lambda^{3/2}}\right)}|F(\theta)|^2\lambda^{-1/2}
(|\lambda -k\lambda^{-1/2}|+1)^{-1/2} \\ \le
C\lambda^{1/2}\|F\|^2_{\lambda^{3/2},2p}
\left( \lambda^{-3/2}\sum_{k=1}^{[\lambda^{3/2}]+1}
\left(\frac{\lambda}{\lambda|1 -k\lambda^{-3/2}|+1}\right)^{p'/2}  \right)^{1/p'}.
\end{eqnarray*}
Recall that the eigenfunction ${\phi}_n$ in the above  estimates corresponds to the
unique $\lambda_n$ such that $\lambda_n \in\left[\frac{k-1}{\lambda^{1/2}},\frac{k}{\lambda^{1/2}}\right)$ and if such eigenvalue
does not exist it should be replaced by $0$. Note that the last sum is uniformly bounded independently of $\lambda$ if $p'<2$. This shows
(\ref{e20.9}) for any $q=2p>4$ and finishes the proof of Proposition \ref{th3.1}.
\end{proof}

\section{More light on Airy function }\label{sec5}
Consider  the Airy operator which formally defined by the formula
\begin{equation}\label{Airy}
{\A} = -\frac{d^2}{dx^2} + x.
\end{equation}
The Airy function $\Ai $ which we recall in Section~\ref{sec2} is a
bounded on ${\mathbb R}$ solution of of the equation  $\A f = 0$.
Another linearly independent solution of this equation function
$\Bi$  grows exponentially as $x \to \infty$ so it is not a tempered
distribution and is not relevant to our discussion here.

Using just function  $\Ai$ we can describe complete system of
eigenfunctions of ${\A}$.  Set ${\varphi}_\lambda(x) = \Ai(x -
\lambda)$. For any function $f\in L^2(\R)$ we define the Airy
Transform by the following formula
$$
{\mathcal T}{f}(\lambda) = \langle f, {\varphi}_{\lambda} \rangle =
(f*{\check \Ai})(\lambda),
$$
where ${\check \Ai}(\lambda)={ \Ai}(-\lambda)$.

Since ${\widehat \Ai}(\omega) = \exp(i\omega^3/3)$, mapping
 ${\mathcal T}$ is an isometry on $L^2(\R)$
and its inverse is given by
$$
{\mathcal T^{-1}}{g}(x) =\Ai*{ g}(x).
$$
for any $g\in  L^2(\R)$.
\begin{lemma}\label{poz}
Suppose that $F \colon \R \to \R $ is a bounded function and let
$F({\A})$ be the spectral multiplier corresponding to function $F$
and the Airy operator ${\A}$. Then
$$
{\mathcal T} (F(\A) f)(\lambda) = F(\lambda) {\mathcal T}{f}(\lambda)
$$
for all $f\in L^2(\R)$. In addition   $K_{F(\A)}(x,y)=F(\A)\delta_y(x)$ - the kernel of the operator $F(\A)$ is described by the formula
\begin{eqnarray*}
K_{F(\A)}(x,y)=\int_{-\infty}^{\infty} F(\lambda) {\varphi}_\lambda(x){\varphi}_\lambda(y) d\lambda =\int_{-\infty}^{\infty} F(\lambda) \Ai(x - \lambda)\Ai(y - \lambda) d\lambda.
\end{eqnarray*}
Moreover
 \begin{equation*}
\int_\R |K_{F(\A)}(x,y)|^2dy=\int_\R|F(\lambda) \Ai(x - \lambda)|^2d\lambda
\end{equation*}
for all $x\in \R$.
\end{lemma}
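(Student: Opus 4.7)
The plan is to show that the Airy transform $\mathcal T$ diagonalises $\A$ and then to read the kernel and $L^2$ identities off this diagonalisation. First I would verify that each $\varphi_\lambda(x)=\Ai(x-\lambda)$ is a generalised eigenfunction of $\A$ with eigenvalue $\lambda$: the defining identity $\Ai''(u)=u\,\Ai(u)$ gives, by direct substitution, $-\varphi_\lambda''(x)+x\varphi_\lambda(x)=(x-(x-\lambda))\varphi_\lambda(x)=\lambda\varphi_\lambda(x)$, so $\A\varphi_\lambda=\lambda\varphi_\lambda$. Using the convolution description $\mathcal T f=f*\check\Ai$ together with the stated identity $\widehat{\Ai}(\omega)=\exp(i\omega^3/3)$, the Fourier side of $\mathcal T$ is multiplication by a unit-modulus function, hence $\mathcal T$ is unitary on $L^2(\R)$ with inverse $\mathcal T^{-1}g=\Ai*g$ as recalled in the excerpt.

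Next I would establish the intertwining relation $\mathcal T(\A f)(\lambda)=\lambda\,\mathcal T f(\lambda)$. On the Schwartz class this follows from $\mathcal T(\A f)(\lambda)=\langle \A f,\varphi_\lambda\rangle$ by two integrations by parts against $\varphi_\lambda$, using the eigenfunction relation from the first step together with the decay of $\Ai$ on $(0,\infty)$ recorded in \eqref{e2.5} and the oscillatory asymptotic on $(-\infty,0)$ from \eqref{e2.6}. By density and the uniqueness of the Borel functional calculus this extends to $\A=\mathcal T^{-1}M_\lambda\,\mathcal T$, where $M_\lambda$ denotes multiplication by $\lambda$; consequently $F(\A)=\mathcal T^{-1}M_F\,\mathcal T$ for every bounded Borel~$F$, which is precisely the claimed identity $\mathcal T(F(\A)f)(\lambda)=F(\lambda)\mathcal T f(\lambda)$.

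For the kernel formula I would simply unfold this relation; for $f\in L^2(\R)$,
\[
F(\A)f(x)=\mathcal T^{-1}(F\cdot\mathcal T f)(x)=\int_\R\Ai(x-\lambda)F(\lambda)\Bigl(\int_\R f(y)\Ai(y-\lambda)\,dy\Bigr)d\lambda,
\]
and a Fubini exchange (legitimate for compactly supported $F$ and Schwartz $f$, and then extended by approximation) gives $K_{F(\A)}(x,y)=\int F(\lambda)\varphi_\lambda(x)\varphi_\lambda(y)\,d\lambda$. For the final identity, the key observation is that for fixed $x$ the map $y\mapsto K_{F(\A)}(x,y)$ coincides with $\mathcal T^{-1}\bigl(F(\cdot)\Ai(x-\cdot)\bigr)(y)$, so the isometry property of $\mathcal T$ immediately yields $\int_\R|K_{F(\A)}(x,y)|^2\,dy=\int_\R|F(\lambda)\Ai(x-\lambda)|^2\,d\lambda$.

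The main technical point I foresee is justifying the integrations by parts and the Fubini exchange beyond the Schwartz class, since $\Ai$ only decays on the positive axis while it oscillates with order $|u|^{-1/4}$ decay on the negative axis; but these difficulties are handled by combining \eqref{e2.5}--\eqref{e2.6} with a standard density argument, so no conceptual obstacle is expected.
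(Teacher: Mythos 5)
Your argument is correct and coincides with the paper's intended proof: the paper disposes of this lemma in one line, citing exactly the eigenfunction relation $\A\varphi_\lambda=\lambda\varphi_\lambda$ and ``a standard argument'', and your write-up is that standard argument spelled out (diagonalisation of $\A$ by the unitary $\mathcal T$, transfer of the Borel functional calculus, unfolding to get the kernel, and the isometry of $\mathcal T^{-1}$ applied to $\lambda\mapsto F(\lambda)\Ai(x-\lambda)$ for the final $L^2$ identity). The only caveat, which is really with the statement itself rather than with your proof, is that the pointwise kernel formula and the last identity need $F$ to have some decay (e.g.\ compact support, as in every application in the paper) for the $\lambda$-integrals to converge absolutely, since $|\Ai(x-\lambda)\Ai(y-\lambda)|\sim\lambda^{-1/2}$ as $\lambda\to+\infty$; you correctly restrict to this case in the Fubini step.
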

\begin{proof}
Lemma \ref{poz} follows from the definition of the Airy transform ${\mathcal T}$
and the following simple observation
$$
{\A}{\varphi}_\lambda = \lambda{\varphi}_\lambda
$$
by a standard argument.
\end{proof}

In the sequel it will be convenient to use the following
 description of the asymptotic
of  the Airy function which for $x$ negative  is a slightly more precise version of
estimate \eqref{e2.6}
\begin{lemma}\label{prop7.1}
The Airy function can be expanded as
\begin{equation}\label{ai-asymp}
\Ai(x) = \exp(i{\zeta}(x))\theta(x) + \exp(-i{\zeta}(x)){\overline
\theta}(x)
\end{equation}
where ${\zeta}(x) = 2|x|^{3/2}/3$ for $x < -1$. Moreover,  function
${\zeta}$ and all of its  derivatives are bounded for $x \geq -1$
and $|d^k_x \theta|(x) \leq C_k(1 + |x|)^{-k - 1/4}$ for all $x\in
\mathbb{R}$.
\end{lemma}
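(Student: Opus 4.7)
The plan is to derive the stated expansion from the method of stationary phase applied to the Fourier integral representation
\[
\Ai(x)=\frac{1}{2\pi}\int_{-\infty}^{\infty}e^{i(t^{3}/3+xt)}\,dt.
\]
In the regime $x\leq -1$ I would substitute $t=|x|^{1/2}s$, so that
\[
\Ai(x)=\frac{|x|^{1/2}}{2\pi}\int e^{i|x|^{3/2}\psi(s)}\,ds,\qquad \psi(s)=\frac{s^{3}}{3}-s,
\]
with large parameter $\lambda=|x|^{3/2}$ and two non-degenerate critical points $s=\pm 1$ of values $\psi(\pm 1)=\mp 2/3$. The two values $\mp \zeta(x)/|x|^{3/2}$ are precisely the phases appearing in the statement, which is the guiding observation: the two stationary points will contribute the two conjugate oscillating terms $e^{\pm i\zeta(x)}\theta(x)$.

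Next I would use a smooth partition of unity $1=\chi_{+}(s)+\chi_{-}(s)+\chi_{0}(s)$, with $\chi_{\pm}$ supported in small neighborhoods of $\pm 1$ and $\chi_{0}$ supported away from both points. On $\supp\chi_{0}$ the phase has no critical points, so repeated integration by parts with $L=(i\lambda\psi')^{-1}\partial_s$ shows the $\chi_{0}$-contribution is $O(|x|^{-N})$ for every $N$, and can be absorbed into $\theta$. For the $\chi_{\pm}$ pieces, the standard Morse-lemma change of variables brings $\psi$ to normal form $\pm s^{2}$, yielding
\[
\int\chi_{\pm}(s)e^{i\lambda\psi(s)}\,ds=\sqrt{\pi/\lambda}\,e^{\mp 2i\lambda/3 \pm i\pi/4}\bigl(a_{\pm,0}+a_{\pm,1}\lambda^{-1}+a_{\pm,2}\lambda^{-2}+\cdots\bigr),
\]
in the usual asymptotic sense. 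Setting $\zeta(x)=\tfrac{2}{3}|x|^{3/2}$ and collecting, the $s=-1$ contribution gives $e^{i\zeta(x)}\theta(x)$ with
\[
\theta(x)\sim \tfrac{1}{2\sqrt{\pi}}\,|x|^{-1/4}e^{-i\pi/4}\bigl(1+c_{1}|x|^{-3/2}+c_{2}|x|^{-3}+\cdots\bigr),
\]
and the $s=+1$ contribution gives the complex conjugate $e^{-i\zeta(x)}\overline{\theta}(x)$. Thus the identity $\Ai=e^{i\zeta}\theta+e^{-i\zeta}\overline{\theta}$ holds on $x\leq -1$ up to errors absorbable into $\theta$.

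For the derivative bounds on $\theta$, I would differentiate under the integral sign: each $\partial_x$ multiplies the integrand by $it=i|x|^{1/2}s$, which after the stationary-phase expansion produces only an extra factor $|x|^{-1}$ at the leading level (the extra $|x|^{1/2}$ from $t$ is beaten by the $|x|^{3/2}$ gain of the next order term in the expansion). Equivalently, since the full asymptotic series for $\theta$ is in integer powers of $|x|^{-3/2}$ starting at $|x|^{-1/4}$, termwise differentiation produces a series starting at $|x|^{-5/4}$, in agreement with the bound $|d_x^{k}\theta(x)|\leq C_k(1+|x|)^{-k-1/4}$. The uniform remainder bound after truncation at any finite order is a standard consequence of integration by parts in the stationary-phase argument. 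Finally, for $x\geq -1$ I would choose any smooth extension of $\zeta$ with all derivatives bounded on $[-1,\infty)$ and simply set $\theta(x)=\tfrac{1}{2}\Ai(x)e^{-i\zeta(x)}$; here the bound $|d_x^{k}\theta(x)|\leq C_k(1+|x|)^{-k-1/4}$ follows from the super-polynomial decay \eqref{e2.5} of $\Ai$ for $x\geq 0$, the smoothness/boundedness of $\Ai$ on $[-1,0]$, and the boundedness of the derivatives of $\zeta$. A smooth cutoff at $x=-1$ can be used to glue the two constructions into a single smooth $\theta$.

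The main technical obstacle will be verifying the derivative estimates uniformly across the regime $x\leq -1$: one must ensure that differentiating the stationary-phase expansion does not spoil the $(1+|x|)^{-k-1/4}$ decay, even as $x$ approaches the transition point $x=-1$, and that the expansion can be differentiated term by term with uniform control of the remainder. This is standard but requires careful integration-by-parts bookkeeping with the specific phase $\psi(s)=s^{3}/3-s$, particularly in the cutoff region where $\chi_{\pm}$ join $\chi_{0}$.
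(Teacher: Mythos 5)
Your outline is correct, and it reaches the lemma by a genuinely different route from the paper. You run the classical stationary--phase argument directly on the real oscillatory integral $\Ai(x)=\frac{1}{2\pi}\int e^{i(t^3/3+xt)}\,dt$, rescaling to $\lambda=|x|^{3/2}$, splitting off the two non-degenerate critical points $s=\pm1$ with a partition of unity, and identifying $\theta$ with the $s=-1$ contribution (plus half of the rapidly decaying non-stationary part); by the oddness of $\psi(s)=s^3/3-s$ and the reality of $\Ai$ the $s=+1$ piece is indeed the complex conjugate, so the decomposition $\Ai=e^{i\zeta}\theta+e^{-i\zeta}\overline\theta$ comes out correctly, and your treatment of $x\ge -1$ (bounded $\zeta$, $\theta=\tfrac12 e^{-i\zeta}\Ai$) coincides with the paper's. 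The paper instead quotes H\"ormander's exact complex--rotation identity $\Ai(x)=-\omega_1\Ai(\omega_1x)-\omega_2\Ai(\omega_2x)$ together with the representation $\Ai(\omega_2x)=\exp(2i|x|^{3/2}/3)(2\pi)^{-1}\int\exp(-\xi^2\sqrt{|x|}\,\omega_4+i\xi^3/3)\,d\xi$, which hands over $\theta$ in closed form as an absolutely convergent, Gaussian--damped integral; the symbol bounds $|d_x^k\theta|\le C_k(1+|x|)^{-k-1/4}$ then follow by differentiating under the integral sign, with no oscillation left to fight. That is exactly the step where your approach has to work hardest: an asymptotic series cannot in general be differentiated termwise, so you must treat $\theta$ as the actual function given by the cutoff integral and check that each $\partial_x$ applied to $e^{-i\zeta(x)}\cdot(\chi_-\hbox{-piece})$ produces an amplitude vanishing at the critical point $s=-1$ (the cancellation $it-i\zeta'(x)=i|x|^{1/2}(s+1)$), which upgrades the naive factor $|x|^{1/2}$ to a gain of $|x|^{-1}$ via the next order of the stationary--phase expansion. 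You identify this as the technical crux and the mechanism you describe is the right one, so the plan is sound; the paper's choice of the H\"ormander representation simply buys these derivative estimates for free at the cost of citing a less elementary identity, while your version is self-contained modulo careful integration--by--parts bookkeeping (including the regularization of the non-absolutely-convergent Fourier integral on the support of $\chi_0$).
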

\begin{proof}
Function \Ai(x) is an entire analytic function of $x\in \mathbb{C}$ and
by \cite[(7.6.18), Page  214]{Ho1}
 for all $x\in \mathbb{C}$
 \begin{eqnarray*}
\Ai(x)=-\omega_1\Ai(\omega_1x)-\omega_2\Ai(\omega_2x)
\end{eqnarray*}
where $\omega_1=e^{i\pi/3}=\frac{-1}{2}+\frac{\sqrt{3}}{2}i$ and
$\omega_2=\omega_1^2 =\frac{-1}{2}-\frac{\sqrt{3}}{2}i$. For $x<-1$, by
 \cite[(7.6.19), Page  214]{Ho1},
\begin{eqnarray*}
\Ai(\omega_1x)=\exp(-2i|x|^{3/2}/3)(2\pi)^{-1}\int_{-\infty}^\infty
\exp({-\xi^2\sqrt{|x|}\omega_3+i\xi^3/3}) d\xi
\end{eqnarray*}
and
\begin{eqnarray*}
\Ai(\omega_2x)=\exp(2i|x|^{3/2}/3)(2\pi)^{-1}\int_{-\infty}^\infty
\exp(-\xi^2\sqrt{|x|}\omega_4+i\xi^3/3)d\xi
\end{eqnarray*}
where $\omega_3=-\overline{\omega_1}$ and
$\omega_4=\overline{\omega_2}$. Thus for $x<-1$,
\eqref{ai-asymp} holds with
$$\theta(x)=-\omega_2(2\pi)^{-1}\int_{-\infty}^\infty
\exp({-\xi^2\sqrt{|x|}\omega_4+i\xi^3/3})d\xi.$$

For $x\geq -1$, set ${\zeta}(x)=1/(1+x^2)$ and
$\theta(x)=e^{-i/(1+x^2)}\Ai(x)/2$. By  \cite[(7.6.20), Page
215]{Ho1} function $\Ai$ and all its derivatives decay   exponentially when $x$ tends to $+\infty$.   This finishes the proof of Lemma~\ref{prop7.1}.   \end{proof}

The next statement is a standard oscillatory integral type estimates.

\begin{theorem}\label{osc-int}
Suppose that  $\psi \in C^{\infty}({\mathbb R})$ is a real valued  function and that   $u
\in C^{\infty}_c(V)$, where $V$ is a closed subset of
$\mathbb{R}$. Then for each positive integer $l>0$ there exists a positive
constant $C_l$ such that for $\lambda>0$
$$
\left|\int \exp(i\lambda\psi(t))u(t)dt\right| \leq C_l\sum_{k=0}^{l}
\sup_V|u^{(k)}||\psi'|^{k-2l}\lambda^{-l}.
$$
The constant $C_l$ in the above estimate is bounded when $\psi$ stays in a bounded
set in $C^{l+1}(V)$.
\end{theorem}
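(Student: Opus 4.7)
The approach is the classical non-stationary phase argument by iterated integration by parts. Introduce the first-order operator $Lf = (i\lambda\psi')^{-1}f'$, which is designed so that $L(e^{i\lambda\psi(t)}) = e^{i\lambda\psi(t)}$ wherever $\psi'$ is nonzero. The right-hand side of the claimed inequality is only meaningful when $\psi'$ does not vanish on $\mathrm{supp}\, u$, so I may assume that throughout. The formal adjoint is $L^\ast u = -(i\lambda)^{-1}\partial_t(u/\psi')$. Since $u\in C^\infty_c(V)$ there are no boundary terms, and iterating $l$ times gives
$$\int e^{i\lambda\psi(t)}u(t)\,dt = \int e^{i\lambda\psi(t)}(L^\ast)^l u(t)\,dt.$$

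The heart of the argument is an induction on $l$ establishing the identity
$$(L^\ast)^l u(t) = \lambda^{-l}\sum_{k=0}^{l}\frac{P_{l,k}(\psi''(t),\ldots,\psi^{(l+1)}(t))}{\psi'(t)^{2l-k}}\,u^{(k)}(t),$$
where each $P_{l,k}$ is a polynomial with absolute-constant coefficients in the higher derivatives $\psi^{(j)}$ for $2\le j\le l+1$. The base case $l=0$ is trivial. For the inductive step, applying one more copy of $L^\ast$ to a term of type $u^{(k)}\psi'^{-(2l-k)}$ produces, via the quotient rule, a contribution with $u^{(k+1)}$ and exponent $2l-k+1 = 2(l+1)-(k+1)$ on $1/\psi'$, plus a contribution with $u^{(k)}$ and exponent $2l-k+2 = 2(l+1)-k$, together with a new factor of $\psi''$ or the derivative of an existing $\psi^{(j)}$. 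Both terms are exactly compatible with the inductive form at level $l+1$.

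Taking absolute values in this identity and using that $\psi^{(j)}$ for $2\le j\le l+1$ is uniformly bounded whenever $\psi$ stays in a bounded subset of $C^{l+1}(V)$ yields the pointwise estimate
$$|(L^\ast)^l u(t)| \le C_l\lambda^{-l}\sum_{k=0}^{l}|u^{(k)}(t)||\psi'(t)|^{k-2l}.$$
Integrating over the compact set $\mathrm{supp}\, u$ and absorbing its Lebesgue measure into $C_l$ gives the desired bound, with the constant remaining uniform as $\psi$ varies over a bounded set in $C^{l+1}(V)$.

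The only genuinely nontrivial step is the combinatorial bookkeeping in the induction, namely verifying that the power of $1/\psi'$ in the $u^{(k)}$ term is exactly $2l-k$ and that the coefficients $P_{l,k}$ do not involve $\psi^{(j)}$ for $j>l+1$. This is straightforward but must be tracked carefully; every other ingredient (integration by parts without boundary terms, uniformity of the constant) is immediate from the hypotheses on $u$ and $\psi$.
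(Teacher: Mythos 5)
Your overall strategy is the right one and is essentially the standard proof of the non-stationary phase lemma; the paper itself offers no argument here, simply citing H\"ormander's Theorem 7.7.1, so reconstructing that proof is exactly what is called for. However, the inductive identity you single out as ``the only genuinely nontrivial step'' is false as stated: the power of $1/\psi'$ multiplying $u^{(k)}$ in $(L^\ast)^l u$ is \emph{not} always exactly $2l-k$. In your inductive step the $t$-derivative can land on three things, not two: on $u^{(k)}$ (raising $k$ by one, exponent $2(l+1)-(k+1)$, fine), on the power of $\psi'$ (raising the exponent by two in total, giving $2(l+1)-k$ with a new factor $\psi''$, fine), or on the polynomial coefficient $P_{l,k}$ itself. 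In that last case $k$ is unchanged but the exponent increases only by one, to $2(l+1)-k-1$, which does not match your ansatz. Concretely, already at $l=2$ the expansion contains the term $u\,\psi'''/(\psi')^{3}$: here $k=0$ but the exponent is $3$, not $4$.

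The correct bookkeeping (this is what H\"ormander does) is that $(L^\ast)^l u$ is a linear combination of terms
$$
\lambda^{-l}\,\frac{u^{(k)}\,\psi^{(a_1)}\cdots\psi^{(a_r)}}{(\psi')^{\,l+r}},\qquad a_j\ge 2,\quad k+\sum_{j=1}^{r}(a_j-1)=l,
$$
which forces $a_j\le l+1$ and $l+r\le 2l-k$, with equality precisely when every $a_j$ equals $2$. To pass from this to the claimed bound one must absorb the deficit $|\psi'|^{\,2l-k-(l+r)}=|\psi'|^{\sum_j(a_j-2)}$, which is legitimate because $|\psi'|$ (like the higher derivatives $\psi^{(a_j)}$) is bounded on $\operatorname{supp}u$ uniformly for $\psi$ in a bounded subset of $C^{l+1}(V)$. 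This extra observation is genuinely needed and is absent from your write-up: when $|\psi'|>1$ one cannot simply replace a smaller negative power of $|\psi'|$ by a larger one. With the corrected identity and this remark, the remaining steps of your argument (absence of boundary terms, integration over the compact support, uniformity of $C_l$ over bounded sets in $C^{l+1}(V)$) go through as you describe.
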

\begin{proof}
Theorem \ref{osc-int} is a special one-dimensional case of
\cite[Theorem 7.7.1]{Ho1} under additional assumption that function $\psi$ is real valued.
\end{proof}

In the next statement we will describe estimates for the
kernel $K_{w({\A})}(x,y)= w({\A})\delta_y(x)$ of the spectral multiplier  $w(\A)$ which play
crucial role in the proof of our main result.

\begin{proposition}\label{A-est}
Let $w\in C^\infty_c(\R) $ be a smooth function such that $\supp w
\subset [-a, a]$. For all $k\in {\N}$ choose constants $C_{k}>0$ in
such a way that
\begin{eqnarray}\label {con_w}
  |d_x^k w|(x) \leq {C}_{k}a^{-k}
\end{eqnarray}
and $C_k$ do not depend on $a$.

\noindent Then

\begin{enumerate}
\item[A)]
 For all $x\in \R$
and $y$ satisfying $a \geq \min(1, |y|^{-1/2})$,
\begin{equation}\label{abig}
|K_{w({\A})}(x,y)|= |w({\A})\delta_y|(x) \leq C'_{l}\frac{d^{-1}}{(1 + |x -
y|/d)^{l}}\left(1+\frac{|y|}{1+|x|}\right)^{\frac{1}{4}}
\end{equation}
where $d = \max(a^{-1/2}, |y|^{1/2}/a)$ and $C'_{l}$ just depends on
the constants ${C}_{k}$ in \eqref{con_w}  and $l$,  but not on $a,
x$ and $y$.

\item[B)]
 For all $x\in\R$ and $y$ satisfying  $a \leq \min(1,
|y|^{-1/2})$,
\begin{equation}\label{asmall}
|K_{w({\A})}(x,y)|=|w({\A})\delta_y|(x) \leq C''_l  \frac{a}{(1+ a^{2}|x|)^{l}}
(1 + |y|)^{-1/4}(1 + |x|)^{-1/4}
\end{equation}
where $C''_{l}$ just depends on ${C}_{k}$ in \eqref{con_w} and $l$,
but not on $a, x$ and $y$.
\end{enumerate}
\end{proposition}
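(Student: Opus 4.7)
The plan is to reduce everything to one-dimensional oscillatory integrals and invoke Theorem \ref{osc-int}. By Lemma \ref{poz}, the kernel has the explicit representation
$$K_{w(\A)}(x,y)=\int_\R w(\lambda)\Ai(x-\lambda)\Ai(y-\lambda)\,d\lambda.$$
Substituting the expansion $\Ai(t)=e^{i\zeta(t)}\theta(t)+e^{-i\zeta(t)}\overline{\theta}(t)$ from Lemma \ref{prop7.1} splits this into four oscillatory pieces with phases $\pm\zeta(x-\lambda)\pm\zeta(y-\lambda)$. Since $\Ai$ decays exponentially on $[-1,\infty)$ and $\supp w\subset[-a,a]$, only the region where both $x-\lambda$ and $y-\lambda$ lie below $-1$ contributes non-negligibly; there $\zeta(t)=\tfrac{2}{3}|t|^{3/2}$, so the four phase derivatives in $\lambda$ are $\pm|x-\lambda|^{1/2}\pm|y-\lambda|^{1/2}$. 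The two equal-sign combinations are bounded below by $|x|^{1/2}+|y|^{1/2}$ and yield very fast decay, while the two opposite-sign combinations are of order $|x-y|/(|x|^{1/2}+|y|^{1/2})$ and produce the leading behavior captured by the factor $(1+|x-y|/d)^{-l}$ in \eqref{abig}.

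For part (A), where $a\geq\min(1,|y|^{-1/2})$, I would apply Theorem \ref{osc-int} with large $l$ to each of the four pieces. The hypotheses $|w^{(k)}|\leq C_k a^{-k}$ from \eqref{con_w} together with the derivative estimate $|\theta^{(k)}(t)|\leq C_k(1+|t|)^{-k-1/4}$ from Lemma \ref{prop7.1} keep all amplitude derivatives under control. Tracking how $|\psi'|^{-1}$ (gained at each integration by parts) combines with the amplitude derivative scale $a^{-1}$ and with the size $a$ of $\supp w$ identifies the natural kernel wavelength as $d=\max(a^{-1/2},|y|^{1/2}/a)$, which is the longer of the scales arising from the equal- and opposite-sign phase groups. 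After one power of $d^{-1}$ has been extracted as the overall prefactor, the residual Airy amplitudes $(1+|x-\lambda|)^{-1/4}(1+|y-\lambda|)^{-1/4}$ evaluated at $|\lambda|\leq a$ rearrange into the factor $(1+|y|/(1+|x|))^{1/4}$ appearing in \eqref{abig}.

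For part (B), where $a\leq\min(1,|y|^{-1/2})$, the support of $w$ is so short that $\Ai(y-\lambda)$ is essentially constant on it and no oscillatory cancellation is available against the $y$-factor. I would bound $|\Ai(y-\lambda)|\leq C(1+|y|)^{-1/4}$ via Lemma \ref{prop7.1} and then focus on $\int w(\lambda)\Ai(x-\lambda)\,d\lambda$. The base size $a(1+|x|)^{-1/4}$ follows from the sup bound on $\Ai(x-\cdot)$; the polynomial decay $(1+a^2|x|)^{-l}$ emerges from repeated integration by parts against the oscillatory phase $e^{\pm i\zeta(x-\lambda)}$, each step gaining a factor $(a|x|^{1/2})^{-1}=(a^2|x|)^{-1/2}$, so that $2l$ steps produce the claimed $(1+a^2|x|)^{-l}$ in \eqref{asmall}.

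The main technical obstacle will be the careful bookkeeping in the stationary/non-stationary phase analysis for part (A): verifying that the opposite-sign derivative $||x-\lambda|^{1/2}-|y-\lambda|^{1/2}|$ is uniformly comparable to $|x-y|/(|x|^{1/2}+|y|^{1/2})$ on $\supp w$, tracking how the amplitude derivatives of $\theta(x-\cdot)\overline{\theta}(y-\cdot)$ compound with those of $w$ through $l$ successive integrations by parts, and checking that the transition between the two values of $d$ in the expression $\max(a^{-1/2},|y|^{1/2}/a)$ causes no loss in either factor of \eqref{abig}.
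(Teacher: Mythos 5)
Your outline for Part B and for the ``large $|y|$'' portion of Part A matches the paper's argument, but for Part A as a whole there is a genuine gap: the single mechanism you propose (insert the expansion of Lemma \ref{prop7.1} into the $\lambda$-integral of Lemma \ref{poz} and apply Theorem \ref{osc-int} with the phase derivative $\pm|x-\lambda|^{1/2}\pm|y-\lambda|^{1/2}$) cannot produce the claimed bound when $|y|$ is small compared to the support of $w$. Concretely, take $a\geq 1$, $|y|\leq a$, so $d=a^{-1/2}$ and \eqref{abig} asserts decay at the very short scale $|x-y|\sim a^{-1/2}$, i.e.\ an effective oscillation parameter $a^{1/2}|x-y|$. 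But for the opposite-sign phase $\Phi(\lambda)=\zeta(y-\lambda)-\zeta(x-\lambda)$ one has $\Phi'(\lambda)=(x-y)/\bigl((\lambda-x)^{1/2}+(\lambda-y)^{1/2}\bigr)$, which for $x,y$ near $0$ degenerates to $\sim|x-y|\,a^{-1/2}$ as $\lambda$ approaches the endpoint of $\supp w$. So the pointwise lower bound that Theorem \ref{osc-int} requires is off by a full factor of $a$ from what you need, and your ``technical obstacle'' of showing $|\Phi'|$ uniformly comparable to $|x-y|/(|x|^{1/2}+|y|^{1/2})$ on $\supp w$ is simply false in this regime (it only holds when $|x|$ or $|y|$ dominates $a$, which is essentially the paper's condition $|y|>a^4$). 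A dyadic decomposition in $\lambda$ does not rescue this: on the block $\lambda\sim 2^j$ one gets $2^{j/2}(1+|x-y|2^{j/2})^{-N}$, and summing over $j\le\log_2 a$ caps out at $\min(a^{1/2},|x-y|^{-1})$, far weaker than $a^{1/2}(1+a^{1/2}|x-y|)^{-l}$. The missing cancellation lives \emph{across} dyadic scales (after the substitution $\mu=\lambda^{1/2}$ the phase is essentially linear in $\mu$ with frequency $|x-y|$ and the amplitude is a bump of width $a^{1/2}$), and a blockwise non-stationary phase argument destroys it.

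The paper therefore splits Part A into two cases. For $|y|\leq a^4$ it works entirely on the Fourier side: writing $h(\lambda)=w(\lambda)\Ai(y-\lambda)$ one finds $\widehat{w(\A)\delta_y}(\omega)=e^{-iy\omega}g(\omega)$ with $g(\omega)=\int\hat w(t)\exp\bigl(-i(-t\omega^2+t^2\omega-t^3/3-yt)\bigr)dt$, and proves, by oscillatory integral estimates in the $t$ variable (after rescaling $t\mapsto t/a$ and a unit decomposition in $t$), the derivative bounds $|g^{(k)}(\omega)|\leq C_k d^k(1+|\omega^2+y|/a)^{-N}$; integrating in $\omega$ gives $\|g^{(k)}\|_1\lesssim d^{k-1}$ and hence \eqref{abig} (in fact the stronger form without the quarter-power factor). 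Only for $|y|>a^4$ does the paper run your $\lambda$-side oscillatory integral argument, and even there it needs a further case analysis in the relative sizes and signs of $x$ and $y$, including an appeal to the symmetry $K_{w(\A)}(x,y)=K_{w(\A)}(y,x)$ to throw one sub-case back onto the Fourier-side estimate. To repair your proof you would need to add this second, genuinely different mechanism for the small-$|y|$ regime; your Part B sketch, by contrast, is essentially the paper's proof and is fine.
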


We shall prove Parts A) and B) separately.

\begin{proof}[Proof of Part A)]
Recall that in Part A) of Proposition  \ref{A-est} we assume that $a
\geq \min(1, |y|^{-1/2})$. We split the proof of Part A) of
Proposition \ref{A-est} into two cases:
\begin{enumerate}
\item[{\bf I)}] $|y|\leq a^4$;
\item[{\bf II)}] $|y| > a^4$.
\end{enumerate}

\bigskip

 \noindent {\bf {Case I}}: $|y|\leq a^4$.
 In fact our  argument in {\bf Case I}
yields a stronger version of inequality \eqref{abig}
 mainly
\begin{equation}\label{abig1}
 |w({\A})\delta_y|(x) \leq C'_{l}\frac{d^{-1}}{(1 + |x -
y|/d)^{l}}.
 \end{equation}
Note that if $|y|\leq a^4$ then immediately, combing the assumption
$a \geq \min(1, |y|^{-1/2})$, we have $a\geq 1$. Put
$$
h(\lambda) = {\mathcal{T}{(w({\A})\delta_y)}}(\lambda) =
w(\lambda)\Ai(y - \lambda).
$$
Next we calculate the Fourier transform of $h$,
\begin{eqnarray*}
{\hat h}(\omega) &=& \int {\hat w}(t) \exp(-i((\omega - t)^3/3 +
y(\omega - t)))dt\\
 &=&
\exp(-i(\omega^3/3 + y\omega)) \int {\hat w}(t)\exp(-i(-t\omega^2 +
t^2\omega - t^3/3 -yt))dt\\
 &=& \exp(-i(\omega^3/3 +
y\omega))g(\omega),
\end{eqnarray*}
where the last equality defines function $g$.
Note that
\begin{eqnarray*}
\widehat{w({\A})\delta_y}(\omega)=\widehat{\Ai * h}(\omega) =
\hat{\Ai}(\omega) \hat{h}(\omega) = e^{-iy \omega}g(\omega).
\end{eqnarray*}
Hence to prove estimate \eqref{abig1} it is enough to show that
\begin{equation}\label{abig2}
|\hat{g}(x) | \le C'_{l}d^{-1}(1 + |x|/d)^{-l}.
\end{equation}
Recall that  $d = \max(|y|^{1/2}/a, a^{-1/2})$. We make the following claim.

\bigskip

{\it Claim.} If $g$ is a function defined above then
there exist constants $C'_{k}$ such that
\begin{equation}\label{g-est}
| g^{(k)}|(\omega) \leq C'_{k}\frac{d^k}{1 + {|\omega^2 +
y|}/{a}}
\end{equation}
for all $k\in
{\N}$ and $\omega \in \R$.

\bigskip

First we observe that estimate \eqref{abig2} and in fact whole {\bf Case I} follows from \eqref{g-est}. Indeed set $\omega_y=\sqrt{\max (0,-y)}$
and note that
$$
\min(|\omega - \omega_y|^2, |\omega + \omega_y|^2) \leq
|\omega - \omega_y||\omega + \omega_y| \le  |\omega^2 + y|
$$
Hence
\begin{eqnarray*}
\int_{\R} \frac{1}{1 + {|\omega^2 + y|}/{a}}d\omega \le \int_{\R}
\left( \frac{1}{1+|\omega-\omega_y|^2/a}+
\frac{1}{1+|\omega+\omega_y|^2/a}\right)   d \omega = Ca^{1/2}.
\end{eqnarray*}
It follows now from estimates  \eqref{g-est} and the relation $d = \max(|y|^{1/2}/a,
a^{-1/2})\geq a^{-1/2}$ that
$$
\|g^{(k)}\|_{1} \le C C'_{k} d^k a^{1/2} \le C C'_{k} d^{k-1}.
$$
Now the estimates \eqref{abig2} and {\bf Case I} are straightforward
consequence of $L^1$ estimates of derivatives of $g$ stated above.
 Hence to finish the proof of {\bf Case I} it is enough to show estimate
\eqref{g-est}.

\bigskip

{\it Proof of Claim} \eqref{g-est}.
Set
$$
\psi(\omega, t) = a^{-1}(\omega^2 + y)t - a^{-2}\omega t^2 +
a^{-3}t^3/3.
$$
 Substituting $t/a$ for $t$ we have
$$
g(\omega) = \int u(t)\exp(i\psi(\omega, t)) dt
$$
where $u(t) = {\hat w}(t/a)/a$.  Now let $\eta$ be a smooth cutoff
function such that $\supp(\eta) \subset [-1, 1]$ and
$\sum_{j\in\mathbb{Z}} \eta(t - j) = 1$ for all $t \in {\mathbb R}$.
Write $u(t)=\sum_{j \in \Z} u_j(t)=\sum_{j \in \Z}u(t)\eta(t - j)$. Decompose $g$ as
$$
g(\omega)=\sum_{j \in \Z}g_j(\omega)= \sum_{j \in \Z}\int u_j(t) \exp(i\psi(\omega, t)) dt.
$$
Now to prove \eqref{g-est} for $k=0$
it is clearly enough to show that for some   natural  $N_1\ge 2$ and  every  $N_2 \ge 1$
\begin{eqnarray} \label{g_j-est}
|g_j|(\omega)\leq \frac{C_{N_1,N_2}(1+|j|)^{-N_1}}{(1 + |\omega^2 + y|/a)^{N_2}}.
\end{eqnarray}
with constant $C_{N_1,N_2}$ independent of $j$. In fact in the case $k=0$ it is  enough to
consider term $1 + |\omega^2 + y|/a$ instead of $(1 + |\omega^2 + y|/a)^{N_2}$,
but we will have to verify  a bit stronger estimate when we consider the case $k>0$,
see \eqref{der-coef1} below.
Next, assumption \eqref{con_w} on function $w$ and the fact that $\supp\ u_j
\subset [j-1, j+1]$ yields
$$
|d_t^{k}u_j(t)|\leq C_{k,N}(1+|j|)^{-N}
$$
for all $k\in {\N}$.
Hence
$$
 |g_j(\omega)|\leq \int_{j-1}^{j+1} |u_j(t)|dt\leq C_{N}(1+|j|)^{-N} \quad .
$$
Now if $|\omega^2 + y|/a\leq 32(1+|j|)^2$ then \eqref{g_j-est} is a straightforward
consequence of the above estimate so we can assume further on that
$|\omega^2 + y|/a> 32(1+|j|)^2$.

If this is the case we want to estimate $g_j(\omega)$ as an oscillatory integral.
When $|\omega^2 + y|/a> 32(1+|j|)^2$, the following inequalities
hold for all $t\in [j-1,j+1]$,
\begin{equation}\label{coef1}
\left|{\frac{\omega t}{a(\omega^2 + y)}}\right| < 1/4 \quad \mbox{and} \quad
\left|{\frac{t^2}{a^2(\omega^2 + y)}}\right| < 1/4.
\end{equation}
Indeed, since $a \geq 1$ we have
$$
\left|{\frac{t^2}{a^2(\omega^2 + y)}}\right| =\frac{ a^{-3}t^2}{|(\omega^2 +
y)/a|} \leq \frac{(1+|j|)^2}{|(\omega^2 + y)/a|}<1/4.
$$
When $|\omega^2 + y| \geq \omega^2/2$ then
$$
\left|{\frac{\omega t}{a(\omega^2 + y)}}\right|^2 \leq {\frac{|\omega|^2}{|\omega^2 + y|}}
{\frac{(1+|j|)^2}{ |(\omega^2 + y)/a|}} <1/16.$$
 When $|\omega^2 + y| <
\omega^2/2$, then $|y|
> \omega^2/2$, $|\omega| \leq 2|y|^{1/2}$ and
$$
\left|{\frac{\omega t}{ a(\omega^2 + y)}}\right| \leq {\frac{2|y|^{1/2}
(1+|j|)}{a^2(|\omega^2 + y|/a)}}<1/4
$$
where we used inequality $|y|^{1/2}/a^2 \leq 1$. These calculations verify  (\ref{coef1}).

 Write $\psi(\omega, t) = a^{-1}(\omega^2 + y)\psi_1(\omega,
t)$ where
$$
\psi_1(\omega, t) = t - {\frac{\omega} {a(\omega^2 + y)}}t^2
+ {\frac{t^3}{ 3a^2(\omega^2 + y)}}.$$
We have
$$
\partial_t\psi_1(\omega, t) = 1 - 2{\frac{\omega t}{a(\omega^2 + y)}}
+ {\frac{t^2}{ a^2(\omega^2 + y)}}.
$$
Thus by \eqref{coef1}  $\partial_t\psi_1(\omega,
t)
> 1/4$ and all higher derivatives of $\psi_1$ are bounded. Substituting $\psi=\psi_1$, $u=u_j$ and
$\lambda =a^{-1}(\omega^2 + y)$ in  Theorem~\ref{osc-int} yields
estimate~\eqref{g_j-est}. This proves \eqref{g-est} for $k=0$.

\medskip

To
handle $k>0$ note that $\partial_\omega \psi(\omega, t) =
2a^{-1}\omega t - a^{-2} t^2$ and
$$
\partial_\omega^k \exp(i\psi(\omega, t)) =P_k(\omega, t)
\exp(i\psi(\omega, t))
$$
where $P_k(\omega, t)$ is a polynomial such that
$$
|\partial_t^l P_k(\omega, t)| \leq C_{k, l}(|\omega|^ka^{-k} +
a^{-2k} + a^{-k/2})(1+|t|)^{2k}
$$
for all $t \in \R$. In fact, $P_1 = i(2a^{-1}\omega t - a^{-2}
t^2)$, $P_{k+1} = P_1P_k + \partial_\omega P_k$ and one can
inductively prove that $P_k = \sum_{l, j \in N_k}b_{k,l,j}\omega^l
(t/a)^j$ where $N_k$ is set of points with integer coordinates in
the triangle with vertices $(k,k)$, $(0,2k)$, $(0, k/2)$.  To see
that $(i, j)$ is above or on line trough $(k, k)$ and $(0, k/2)$
assign to $\omega^l t^j$ degree $-l/2 + j$ and note that minimal
degree of term in $P_{k+1}$ is bigger by $1/2$ then minimal degree
of term in $P_{k}$.  Considering normal degree $l + j$ we see that
$(l, j)$ is below line trough $(k, k)$ and $(0, 2k)$ which shows
that indeed $P_k$ is of prescribed form.  Now, we estimate each term
of $P_k$ separately using inequality between arithmetic and geometric
mean.

Next
\begin{eqnarray*}
d_\omega^k g_j(\omega)&=& \int_{j-1}^{j+1} u_j(t)
\partial_\omega^k\exp(i\psi(\omega, t))dt
\\
&=& \int_{j-1}^{j+1} P_k(\omega, t)u_j(t)\exp(i\psi(\omega, t))dt.
\end{eqnarray*}

Repeating the argument which we use above to prove \eqref{g_j-est} with $u_j$ replaced by   $P_k(\omega, t)u_j$ yields
$$
|d_\omega^k g_j|(\omega)| \leq C'_{k}(|\omega|^ka^{-k} +
a^{-2k} + a^{-k/2})\frac{(1+|j|)^{-2}}{(1 + |\omega^2 + y|/a)^{N_2}}.
$$
Since $a^{-1/2} \leq d$ and $a \geq 1$ so  $a^{-2k} \leq a^{-k/2} \leq
d^k$. Hence if
\begin{equation}\label{der-coef1}
(|\omega|/a)(1 + |\omega^2 + y|/a)^{-1/2} \leq Cd
\end{equation}
then claim (\ref{g-est}) is satisfied for all $k$.

To see that (\ref{der-coef1}) holds  we consider two cases.  When
$|\omega^2 + y| \geq \omega^2/2$, then
$$
|\omega|a^{-1}(1 + |\omega^2 + y|/a)^{-1/2} \leq
|\omega|a^{-1}(\omega^2/(2a))^{-1/2} = 2^{1/2}a^{-1/2}.$$
When $|\omega^2 + y| < \omega^2/2$, then $|\omega| \leq 2|y|^{1/2}$
and
$$
|\omega|a^{-1} \leq 2|y|^{1/2}/a \leq 2d
$$
so indeed (\ref{der-coef1}) holds. 
This ends the proof of estimates \eqref{g-est} and   {\bf Case I}.

\bigskip

\noindent {\bf {Case II}}: $|y| > a^4$. In this case, $|y|\geq 1$,
$|y|\geq a^2$ so  $d =|y|^{1/2}/a\geq 1$.  Recall that  by
Lemma~\ref{poz}
$$
(w({\A})\delta_y)(x)=\int w(s)\Ai(y-s)\Ai(x- s)ds.
$$
We further split {\bf {Case II}} into four sub-cases.

\medskip

{\it Case (i)}:  $y < -a - 1$ and $x < - a - 1$ then  by equality
(\ref{ai-asymp})
\begin{eqnarray*}
(w({\A})\delta_y)(x)
&=& \int \exp\Big[i({\zeta}(y - s) + {\zeta}(x - s))\Big]w_1(s)\theta(x - s)ds\\
&&+\int\exp\Big[i({\zeta}(y - s) - {\zeta}(x -
s))\Big]w_1(s){\overline \theta}(x -
s)ds\\
&& + \int\exp\Big[-i({\zeta}(y - s) + {\zeta}(x -
s))\Big]w_2(s){\overline \theta}(x -
s)ds\\
&& + \int\exp\Big[-i({\zeta}(y - s) - {\zeta}(x - s))\Big]w_2(s)\theta(x - s)ds\\
& =& I_1 + I_2 + I_3 + I_4,
\end{eqnarray*}
where $w_1(s) = w(s)\theta(y - s)$ and $w_2(s) = w(s){\overline
\theta}(y - s)$.

 We will only estimate   $I_2$ because the  proofs
for the other integrals are similar. We write
\begin{eqnarray*}
I_2 &=& \int \exp\Big[i({\zeta}(y - s) - {\zeta}(x -
s))\Big]w_1(s){\overline \theta}(x - s)ds
\\
&=& a\int \exp\Big[i({\zeta}(y - at) - {\zeta}(x - at))\Big]u(t,
x)dt
\end{eqnarray*}
where $u(t, x) = w(at)\theta(y-at){\overline \theta}(x - at)$. We
estimate $I_2$ again using the approach of oscillatory integrals.

\medskip

Next we consider three ranges of variable $x,y\in \R$: $|x|> 2|y|$,
$|y|/2\leq |x|\leq 2|y|$ and $|x| \le  |y|/2$. If $|x|> 2|y|$ then
by Lemma~\ref{prop7.1} and assumption  \eqref{con_w},
\begin{eqnarray*}
|\partial_t^{k}u(t, x)| &\leq& C'_k (1+|x-at|)^{-1/4}(1+|y-at|)^{-1/4}
\leq C'_k|y|^{-1/2}
\end{eqnarray*}
where we use the facts that,  $\supp(u(\cdot, x)) \subset [-1, 1]$
so we can assume that $|t| \le 1$; and that if $a\leq 2$ then
$|y|>1+a\geq3a/2$, otherwise  if $a>2$ then  $|y|>a^4>2a$. Now  if $|x -
y|a|y|^{-1/2} \leq 1$ then
$$
I_2 \leq Ca|y|^{-1/2}\leq C a|y|^{-1/2}C'_l(1 + |x -
y|a|y|^{-1/2})^{-l} \leq C'_l d^{-1}(1 + |x - y|/d)^{-l}.
$$
Thus we can assume $|x - y|a|y|^{-1/2} > 1$. In addition in the
consider range of $x,y$ we have ${\zeta}(z)=2(-z)^{3/2}/3$
 so
\begin{eqnarray*}
\partial_t({\zeta}(y - at) - {\zeta}(x - at))
= a((-y + at)^{1/2} - (-x  + at)^{1/2}).
\end{eqnarray*}
Then absolute value of the derivative is bounded below by $c(|x -
y|a|y|^{-1/2})^{1/2}$ since $|y|\geq a$, $a\geq |y|^{-1/2}$ and
$|x|\sim |x-y|$. Also note that by $|y| \geq a^4$, if $|x -
y|a|y|^{-1/2}
> 1$, $|x - y| > |y|^{1/2}/a \geq a$. Next we observe that higher
derivatives of the phase function are uniformly bounded by constant
$C(|x -y|a|y|^{-1/2})^{1/2}$. The lower bounds for derivative
verified above shows that  we can  choose $\lambda \sim (|x
-y|a|y|^{-1/2})^{1/2}$ in a  such way that $\partial_t({\zeta}(y -
at) - {\zeta}(x - at))/ \lambda \ge 1$. Substituting $\psi =
({\zeta}(y - at) - {\zeta}(x - at))/ \lambda$ in
Theorem~\ref{osc-int} yields
\begin{eqnarray*}
I_2 &\leq& a|y|^{-1/2}C'_l(1 + |x - y|a|y|^{-1/2})^{-l} \leq C'_l
d^{-1}(1 + |x - y|/d)^{-l}.
\end{eqnarray*}

\medskip
 For the range
$|y|/2\leq |x|\leq 2|y|$, we use the similar argument as above. For
the function $u$, we still have
\begin{eqnarray*}
|\partial_t^{k}u(t, x)| &\leq& C'_k
(1+|x-at|)^{-1/4}(1+|y-at|)^{-1/4} \leq C'_k|y|^{-1/2}.
\end{eqnarray*}
However we have to modify required estimates for the phase function.
As before, we can assume $|x - y|a|y|^{-1/2} > 1$ and in the
consider range of $x,y$ we have
\begin{eqnarray*}
\partial_t({\zeta}(y - at) - {\zeta}(x - at))
= a((-y + at)^{1/2} - (-x  + at)^{1/2}).
\end{eqnarray*}
Then absolute value of the derivative is bounded below by $c|x -
y|a|y|^{-1/2}$ since $|y|\geq a$, $a\geq |y|^{-1/2}$ and $|x|\sim
|y|$. Next we observe that higher derivatives of the phase function
are uniformly bounded by  constant $C|x -y|a|y|^{-1/2}$ since
$|y|\geq a$, $|x|\sim |y|$ and $|x - y|a|y|^{-1/2} > 1$. The lower
bounds for derivative verified above shows that  we can choose
$\lambda \sim |x -y|a|y|^{-1/2}$ in a  such way that
$\partial_t({\zeta}(y - at) - {\zeta}(x - at))/ \lambda \ge 1$.
Substituting $\psi = ({\zeta}(y - at) - {\zeta}(x - at))/ \lambda$
in Theorem~\ref{osc-int} yields
\begin{eqnarray*}
I_2 &\leq& a|y|^{-1/2}C'_l(1 + |x - y|a|y|^{-1/2})^{-l} \leq C'_l
d^{-1}(1 + |x - y|/d)^{-l}.
\end{eqnarray*}

\medskip
For the range $|x|\leq |y|/2$ we use the same argument as before
with the same phase function  $\psi = ({\zeta}(y - at) - {\zeta}(x -
at))/ \lambda$. However we have to modify required estimates for
$\lambda$ and the function $u$.

 If $|x|\leq |y|/2$, $a\geq 2$ and $|x|< 2a$, then note that
 $a\geq 2> \min\{1, |x|^{-1/2}\}$, $|x|<2a\leq a^4$ and
 $(w({\A})\delta_y)(x)=(w({\A})\delta_x)(y)$. By the result of
{\bf {Case I}}, we have that for $d'=\max\{a^{-1/2},
|x|^{1/2}/a\}\leq 1$,
\begin{eqnarray*}
(w({\A})\delta_y)(x)=(w({\A})\delta_x)(y)\leq
C'_ld'^{-1}(1+|x-y|/d')^{-l-1}.
\end{eqnarray*}
Note that $d\geq 1$, $d'\leq 1$, $a\geq 2$, $|y|\geq 1$ and
$|x-y|\sim |y|$, then
\begin{eqnarray*}
(w({\A})\delta_y)(x)\leq C'_la |x-y|^{-1}(1+|x-y|/d')^{-l} \leq
C'_la|y|^{-1/2}(1+|x-y|/d)^{-l}.
\end{eqnarray*}

   If $|x|\leq |y|/2$, $a\geq 2$ and $|x|\geq 2a$, we get
$$
|\partial_t^{k}u(t, x)| \leq C'_k|y|^{-1/4}.
$$
and $\supp(u(\cdot, x)) \subset [-1, 1]$. Note that $|x|\leq |y|/2$
implies that $|x-y|\sim |y|$. Thus the absolute value of the
derivative of the phase function is bounded below by $ca|y|^{1/2}$
and the higher derivative of the phase function is bounded by
$Ca|y|^{1/2}$. Then as before, choosing $\lambda \sim a|y|^{1/2}$,
by Theorem~\ref{osc-int}
\begin{eqnarray*}
I_2 &\leq& a|y|^{-1/4}C'_l(a|y|^{1/2})^{-l-1}\\
&\leq&C'_l a|y|^{-1/2}(a|y|^{1/2})^{-l}a^{-1}|y|^{-1/4}\\
&\leq&C'_l a|y|^{-1/2}(1+a|y|^{1/2})^{-l}\\
&\leq&C'_l a|y|^{-1/2}(1+a|x-y||y|^{-1/2})^{-l}\\
&\leq&C'_l d^{-1}(1+|x-y|/d)^{-l}.
\end{eqnarray*}
If $|x|\leq |y|/2$ and $a< 2$, we get $|x|\geq 3a/2$ and thus
$$
|\partial_t^{k}u(t, x)| \leq C'_k|y|^{-1/4}(1+|x|)^{-1/4}
$$
and $\supp(u(\cdot, x)) \subset [-1, 1]$. Note that $|x|\leq |y|/2$
implies that $|x-y|\sim |y|$. Thus the absolute value of the
derivative of the phase function is bounded below by $ca|y|^{1/2}$
and the higher derivative of the phase function is bounded by
$Ca|y|^{1/2}$. Then as before, choosing $\lambda \sim a|y|^{1/2}$,
by Theorem~\ref{osc-int}
\begin{eqnarray*}
I_2 \leq a|y|^{-1/4}(1+|x|)^{-1/4}C'_l(a|y|^{1/2})^{-l}
&\leq&C'_l a|y|^{-1/2}(1+a|x-y||y|^{-1/2})^{-l}\frac{|y|^{1/4}}{(1+|x|)^{1/4}}\\
&\leq&C'_l
d^{-1}(1+|x-y|/d)^{-l}\Big(\frac{|y|}{1+|x|}\Big)^{\frac{1}{4}}.
\end{eqnarray*}

\medskip

{\it Case (ii)}: $y \geq -a - 1$ and $x < - a - 1$. For
$|s|\leq a$, when $a<2$, $y-s\geq -a-1-a\geq-5$ and so all
derivatives of $\Ai(y-s)$ are exponential decay. Thus for all
$k,l,l'\in {\N}$
\begin{eqnarray}
|\partial^k_s(w(s)\Ai(y-s))|&\leq& \sum_{m\leq k}C_{m,l}a^{-m}(1 +
a^{-1}|s|)^{-l}(1+|y-s|)^{-l'}\nonumber\\
&\leq& C'_{k,l}a^{-k}(1 +
a^{-1}|s|)^{-l}(1+|y|)^{-l'}\label{C.11Aprl_wAi1}.
\end{eqnarray}
When $a\geq 2$, by $|y|>a^4$, $y$ must be bigger than $0$ and
$y-s>a^4-a>2a>0$. Thus all derivatives of $\Ai(y-s)$ are exponential
decay. Then for all $k,l,l'\in {\N}$
\begin{eqnarray}
 |\partial^k_s(w(s)\Ai(y-s))|&\leq&
\sum_{m\leq k}C_{m,l}a^{-m}(1 +
a^{-1}|s|)^{-l}(1+|y-s|)^{-(l'+k-m)}\nonumber\\
&\leq& \sum_{m\leq k}C_{m,l}a^{-m}(1+|y-s|)^{m-k}(1 +
a^{-1}|s|)^{-l}(1+|y-s|)^{-l'}\nonumber\\
&\leq& C'_{k,l}a^{-k}(1 +
a^{-1}|s|)^{-l}(1+|y|)^{-l'}\label{C.11Aprl_wAi2}.
\end{eqnarray}
When $|y|>|x|/2$, by estimates~\eqref{C.11Aprl_wAi1},
\eqref{C.11Aprl_wAi2} and $d\geq 1$
\begin{eqnarray*}
|(w({\A})\delta_y)(x)| &=&|\int w(s)\Ai(y-s)\Ai(x - s)ds|\\
&\leq&2a C(1+|y|)^{-l'}\\
&\leq&Ca |y|^{-1/2}(1+|x-y|)^{-l}\\
&\leq&Cd^{-1}(1+|x-y|/d)^{-l}.
\end{eqnarray*}
When $|y|\leq |x|/2$  by (\ref{ai-asymp})
\begin{eqnarray}\label{likeosc}
(w({\A})\delta_y)(x) &=&\int w(s)\Ai(y-s)\Ai(x - s)ds\\  \nonumber
&=&\int h(s)\theta(x-s)e^{i(2((-x  + s)^{3/2})/3)}ds\\  \nonumber
&&+\int h(s)\bar\theta(x-s)e^{-i(2((-x  + s)^{3/2})/3)}ds\\
\nonumber &=&a\int h(at)\theta(x-at)e^{i(2((-x  + at)^{3/2})/3)}dt\\
\nonumber &&+a\int h(at)\bar\theta(x-at)e^{-i(2((-x  +
at)^{3/2})/3)}dt.  \nonumber
\end{eqnarray}
For function $h(at)\theta(x-at)$ or $h(at)\bar\theta(x-at)$, by
estimates~\eqref{C.11Aprl_wAi1}, \eqref{C.11Aprl_wAi2} and
Lemma~\ref{prop7.1}, all the derivatives are bounded by $
C'_k|y|^{-1/2}. $ For the phase function $2((-x  + at)^{3/2})/3$,
since $|y|\geq a$ and $|x|\geq2|y|\geq 2a$,   $\partial_t(2((-x +
at)^{3/2})/3) = a((-x + at)^{1/2})$ is bounded below by
$ca|x|^{1/2}$ and all higher derivatives are bounded by
$Ca|x|^{1/2}$. Then  by Theorem~\ref{osc-int} and $a|x|^{1/2}\geq
a^{1/2}|y|^{-1/4}|x-y|^{1/2}$,
\begin{eqnarray*}
(w({\A})\delta_y)(x)&\leq&aC'_l|y|^{-1/2}(a|x|^{1/2})^{-l}\\
&\leq&C'_la|y|^{-1/2}(1+a|y|^{-1/2}|x-y|)^{-l/2}\\
&\leq&C'_ld^{-1}(1+|x-y|/d)^{-l/2}.
\end{eqnarray*}

\medskip


 For  {\it Case (iii)}: $y
\geq -a - 1, x \geq - a - 1$, when $|y|>|x|/2$, the proof is similar
to that in the situation $|y|>|x|/2$ of {\it Case (ii)}; when
$|y|\leq |x|/2$, because $|x|\geq 2|y|>2a^4$, the estimate
\eqref{abig} follows from that both $\Ai(x-s)$ and $\Ai(y-s)$ decay
exponentially .

For {\it Case (iv)}: $y < -a - 1, x \geq - a - 1$, when $|x|>|y|/2$,
the proof is similar to that in the situation $|y|>|x|/2$ of {\it
Case (ii)}; when $|x|\leq |y|/2$, $a>2$ and $|x|\leq 2a$, the proof
is similar to that in the situation $|x|\leq |y|/2$, $a\geq 2$ and
$|x|\leq 2a$ of {\it Case (i)}; for the other situations, the proof
is similar to that in the situation $|y|\leq |x|/2$ of {\it Case
(ii)}.

\end{proof}

Next we discuss  the proof of Part B of Proposition  \ref{A-est}.

\begin{proof}[Proof of Part B)]
Recall that in Part B)  of Proposition  \ref{A-est} we assume that  $a \leq \min(1, |y|^{-1/2})$. It is not difficult to notice that for $x \ge -2 $ estimate \eqref{asmall} is straightforward
consequence of exponential decay of the Airy function for positive argument.
Hence we only consider $x \le -2$.

Note that if
$|s|\leq a$ then  $|\Ai(y-s)|\leq C (1+|y|)^{-1/4}$ and otherwise $w(s)=0$.
Next, in the considered case
$a\leq 1$ and $ |y|^{1/2}\leq a^{-1}$ so it follows from  Lemma \ref {prop7.1}  that
\begin{eqnarray*}
|\partial_s\Ai(y-s)|&\leq& C (1+|y-s|^{1/2})(1+|y-s|)^{-1/4}\leq
C\max\{1, |y|^{1/2}\}(1+|y|)^{-1/4}\\
&\leq&Ca^{-1}(1+|y|)^{-1/4}
\end{eqnarray*}
for all $|s|\leq a$.
Inductively, using 
the defining relation   $\Ai''(x)=x\Ai(x)$,
we get
$$
|\partial^k_s\Ai(y-s)|\leq Ca^{-k}(1+|y|)^{-1/4}\quad  \forall_{|s|\leq a}.
$$
Now it follows from assumptions on $w$ (that is $\supp w \subset [-a,a]$ and
 \eqref{con_w}) that
 the function $h(s)=w(s)\Ai(y-s)$ satisfies the estimate
$$
 |\partial^k_s(h(s))|
\leq C'_{k,l}a^{-k}(1 + a^{-1}|s|)^{-l}(1+|y|)^{-1/4}  .
$$
 Using the above inequality, then writing 
\begin{eqnarray*}
w(A)\delta_y(x)=\int w(s)Ai(y-s)  Ai(x-s) ds
=\int w(s)Ai(y-s)\theta(x-s) e^{i(2(-x+s)^{3/2}/3)} ds\\+ \int w(s)Ai(y-s)\bar{\theta}(x-s) e^{-i(2(-x+s)^{3/2}/3)} ds
\end{eqnarray*}
and setting $u(t)=w(s)Ai(y-s)\theta(x-s)$ or $u(t)= w(s)Ai(y-s)\bar{\theta}(x-s)$ 
  yield
 estimate \eqref{asmall} by oscillatory integrals argument of Theorem \ref{osc-int}.

\end{proof}


\section{Proof of Theorem \ref{main}}\label{sec6}


This section is entirely devoted to the proof of Theorem \ref{main}.
Because the argument is rather complex we divide it into several
steps formulated as separate statements. First in Lemmas
\ref{G1-neg} and \ref{G2-est} we split the multiplier $F$ into large
but analytic part $G$ and small but rough part $H$. Next in Lemma
\ref{G2-L1} we estimate $L^1\to L^1 $ norm of ${H}(\LL/\lambda)$ and in
Lemma~\ref{G2-L4/3} we discuss  $L^{4/3} \to L^{4/3}  $ norm of the same operator.
To obtain  these $L^{4/3}$ estimates we ``~interpolate" between
$L^1$ and $L^2$ but the interpolation argument is not standard.
Later we use a similar interpolation trick in the proof of $L^{4/3}$
estimates for ${G}(\LL/\lambda)$ in Lemma~\ref{Lam3/4}. In Lemma
\ref{G1-decomp} we describe further wavelet like decomposition of
the nice (analytic) part $G$. This decomposition allow us to apply
estimates for the Airy operator which we obtained in Section
\ref{sec5} to study the multiplier ${G}(\LL/\lambda)$, see Lemma
\ref{gkm-est} below. In fact a bit earlier in Lemma \ref{AL}, based
on the finite speed propagation property for the wave equation,  we
show that in crucial part of our argument we can replace multiplier
${G}(\LL/\lambda)$ by the multipliers corresponding to the Airy
operator.

\medskip

Our aim is to investigate bounds for  $\|F(\LL/\lambda)\|_{p \to p}$ for $1\leq p\leq 2$ and for large $\lambda$.
For $p\geq 2$, we use duality  and for small $\lambda$ any required estimates hold, see the discussion before Lemma 3.5.
We want to estimate  the kernel $F(\LL/\lambda)\delta_y$, where $F\in H^s$,
$s > 1/2 $ and $\supp F \subset [1/2, 1]$.  By Proposition \ref{I/4} 
we know that $\|F(\LL/\lambda)I_{\lambda/4}\|_{p\to p } \leq
C\|F\|_{H^{1/2+\epsilon}}$ for all $1\le p \le 2$ so we can assume that   $|y| > \lambda/4$.
It follows also from the obvious symmetry of the considered operator $\LL$ that we can also assume  that $y> \lambda/4>0$ without loss of generality.
In addition it follows from Lemma~\ref{2I}
that in the proof  we only need to estimate the norm of the restricted operator 
$\|I_{2\lambda}F(\LL/\lambda)I_{2\lambda}\|_{p\to p}$ for $1\leq p\leq 2$.

\medskip

We write
$$
F(\LL/\lambda) = {\tilde F}(\sqrt{{\LL}/\lambda}) = {\tilde
F}(\lambda^{-1/2}\sqrt{{\LL}})
$$
where ${\tilde F}(x) = F(x^2)$.  Let $\psi$ be a function such
that ${\hat \psi}$ is smooth, $\supp {\hat \psi} \subset [-1, 1]$,
${\hat \psi} = 1$ on $[-1/2, 1/2]$, $0 \leq {\hat \psi} \leq 1$,
and ${\hat \psi}$ is symmetric.  Next for $h>0$ we set  $\psi_h(x) = h\psi(hx)$ and we define
function ${\tilde G}$ in the following way
$$
{\tilde G} = {\tilde F}*\psi_{\lambda^{3/2}/6}.
$$
Note that
$$
\|{\tilde G}\|_{H^s} \leq C \|{\tilde F}\|_{H^s} \leq C\|F\|_{H^s}
$$
and
$$
\|{\tilde F} - {\tilde G}\|_{H^s}
\leq C\|{\tilde F}\|_{H^s} \leq C\|F\|_{H^s}
$$
Now we define functions $F$ and $G$ by the following formula
\begin{equation}\label{GH}
{G}(x) = {\tilde G}(\sqrt{x}) \quad \mbox{and} \quad  {H} = F - {G}.
\end{equation}

Note that functions $G$ and $H$ depend on choice of $\lambda$.
In the rest of this section $\lambda$ is treated as fixed large constant.
In Lemmas \ref{G1-neg} and \ref{G2-est} below we derive some straightforward
differentiability properties of $G$ and $H$ which we use to estimate 
 ``tail" parts of spectral multipliers of $G(\LL/\lambda )$ and $H(\LL/\lambda)$.

\begin{lemma}\label{G1-neg}
Assume that $\lambda >1$,  $\supp F  \subset [1/2, 1]$, $F\in L^2(\R)$ and let
${G}$ be the function corresponding to $F$ and $\lambda$ defined by \eqref{GH}.
Then ${G}$ can be extended to an entire analytic function and there exists a constant
$C>0$ such that
$$
|{G}(z)| \leq
C\lambda^{3/2}\exp(\lambda^{3/2}|z|^{1/2}/6)\|F\|_{L^2}
$$
for all $z\in\C$.
\end{lemma}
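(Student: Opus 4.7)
The proof hinges on the Paley--Wiener theorem. Since $\tilde G=\tilde F\ast \psi_{\lambda^{3/2}/6}$ and $\widehat{\psi_{\lambda^{3/2}/6}}(\xi)=\hat\psi\big(6\xi/\lambda^{3/2}\big)$ is supported in $[-\lambda^{3/2}/6,\lambda^{3/2}/6]$, the Fourier transform $\widehat{\tilde G}$ has support in the same interval. Writing $\tilde G$ via Fourier inversion and applying Cauchy--Schwarz in the $\xi$-integral yields the complex extension
$$
|\tilde G(w)|\le C\lambda^{3/4}\exp\!\big(\lambda^{3/2}|\Im w|/6\big)\,\|\tilde G\|_{L^2},\qquad w\in\C.
$$

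Next I would bound $\|\tilde G\|_{L^2}$ in terms of $\|F\|_{L^2}$. By Young's inequality $\|\tilde G\|_{L^2}\le \|\psi\|_{L^1}\|\tilde F\|_{L^2}$, and the substitution $u=x^2$ combined with $\supp F\subset[1/2,1]$ gives
$$
\|\tilde F\|_{L^2}^2=2\int_0^\infty |F(x^2)|^2\,dx=\int_{1/2}^1 |F(u)|^2 u^{-1/2}\,du\le \sqrt{2}\,\|F\|_{L^2}^2,
$$
so $\|\tilde G\|_{L^2}\le C\|F\|_{L^2}$.

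To upgrade the complex extension of $\tilde G$ to an entire extension of $G$ itself, I would exploit parity. The function $\tilde F(x)=F(x^2)$ is manifestly even, and $\psi$ is even because $\hat\psi$ is symmetric; hence $\tilde G$ is even. An even entire function factors as $\tilde G(w)=\phi(w^2)$ for some entire $\phi$, and the definition $G(x)=\tilde G(\sqrt x)$ on $[0,\infty)$ therefore extends unambiguously to the entire function $\phi$ on all of $\C$.

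Finally, for any $z\in\C$, writing $z=re^{i\theta}$ and choosing either branch of $\sqrt z=\sqrt r\,e^{i\theta/2}$ gives $|\Im\sqrt z|=\sqrt r\,|\sin(\theta/2)|\le |z|^{1/2}$, so substituting $w=\sqrt z$ in the bound on $\tilde G$ yields
$$
|G(z)|\le C\lambda^{3/4}\exp\!\big(\lambda^{3/2}|z|^{1/2}/6\big)\|F\|_{L^2},
$$
which is in fact stronger than the stated inequality (for $\lambda\ge 1$). There is no serious obstacle; the only point requiring genuine care is the even-parity argument that upgrades the complex extension of $\tilde G$ to an honest entire extension of $G$, rather than merely a function defined on $[0,\infty)$.
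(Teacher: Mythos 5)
Your proof is correct and follows essentially the same route as the paper's: both rest on the band-limitation of $\tilde G$ to $[-\lambda^{3/2}/6,\lambda^{3/2}/6]$ (you via Fourier inversion plus Cauchy--Schwarz, the paper via the exponential-type bound on the convolving kernel $\psi_{\lambda^{3/2}/6}$ together with $\|\tilde F\|_{L^1}\le C\|F\|_{L^2}$), then on the evenness of $\tilde G$ to make $G$ a genuine entire function, and finally on $|\Im\sqrt z|\le |z|^{1/2}$. Your Cauchy--Schwarz variant even yields the slightly sharper factor $\lambda^{3/4}$ in place of $\lambda^{3/2}$, which of course still implies the stated bound.
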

\begin{proof}
Note that since $\supp({\hat \psi}) \subset [-1, 1]$ and
$\int |{\hat \psi}| \leq 1$, $\psi$ is an entire
analytic function satisfying
$$
|\psi(z)| \leq \exp(|\Im(z)|).
$$
Consequently ${\tilde G}$ is an entire function and
$$
|{\tilde G}|(z)
\leq C\lambda^{3/2}\exp(\lambda^{3/2}|\Im(z)|/6)\|F\|_{L^2}
$$
(the last inequality follows since the $L^2$ norms of $F$ and ${\tilde F}$ are comparable).  Note that both ${\tilde F}$ and $\psi$ are symmetric so
${\tilde G}$ is also symmetric. Hence ${G}$ is a well defined entire
function.  Thus
\begin{eqnarray*}
|{G}(z)| = |{\tilde G}(\sqrt{z})| &\leq&
C\lambda^{3/2}\exp(\lambda^{3/2}|\Im(\sqrt{z})|/6)\|F\|_{L^2}
\\
&\leq& C\lambda^{3/2}\exp(\lambda^{3/2}|z|^{1/2}/6)\|F\|_{L^2}.
\end{eqnarray*}
This ends the proof of Lemma \ref{G1-neg}.
\end{proof}

In the next lemma we describe  the behavior and of $L^2$ norm of
function $H$ depending on $\lambda$ and we notice that outside the
support of $F$ function $H$ decays rapidly.

\begin{lemma}\label{G2-est}
Assume that $\lambda >1$, $\supp F  \subset [1/2, 1]$, $s\geq 0$, $F\in
H^s(\R)$ and let ${G}$ and ${H}$ be the functions corresponding to $F$ and $\lambda$ defined by \eqref{GH}. Then there exists a constant $C$ such that
$$
\|{H}\|_{L^2(\R_+)} \leq C(\lambda^{3/2})^{-s}\|F\|_{H^s}.
$$
In addition there exists a constant $C$ such that,
$$
\sup_{0\leq x\leq 1/4}\big(|H(x^2)|+|d_x(H(x^2))|+|d_x^2(H(x^2))|\big)\leq C\|F\|_{L^2}
$$
and for every $l\in \N$ there exists a constant $C_l$ such that
$$
\sup_{x>2}\left(|{H}(x)| + |xd_x {H}(x)| + |x^2d_x^2
{H}(x)|\right)x^l \leq C_l \lambda^{-l}\|F\|_{L^2}
$$
for all $\lambda >1$.
\end{lemma}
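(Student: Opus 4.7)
All three estimates reduce to two mechanisms: the vanishing of $\widehat{\tilde H}(\xi)$ on $[-h/2,h/2]$, where $h=\lambda^{3/2}/6$ and $\tilde H(x):=H(x^2)=\tilde F(x)-(\tilde F*\psi_h)(x)$; and the Schwartz decay of $\psi_h$ and its derivatives, applied where the evaluation point is separated from $\supp\tilde F\subset[-1,-1/\sqrt 2]\cup[1/\sqrt 2,1]$. Since the bounds are trivial for $\lambda$ confined to any bounded range, I may assume $\lambda$ large throughout. The master $L^2$ bound follows from the identity $\widehat{\tilde H}(\xi)=\hat{\tilde F}(\xi)\bigl(1-\hat\psi(\xi/h)\bigr)$ together with $\hat\psi\equiv 1$ on $[-1/2,1/2]$, which force $\widehat{\tilde H}\equiv 0$ on $[-h/2,h/2]$; Plancherel and the Sobolev weight $(1+|\xi|^2)^s$ then yield
\[
\|\tilde H\|_{L^2(\R)}^2\le C(h/2)^{-2s}\|\tilde F\|_{H^s}^2\le C\lambda^{-3s}\|F\|_{H^s}^2,
\]
where the last inequality is the routine fact that composition with the diffeomorphism $x\mapsto x^2$ on a neighborhood of $\supp F$ preserves $H^s$ up to a constant.

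For Part 1, the substitution $u=x^2$ together with the evenness of $\tilde H$ gives $\|H\|_{L^2(\R_+)}^2=\int_\R|x|\,|\tilde H(x)|^2\,dx$, which I split at $|x|=2$. The inner piece is bounded by $2\|\tilde H\|_{L^2}^2$ and hence by the master estimate. On $|x|>2$ one has $\tilde F(x)=0$ and $|x-y|\ge|x|/2$ for $y\in\supp\tilde F$, so the Schwartz decay $|\psi_h(z)|\le C_N h(1+h|z|)^{-N}$ produces arbitrarily fast decay in both $|x|$ and $h$, easily absorbed into the master bound.

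For Parts 2 and 3 the argument of $F$ (namely $x^2$ in Part 2, $x$ in Part 3) lies outside $\supp F=[1/2,1]$, so $F$ itself vanishes and $H$ reduces to $-\tilde G(\sqrt{\,\cdot\,})$. In Part 2, for $x\in[0,1/4]$ one has $|x-y|\ge 1/\sqrt 2-1/4>0$ for $y\in\supp\tilde F$, and the Schwartz bounds $|\psi_h^{(k)}(z)|\le C_{N,k}h^{k+1}(1+h|z|)^{-N}$ with $N\ge k+1$ give uniform estimates $|d_x^k(\tilde F*\psi_h)(x)|\le C_k\|F\|_{L^2}$ for $k=0,1,2$. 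In Part 3, $x>2$ gives $|\sqrt x-y|\gtrsim\sqrt x$ for $y\in\supp\tilde F$; the chain rule writes $d_x^j(\tilde G(\sqrt x))$ as a combination of terms $\tilde G^{(k)}(\sqrt x)\,x^{-a_{j,k}}$ with $k\le j$, and each factor satisfies $|\tilde G^{(k)}(\sqrt x)|\le C_N h^{k+1-N}x^{-N/2}\|F\|_{L^2}$. Taking $N$ large in terms of $l$ and $j$ delivers the required $x^{-l}\lambda^{-l}$ decay, since $h\sim\lambda^{3/2}$ makes $h^{-M}\le\lambda^{-l}$ once $M$ is large enough.

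The only step that is not purely mechanical is the substitution inequality $\|\tilde F\|_{H^s}\le C\|F\|_{H^s}$. For integer $s$ this is the chain rule; in general I would appeal to interpolation between integer orders, or, equivalently, to the diffeomorphism invariance of $H^s$ on a neighborhood of the compact set $\supp F$ (on which $x\mapsto x^2$ is a smooth diffeomorphism). I expect no other obstacles; everything else is bookkeeping of exponents in Schwartz tail estimates.
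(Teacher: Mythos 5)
Your proposal is correct and follows essentially the same route as the paper: the low-frequency cancellation $\widehat{\tilde H}\equiv 0$ on $[-\lambda^{3/2}/12,\lambda^{3/2}/12]$ (the paper phrases this as $\widehat{\psi}_{\lambda^{3/2}/6}\equiv 1$ there) yields the $L^2$ bound after the substitution $u=x^2$, and the second and third estimates come from the Schwartz decay of $\tilde G=\tilde F*\psi_h$ and its derivatives away from $\supp \tilde F$, exactly as in the paper. You merely make explicit the Fourier-side identity and the tail bookkeeping that the paper leaves implicit, including the substitution inequality $\|\tilde F\|_{H^s}\le C\|F\|_{H^s}$, which the paper likewise asserts without proof.
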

\begin{proof}
We have $\|{\tilde F}\|_{H^s} \leq C\|F\|_{H^s}$, so $\|{\tilde
G}\|_{H^s} \leq C\|F\|_{H^s}$.  Since Fourier transform of
$\psi_{\lambda^{{3}/{2}}/6}$ is $1$ on $[-\lambda^{3/2}/12,
\lambda^{3/2}/12]$, we have
$$
\|{\tilde F} - {\tilde G}\|_{L^2} \leq
(\lambda^{3/2}/12)^{-s}\|{\tilde F}\|_{H^s} \leq
C(\lambda^{3/2})^{-s}\|F\|_{H^s}.
$$
Changing variables yields
\begin{eqnarray}\label{l2-subst}
\int_{0}^4|{H}|^2(x)dx &=& \int_{0}^4|{\tilde F} - {\tilde
G}|^2(\sqrt{x})dx \leq 4 \|{\tilde F} - {\tilde G}\|_{L^2}^2\\
&\leq& C(\lambda^{3/2})^{-s}\|F\|_{H^s}.\nonumber
\end{eqnarray}
Since $\psi$ belongs to set of Schwartz class functions and
$\supp{\tilde F} \subset [-1, -1/\sqrt{2}] \cup [1/\sqrt{2},1]$ so for every $l
\in \N$ there exists constant $C_l$ such that
$$
|{H}( x^2)|=|{\tilde F} - {\tilde G}|(x) \leq
C_l(\lambda^{3/2} x)^{-l}\|F\|_{L^2}
$$
for all $x > 2$. Thus for $l'$ large enough
$$
\int_4^\infty |{H}(x)|^2 dx
\leq C\lambda^{-l'}\|F\|^2_{L^2} \le (\lambda^{3/2})^{-2s}\|F\|^2_{L^2}
$$
which together with (\ref{l2-subst}) gives the first estimate for $L^2$ norm
of ${H}$.

To show the second estimate we note that $\supp{\tilde F}$ and the interval $[0,1/4]$ are 
disjoint so for any $l \in  \N$ there exists a constant $C_{l}$ such that for all $x \leq 1/4$
$$
|d_x^{l}(H(x^2))|=|d_x^{l}({\tilde F} - {\tilde G})|(x)= |d_x^{l}{\tilde G}|(x) \leq
C\|F\|_{L^2}
$$
for all $\lambda\geq 1$.

To show the third estimate we note that for any $l',l''\in \N$ there
exists constant $C=C_{l',l''}$ such that for all $x \geq 2^{1/2}$
$$
|d_x^{l'}({\tilde F} - {\tilde G})|(x)= |d_x^{l'}{\tilde G}|(x) \leq
C(\lambda^{3/2}x)^{-l''}\|F\|_{L^2}.
$$
Now the third estimate follows from the above inequality.

\end{proof}

\begin{lemma}\label{G2-L1}
Let $\LL$ be an anharmonic oscillator defined by \eqref{op} and assume next
that   $s \geq 1/2 + 1/6$, $\supp F  \subset [1/2, 1]$, $F\in H^s(\R)$.
Now if ${H}$ is the function  corresponding to $F$ and~$\lambda$ defined by \eqref{GH} then for all $\lambda>1$
$$
\|{H}(\LL/\lambda)\delta_y\|_{L^1(I_{2\lambda})} \leq C\|F\|_{H^s}
$$
where $I_{2\lambda}= [-2\lambda, 2\lambda]$.
\end{lemma}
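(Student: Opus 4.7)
The plan is to reduce the $L^1(I_{2\lambda})$ estimate to an $L^2$ estimate by Cauchy--Schwarz, taking advantage of $|I_{2\lambda}|=4\lambda$:
$$
\|{H}(\LL/\lambda)\delta_y\|_{L^1(I_{2\lambda})}\le(4\lambda)^{1/2}\,\|{H}(\LL/\lambda)\delta_y\|_{L^2(\R)}.
$$
It then suffices to prove $\|{H}(\LL/\lambda)\delta_y\|_{L^2}^2\le C\lambda^{-1}\|F\|_{H^s}^2$. Expanding in the orthonormal eigenbasis,
$$
\|{H}(\LL/\lambda)\delta_y\|_{L^2}^2=\sum_n|{H}(\lambda_n/\lambda)|^2\,|{\phi}_n(y)|^2,
$$
I would split the sum into the main region $\lambda_n/\lambda\in[1/4,2]$ and two tail regions.

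For the main region two inputs are crucial. First, Lemma~\ref{le2.3} gives the uniform pointwise bound $|{\phi}_n(y)|^2\le C\lambda_n^{-1/2}\le C\lambda^{-1/2}$ for every $y\in\R$ whenever $\lambda_n\sim\lambda$ (the sharpest case is $y$ near the turning point $\pm\lambda_n$). Second, the spacing inequality \eqref{e2.4} gives $\lambda_{n+1}-\lambda_n\ge\frac{\pi}{2}(2\lambda)^{-1/2}\ge\lambda^{-1/2}$ whenever $\lambda_{n+1}\le 2\lambda$, so the ratios $\lambda_n/\lambda$ are separated by at least $\lambda^{-3/2}$ and each dyadic interval of length $\lambda^{-3/2}$ contains at most one such ratio. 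Consequently
$$
\sum_{n:\,\lambda_n/\lambda\in[1/4,2]}|{H}(\lambda_n/\lambda)|^2\;\le\;\lambda^{3/2}\,\|{H}\|_{\lambda^{3/2},2}^2.
$$
To control the right-hand side I would apply Lemma~\ref{sup-sum} (with $q=2$, $s>1/2$) to $\tilde H=\tilde F-\tilde G$, which lives naturally on $\R$: the bound $\|\tilde H\|_{L^2}\le C\lambda^{-3s/2}\|F\|_{H^s}$ is given in Lemma~\ref{G2-est}, and $\|\tilde H\|_{H^s}\le\|\tilde F\|_{H^s}+\|\tilde F*\psi_{\lambda^{3/2}/6}\|_{H^s}\le C\|F\|_{H^s}$ since convolution with $\psi_{\lambda^{3/2}/6}$ does not increase the $H^s$ norm. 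Lemma~\ref{sup-sum} then yields $\|\tilde H\|_{\lambda^{3/2},2}^2\le C\lambda^{-3s}\|F\|_{H^s}^2$, which transfers to $\|H\|_{\lambda^{3/2},2}^2\le C\lambda^{-3s}\|F\|_{H^s}^2$ by the change of variables $\theta=u^2$ whose Jacobian is bounded on $[1/4,2]$. Collecting factors, the main-region sum is bounded by $C\lambda^{-1/2}\cdot\lambda^{3/2}\cdot\lambda^{-3s}\|F\|_{H^s}^2=C\lambda^{1-3s}\|F\|_{H^s}^2$, and multiplying by the Cauchy--Schwarz factor $4\lambda$ yields $C\lambda^{2-3s}\|F\|_{H^s}^2$, which is bounded precisely when $s\ge 2/3=1/2+1/6$, matching the hypothesis.

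The two tail regions are handled by pointwise smallness of $H$ away from $\supp F=[1/2,1]$. For $\lambda_n/\lambda>2$, the third estimate of Lemma~\ref{G2-est} gives $|{H}(\lambda_n/\lambda)|\le C_l\lambda^{-l}(\lambda_n/\lambda)^{-l}\|F\|_{L^2}$ for every $l$; combined with $|{\phi}_n(y)|^2\le C\lambda_n^{-1/2}$ this contribution is $O(\lambda^{-N}\|F\|_{L^2}^2)$ for arbitrary $N$. For $\lambda_n/\lambda<1/4$, the function $F$ vanishes so $H=-\tilde G(\sqrt{\,\cdot\,})$ there; the positive distance between $\supp\tilde F\subset[1/\sqrt 2,1]$ and $\sqrt{[0,1/4]}\subset[0,1/2]$, combined with the Schwartz-class decay of $\psi_{\lambda^{3/2}/6}$ in $\tilde G=\tilde F*\psi_{\lambda^{3/2}/6}$, yields $|{H}(\lambda_n/\lambda)|\le C_l\lambda^{-l}\|F\|_{L^2}$ for every $l$; together with the crude count $\#\{n:\lambda_n\le\lambda/4\}=O(\lambda^{3/2})$ from \eqref{e2.3} this tail also collapses to $O(\lambda^{-N}\|F\|_{L^2}^2)$.

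The main obstacle is the tightness of the main-region estimate. The exponent $2-3s$ that comes out of the computation is exactly zero at the borderline $s=2/3$, so there is no slack and every input must be used at its optimal strength: the uniform bound $|{\phi}_n(y)|^2\le C\lambda_n^{-1/2}$ (sharp near the turning points), the sharp lower bound $\lambda^{-1/2}$ on the spacing of eigenvalues, and the sharp discrete-to-continuous conversion of Lemma~\ref{sup-sum}. If any of these were off by even a single factor of $\lambda^\varepsilon$ the argument would fail at the stated exponent $s=1/2+1/6$.
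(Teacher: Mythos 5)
Your proof is correct, and its core --- Cauchy--Schwarz against $|I_{2\lambda}|^{1/2}$, the eigenfunction expansion of the kernel, the uniform bound $|{\phi}_n(y)|^2\le C\lambda^{-1/2}$, and the combination of eigenvalue spacing with Lemma~\ref{sup-sum} and Lemma~\ref{G2-est} so that the exponents cancel exactly at $s=1/2+1/6$ --- is precisely the paper's argument for its main piece $\nu H$ (the paper's cutoff region is $[1/16,4]$ rather than your $[1/4,2]$, an immaterial difference). Where you genuinely diverge is the treatment of the tails. The paper multiplies $H$ by $1-\nu$, decomposes dyadically, and invokes standard H\"ormander-type spectral multiplier theorems for operators with Gaussian heat kernel bounds (\cite{DOS}, \cite{COSY}) to obtain full $L^1\to L^1$ boundedness of $(1-\nu)H(\LL/\lambda)$; you instead sum the tails of the eigenfunction expansion directly, using the rapid decay of $H$ away from $[1/2,1]$ together with the Weyl count $O(T^{3/2})$. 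Your route is more elementary and self-contained for the purposes of this lemma. Note only that it requires the quantitative smallness $|H(x)|\le C_l\lambda^{-l}\|F\|_{L^2}$ for $x\le 1/4$, which is stronger than the mere boundedness stated in the second claim of Lemma~\ref{G2-est}; it does, however, follow from the same Schwartz-tail argument the paper uses for the third claim, and you derive it correctly. The paper's heavier machinery pays off later, since the reduction to $\nu H$ is reused verbatim in Lemma~\ref{G2-L4/3}.
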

\begin{proof} Consider function $\nu \in  C_c^\infty(\R)$ such that $\nu (x)=1$ for all
$x\in [1/8, 2]$ and $\supp \nu \subset [1/16,4]$.  The second claim
and the third claim of Lemma~\ref{G2-est} show that for $x \geq 2$
and $x<1/8$  function $(1-\nu) {H}$ satisfies assumptions of most
standard  multiplier theorems and the corresponding spectral multiplier $(1-\nu) {H}(\LL/\lambda)$
satisfies estimate of Lemma~\ref{G2-L1}, see e.g \cite[Theorem 3.2]{DOS}.
\cite[Theorem 3.2]{CoS} or \cite[Theorem 2.4]{He1}. To be more
precise we choose a function $\omega \in C_c^\infty(\R)$ such that
$0 \le \omega (x) \le 1 $ and  $\omega (x)=1$ for $x \le 2$ and
$\omega (x)=0$ for $x \ge 4$. For $n\in \N$ put  $\omega_n(x)=\omega(2^{-n}x)-\omega(2^{-n+1}x)$ so that
$$
\omega(x)+\sum_{n=1}^\infty \omega_n(x) =1 \quad \forall_{x\ge 1}.
$$
Now if we set
$$H_{0,\nu}=\omega(1-\nu) {H} \quad \mbox{and}  \quad H_{n,\nu}=\omega_n(1-\nu) {H}$$
 then by Lemma~\ref{G2-est}
$$
\|\delta_{2^{n+2}}H_{n,\nu}\|_{W^{\infty}_s} \le C  2^{-n}  \|F\|_2
$$
for all $s>1/2$ and all $n\in \N$. Recall that ${W^{\infty}_s}$ is $L^\infty$ Sobolev space of order
$s$.   Note also that $\supp \delta_{2^{n+2}}H_{n,\nu} \subset [-1,1].$   Now by \cite[Theorem 3.1]{COSY} (or by the results from \cite{He1, CoS, DOS} mentioned above)
\begin{eqnarray*}
\|(1-\nu) {H}(\LL/\lambda)\|_{1\to1}\le \sum_{n=0}^\infty \|H_{n,\nu}(\LL/\lambda)\|_{1\to1}
\\=\sum_{n=0}^\infty \|\delta_{2^{n+2}}H_{n,\nu}(2^{-n-2}\LL/\lambda)\|_{1\to1}
\le C \sum_{n=0}^\infty 2^{-n}  \|F\|_2.
\end{eqnarray*}

Hence the operator $(1-\nu) {H}(\LL/\lambda)$ is continuous on all
$L^p(\R)$ spaces and it is enough to consider the the multiplier $
\nu {H}(\LL/\lambda)$. Note that $\supp \nu {H} \subset [1/16, 4]$.
Recall that $\lambda_n$ and $\phi_n$ are the  eigenvalues and
corresponding eigenfunctions of the operator $\LL$. Next write
\begin{eqnarray*}
\|\nu {H}(\LL/\lambda)\delta_y\|^2_{L^2} &=& \sum_{n=1}^\infty |\nu
{H}(\lambda_n/\lambda)|^2|{\phi_{n}}(y)|^2
\\
&\leq& C\lambda^{-1/2}\sum_{n=1}^\infty |\nu
{H}(\lambda_{n}/\lambda)|^2.
\end{eqnarray*}
By inequalities \eqref{e2.3} and \eqref{e2.4} of Proposition  \ref{prop1}
 the distance between $\lambda_n$ and $\lambda_{n+1}$  is of order $\lambda_n^{-1/2} \sim \lambda^{-1/2}$. Hence
by Lemmas  \ref{sup-sum} and \ref{G2-est}
\begin{eqnarray}
\sum_{n=1}^\infty |\nu {H}(\lambda_{n}/\lambda)|^2 \leq
C\sum_{k=0}^\infty \sup_{} \{|\nu {H}(x)|^2\colon \, k\lambda^{-3/2} \leq x \leq (k+1)\lambda^{-3/2}   \}\nonumber  \\
\leq
 C'(\lambda^{3/2}\|\nu {H}\|^2_{L^2} + (\lambda^{3/2})^{1 - 2s}\|\nu {H}\|^2_{H^s})
 \label{g2est} \\
\leq C''(\lambda^{3/2})^{1 - 2s}\|F\|^2_{H^s}\nonumber
\end{eqnarray}
Now, $s \geq 1/2 + 1/6$, so $1 - 2s \leq -2/6$ and
$(\lambda^{3/2})^{1 - 2s} \leq (\lambda^{3/2})^{-2/6} = \lambda^{-1/2}$
which means that
$$
\sum_{n=1}^\infty |\nu {H}(\lambda_{n}/\lambda)|^2 \leq
C\lambda^{-1/2}\|F\|^2_{H^s}
$$
which in turn implies
$$
\|\nu {H}(\LL/\lambda)\delta_y\|^2_{L^2} \leq
C_1\lambda^{-1/2}C_2\lambda^{-1/2}\|F\|^2_{H^s} = C
\lambda^{-1}\|F\|^2_{H^s}.
$$
Hence by H\"older inequality
\begin{eqnarray*}
\|\nu {H}(\lambda^{-1}{\LL})\delta_y\|_{L^1(I_{2\lambda})} &\leq&
|I_{2\lambda}|^{1/2}\| \nu {H}(\lambda^{-1}{\LL})\delta_y\|_{L^2}
\\
&\leq& (4\lambda)^{1/2}(C\lambda^{-1/2})\|F\|_{H^s} \leq
C'\|F\|_{H^s}
\end{eqnarray*}
which yields  the claim.

\end{proof}

\begin{lemma}\label{G2-L4/3}
Let $\LL$ be an anharmonic oscillator defined by \eqref{op} and assume
that $s > 1/2 $, $\supp F  \subset [1/2, 1]$, $F\in H^s(\R)$
and let ${H}$ be the function corresponding to $F$ and $\lambda$ defined by \eqref{GH}. Then
$$
\|{H}(\LL/\lambda)\|_{L^{4/3}(I_{2\lambda})\rightarrow
L^{4/3}(I_{2\lambda})} \leq C_s\|F\|_{H^s}
$$
for all $\lambda \ge 1$.
\end{lemma}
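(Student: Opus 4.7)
Following Lemma~\ref{G2-L1}, I would begin by decomposing $H=\nu H+(1-\nu)H$ with a smooth cutoff $\nu\in C_c^\infty(\R)$ satisfying $\nu\equiv 1$ on $[1/8,2]$ and $\supp\nu\subset[1/16,4]$. Lemma~\ref{G2-est} combined with standard $L^p$ spectral multiplier results (e.g.\ \cite[Theorem 3.1]{COSY}) shows that $(1-\nu)H(\LL/\lambda)$ is bounded on every $L^p(\R)$ with norm $\le C\|F\|_{L^2}\le C\|F\|_{H^s}$, so it suffices to bound $\nu H(\LL/\lambda)$ acting on $L^{4/3}(I_{2\lambda})$.

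The core $L^2$ computation from the proof of Lemma~\ref{G2-L1}---using the eigenvalue spacing $\lambda_{n+1}-\lambda_n\sim\lambda^{-1/2}$ from Proposition~\ref{prop1}, the pointwise eigenfunction bound $|\phi_n(y)|^2\le C\lambda^{-1/2}$ from \eqref{eq2.30}, and the sup-sum Lemma~\ref{sup-sum}---continues to yield
$$ \sup_{y\in\R}\|\nu H(\LL/\lambda)\delta_y\|_{L^2(\R)}^2\le C\lambda^{-1/2}\sum_n|\nu H(\lambda_n/\lambda)|^2\le C\lambda^{1-3s}\|F\|_{H^s}^2. $$
H\"older's inequality on $I_{2\lambda}$ (of length $4\lambda$) then produces the $L^1\to L^{4/3}$ estimate
$$ \sup_{y}\|\nu H(\LL/\lambda)\delta_y\|_{L^{4/3}(I_{2\lambda})}\le(4\lambda)^{1/4}C\lambda^{(1-3s)/2}\|F\|_{H^s}=C\lambda^{(3-6s)/4}\|F\|_{H^s}\le C\|F\|_{H^s} $$
whenever $s\ge 1/2$. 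Equivalently, $\|\nu H(\LL/\lambda)\|_{L^1(I_{2\lambda})\to L^{4/3}(I_{2\lambda})}\le C\|F\|_{H^s}$.

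The non-standard interpolation step then upgrades this $L^1\to L^{4/3}$ bound to the desired $L^{4/3}\to L^{4/3}$ bound. A direct Riesz--Thorin between the $L^1\to L^{4/3}$ estimate and the trivial $L^2\to L^2$ bound $\|\nu H(\LL/\lambda)\|_{L^2\to L^2}\le\|H\|_\infty\le C\|F\|_{H^s}$ only reaches $L^{4/3}\to L^{8/5}$, since the convex hull of the points $(1/p,1/q)=(1,3/4)$ and $(1/2,1/2)$ in the Riesz diagram misses $(3/4,3/4)$. Following the trick reused later for $G$ in Lemma~\ref{Lam3/4}, I would split $\tilde H$ at an auxiliary frequency scale $M\in(1,\lambda^{3/2})$ into a smoothed piece $\tilde H_M=\tilde H*\psi_M$---to which Lemma~\ref{G2-L1} applies at Sobolev index $2/3$, with cost $M^{2/3-s}\|F\|_{H^s}$---and a rough remainder with $\|\cdot\|_\infty\le CM^{1/2-s}\|F\|_{H^s}$ (yielding a decaying $L^2\to L^2$ bound). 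Riesz--Thorin applied piece-by-piece and then optimising over $M$ should produce the desired $L^{4/3}\to L^{4/3}$ estimate.

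The main technical obstacle is to push the balance down to the critical threshold $s>1/2$: a naive piece-by-piece optimisation recovers only $s\ge 2/3$. The essential extra ingredient should be the sharper turning-point bound $|\phi_n(y)|^2\le C\lambda_n^{-1/2}(||y|-\lambda_n|+1)^{-1/2}$ from \eqref{eq2.30}, which refines the $L^2$ kernel estimate in the turning-point region $|y|\in[\lambda/4,2\lambda]$ where the cruder pointwise bound $\lambda^{-1/2}$ is wasteful. This is precisely the feature that distinguishes $p=4/3$ (dual to the eigenfunction endpoint $p=4$ in \eqref{eq2.31}) from $p=1$ and allows the Sobolev threshold to drop from $2/3$ to $1/2$.
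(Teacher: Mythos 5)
Your reduction to $\nu H$ and the $L^1\to L^{4/3}(I_{2\lambda})$ estimate obtained from the $L^2$ kernel bound plus H\"older are correct, and you rightly observe that Riesz--Thorin between this and the trivial $L^2\to L^2$ bound only reaches $L^{4/3}\to L^{8/5}$. You also correctly diagnose that the turning-point decay in \eqref{eq2.30} must be the missing ingredient. However, the proposal stops exactly where the real work begins: the frequency-splitting of $\tilde H$ at an auxiliary scale $M$ that you sketch does not exploit the turning-point structure at all and, as you yourself admit, only reaches $s\ge 2/3$; no mechanism is given for converting the pointwise refinement $|\phi_n(y)|^2\le C\lambda_n^{-1/2}(\bigl||y|-\lambda_n\bigr|+1)^{-1/2}$ into the required operator bound at the threshold $s>1/2$. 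This is a genuine gap, and it is the heart of the lemma.

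The paper's device is a \emph{spatial} (not frequency) decomposition on the dual side of the kernel: with a dyadic partition of unity $\eta(2^{-j}\cdot)$ one sets $\phi_{j,\lambda_n}(x)=\eta(2^{-j}(x-\lambda_n))\phi_n(x)$ and $Q_jf=\sum_n \nu H(\lambda_n/\lambda)\,\phi_n\langle f,\phi_{j,\lambda_n}\rangle$, so that $\nu H(\LL/\lambda)f=\sum_{2^j\le 10\lambda}(Q_j+Q_j')f$ for $f$ supported in $I_{2\lambda}$. The turning-point bound gives $|\phi_{j,\lambda_n}(y)|\le C\lambda^{-1/4}2^{-j/4}$ and $\|\phi_{j,\lambda_n}\|_{L^2}^2\le C\lambda^{-1/2}2^{j/2}$, whence, combined with \eqref{g2est}, $\|I_{2\lambda}Q_j\|_{L^1\to L^1}\le C\lambda^{1/4}2^{-j/4}(\lambda^{-3/2})^{s-1/2}\|F\|_{H^s}$ and $\|Q_j\|_{L^2\to L^2}\le C\lambda^{-1/4}2^{j/4}(\lambda^{-3/2})^{s-1/2}\|F\|_{H^s}$. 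At $L^{4/3}$ these two bounds interpolate to the $j$-independent quantity $C(\lambda^{-3/2})^{s-1/2}\|F\|_{H^s}$ --- the powers of $2^{j}$ and of $\lambda$ cancel exactly at this exponent --- and summing over the $O(\log\lambda)$ admissible values of $j$ costs only a logarithm, which is absorbed precisely because $s>1/2$ strictly. Without a decomposition of this kind (one operator per dyadic distance to the turning point, interpolated piecewise and then summed), your argument cannot reach the stated threshold.
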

\begin{proof}
Similarly as  in Lemma \ref{G2-L1} it is enough to consider the
operator $\nu {H}(\LL/\lambda)$ where  $\supp(\nu {H}) \subset [1/16,
4]$. We recall that
$$
\nu {H}(\LL/\lambda)f = \sum_{\lambda/16 \le \lambda_n \le 4
\lambda} \nu {H}(\lambda_n/\lambda){\phi_{n}}\langle f,
{\phi_{n}}\rangle
$$
Let now  $\eta \in C^\infty_c({\mathbb R})$ be a such function that $0 \leq \eta \leq 1$,
$\supp \eta \subset [-2,-1/2]\cup [1/2,2]$ and
$$\sum_{j\in \Z}\eta(2^{-j} x) = 1$$ for
all $x \ne 0$.
For $x\geq 0$ put
$$\phi_{j,\lambda_n}(x) = \eta(2^{-j}(x - \lambda_n))\phi_{n}(x)
\quad
\mbox{ for $j>0$ and}
\quad
 {\phi}_{0,\lambda_n} = {\phi}_{n} - \sum_{j>0}{\phi}_{j,\lambda_n}.$$
For $x < 0$ we put ${\phi}_{j,\lambda_n}(x) = 0$. Next for all $j
\ge 0$ set
$$
Q_jf = \sum_{\lambda/16 \le \lambda_n \le 4 \lambda} \nu
{H}(\lambda_n/\lambda){{\phi}_{n}}\langle f, {\phi}_{j,
\lambda_n}\rangle.$$ If $\supp(f) \subset I_{2\lambda}$, then
$$
\nu {H}(\LL/\lambda)f = \sum_{2^j\leq 10\lambda} (Q_j + Q'_j)f
$$
where $Q'_j$ is build like $Q_j$ but using parts of ${\phi}_n$ on
$(-\infty,0)$. Next, by estimate \eqref{eq2.30}
$$
|{\phi}_{j, \lambda_n}|(y) \leq C\lambda^{-1/4}2^{-j/4}.
$$
Consequently if $K_{Q_j}(x, y)=Q_j\delta_y(x)$ is the kernel of the operator $Q_j$ then,
\begin{eqnarray*}
\|K_{Q_j}(\cdot \,, y)\|_2^2=\|Q_j\delta_y\|^2_{L^2} &=&
\sum_{\lambda/16 \le \lambda_n \le 4 \lambda} |\nu {H}(\lambda_n/\lambda)|^2|{\phi}_{j, \lambda_n}|^2(y)\|{{\phi}_{n}}\|^2_{L^2}\\
&\leq& C\lambda^{-1/2}2^{-j/2}\sum_{\lambda/16 \le \lambda_n \le 4
\lambda}|\nu {H}(\lambda_n/\lambda)|^2
\end{eqnarray*}
so by \eqref{g2est}
$$
\|Q_j\delta_y\|_{L^1(I_{2\lambda})} \leq
C\lambda^{1/4}2^{-j/4}(\lambda^{-3/2})^{s-1/2}\|F\|_{H^s}.
$$
Thus
$$
\|I_{2\lambda}Q_j\|_{L^1 \to L^1(I_{2\lambda})} \leq
C\lambda^{1/4}2^{-j/4}(\lambda^{-3/2})^{s-1/2}\|F\|_{H^s}.
$$

\bigskip

Next we consider the $L^2$ norm of the operator $Q_j$. Note that
$$
\|{\phi}_{j, \lambda_n}\|_{L^2}^2 \leq \lambda^{-1/2}2^{j/2}
$$
so
\begin{eqnarray*}
\|Q_jf\|^2_{L^2} &=& \sum_{\lambda/16 \le \lambda_n \le 4 \lambda}
|\nu {H}(\lambda_n/\lambda)|^2|\langle f,
{\phi}_{j,\lambda_n}\rangle|^2
\\
&\leq& \|f\|_{L^2}^2\lambda^{-1/2}2^{j/2}\sum_{\lambda/16 \le
\lambda_n \le 4 \lambda} |\nu {H}(\lambda_n/\lambda)|^2
\end{eqnarray*}
and
$$
\|Q_j\|_{L^2\rightarrow L^2} \leq
C\lambda^{-1/4}2^{j/4}(\lambda^{-3/2})^{s-1/2}\|F\|_{H^s}
$$
Now, by interpolation
$$
\|Q_j\|_{L^{4/3}(I_{2\lambda})\rightarrow L^{4/3}(I_{2\lambda})}
\leq C(\lambda^{-3/2})^{s-1/2}\|F\|_{H^s}
$$
Hence
$$
\sum_{2^j\leq 10\lambda}\|Q_j\|_{L^{4/3}(I_{2\lambda})\rightarrow
L^{4/3}(I_{2\lambda})} \leq
C\log(\lambda+1)(\lambda^{-3/2})^{s-1/2}\|F\|_{H^s}
$$
which is bounded when $s>1/2$.  We get estimate for $Q'_j$ by
symmetry which ends the proof.
\end{proof}

We now move to estimates for the part of the multiplier corresponding to the function $G$. 
For any $k\in \Z$, $k\ge 0$ we define a set $\Delta_k \subset \Z$ by the formula
\begin{equation}\label{delta-k}
\Delta_k=\left\{m\in \Z \colon 0 \le m \le 2^{k+2}\right\}.
\end{equation}
In the next lemma we describe useful wavelet like decomposition of the function $G$.

\begin{lemma}\label{G1-decomp}
Let $\lambda >1$ and  ${G}$ be the function corresponding to $F$ and $\lambda$ defined
by \eqref{GH}. Assume also that $s>1/2$. Then one can
decompose function $G$ in the following way
$$
{G}({x}) = {G}_{-\infty}({x}) + {G_\infty}({x}) + \sum_{  0 \le k \le
\log_2( \lambda^{3/2}/6)} \sum_{m\in \Delta_{k}} {G_{k,m}}({x})
$$
with functions  ${G_{k,m}}$ satisfying the following conditions:
 \begin{equation}\label{l6.5}
\supp {G_{k,m}} \subset [(m-1)2^{-k},(m+1)2^{-k}],
\end{equation}
\begin{equation}\label{gkm1}
 \left|d_{x}^{{l}} {G_{k,m}}({x})\right| \leq
C_{l}{\Theta_{k,m}}2^{k{l}} \quad
\forall{{l}\in \N},
\end{equation}
 and
\begin{equation}\label{gkm2}
\sum_{m\in \Delta_{k}} {\Theta_{k,m}}^2 \leq C2^{-k(2s - 1)}\|F\|^2_{H^s},
\end{equation}
where $C_{l}$ are constants depending only on $s$ and ${l}$ but do
not depend on $k$.

In addition  $\supp {(G_{-\infty})} \subset (-\infty,1/16]$,
$|{G_{-\infty}}|(x) \leq |{G}(x)|$ and
$$\|x^3{G_\infty}\|_{H^2} \leq C\|F\|_{H^s}.$$
\end{lemma}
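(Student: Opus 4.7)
The plan is to build the decomposition by marrying a telescoping smoothing (Littlewood--Paley) decomposition of $G$ in frequency with a dyadic smooth partition of unity in physical space. The structural input is that $\tilde G = \tilde F * \psi_{\lambda^{3/2}/6}$ is exactly Fourier-supported in $[-\lambda^{3/2}/6,\lambda^{3/2}/6]$ (since $\supp \hat\psi \subset [-1,1]$), so $G(x) = \tilde G(\sqrt x)$ is effectively band-limited at scale $\lambda^{3/2}$ on the essential support $\{\sqrt x \in \supp \tilde F\} \subset [1/2,1]$, where the Jacobian of $\sqrt{\cdot}$ is bounded above and below. This forces the useful dyadic scales to be $0 \le k \le k_{\max} := \log_2(\lambda^{3/2}/6)$.

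First I would fix a nonnegative $\rho \in C_c^\infty(\R)$ with $\supp \rho \subset [-1,1]$ and $\sum_{m \in \Z}\rho(x-m)=1$, and set $\rho_{k,m}(x) = \rho(2^k x - m)$, so that $\supp \rho_{k,m} \subset [(m-1)2^{-k},(m+1)2^{-k}]$ and $|d_x^l \rho_{k,m}| \le C_l 2^{kl}$. Pick smooth cutoffs $\eta_-,\eta_+$ with $0 \le \eta_\pm \le 1$, $\eta_- \equiv 1$ on $(-\infty,0]$ and $\supp \eta_- \subset (-\infty,1/16]$, while $\supp \eta_+ \subset [3,\infty)$. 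Set $G_{-\infty} = \eta_- G$; then $|G_{-\infty}| \le |G|$ and $\supp G_{-\infty} \subset (-\infty,1/16]$ are immediate. Initialize $G_\infty = \eta_+ G$: the Schwartz decay of $\psi$ gives $|d_u^l \tilde G(u)| \le C_{l,N}\lambda^{-N}\|F\|_{L^2}$ for $|u|\ge 11/10$, which transfers to $G$ on $[3,\infty)$ via $x=u^2$ and yields $\|x^3 G_\infty\|_{H^2} \le C\|F\|_{H^s}$; any spurious Schwartz tails added below will be absorbed into $G_\infty$ without damaging this bound.

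For the middle part $g = (1-\eta_- - \eta_+)G$, let $S_k$ denote smoothing at scale $2^{-k}$ (convolution with $\varphi_{2^k}$ for a fixed Schwartz $\varphi$ with $\hat\varphi(0)=1$), and set $Q_0 = S_0$, $Q_k = S_k - S_{k-1}$ for $k \ge 1$. The effective band-limit yields $g = S_{k_{\max}} g$ modulo a remainder of size $\lambda^{-N}\|F\|_{L^2}$ in any Sobolev norm (absorbed into $G_\infty$), and the telescoping $S_{k_{\max}} g = \sum_{k=0}^{k_{\max}} Q_k g$ defines
\[
G_{k,m} = \rho_{k,m}\, Q_k g, \qquad 0 \le k \le k_{\max},\quad m \in \Delta_k,
\]
with the $m \notin \Delta_k$ pieces (supported outside $[-2^{-k},\,4+2^{-k}]$) absorbed into $G_{-\infty}$ or $G_\infty$ via Schwartz decay. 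Setting $\Theta_{k,m} := C\|G_{k,m}\|_\infty$, property \eqref{l6.5} is immediate, and \eqref{gkm1} follows by Leibniz combined with the Bernstein inequality $\|d_x^l Q_k g\|_\infty \le C_l 2^{kl}\|Q_k g\|_\infty$ and the scaling bound $|d_x^l \rho_{k,m}| \le C_l 2^{kl}$.

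The central estimate \eqref{gkm2} comes from the Littlewood--Paley characterization of $H^s$: local Bernstein applied to the frequency-$2^k$ function $Q_k g$ gives $\|Q_k g\|_{L^\infty(I_m)}^2 \lesssim 2^k \|Q_k g\|_{L^2(\tilde I_m)}^2$ on a slight enlargement $\tilde I_m$ of $\supp \rho_{k,m}$, and bounded overlap yields
\[
\sum_{m \in \Delta_k} \Theta_{k,m}^2 \lesssim 2^k \|Q_k g\|_{L^2}^2 \lesssim 2^{k(1-2s)}\|g\|_{H^s}^2 \lesssim 2^{-k(2s-1)}\|F\|_{H^s}^2,
\]
using $\|g\|_{H^s} \le C\|G\|_{H^s} \le C\|F\|_{H^s}$, the latter by $\|\tilde G\|_{H^s} \le \|\tilde F\|_{H^s}$ combined with the bounded Jacobian of $u=\sqrt x$ on $\supp g$. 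The main obstacle I expect will be rigorously propagating the band-limit of $\tilde G$ through the square-root change of variable to $G$: since $\sqrt{\cdot}$ is singular at $0$, the band-limit does not literally transfer, and one must argue separately on the essential support (where the Jacobian is controlled) and in the Schwartz tail region (absorbed by $G_{\pm\infty}$). The cutoffs $\eta_\pm$ are placed precisely to insulate the wavelet-type decomposition from the singular points $0$ and $\infty$.
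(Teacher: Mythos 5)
Your overall architecture (cutoffs $G_{\pm\infty}$, a frequency decomposition truncated at $k_{\max}=\log_2(\lambda^{3/2}/6)$, a physical partition of unity at scale $2^{-k}$, and an $\ell^2_m$ bound on local sup-norms coming from $\|g\|_{H^s}\lesssim\|F\|_{H^s}$) is exactly the paper's. But there are two genuine gaps in the execution.

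First, the top-frequency remainder is not negligible. You claim $g=S_{k_{\max}}g$ up to an error of size $\lambda^{-N}\|F\|_{L^2}$ "in any Sobolev norm", to be absorbed into $G_\infty$. The exact band-limit of $\tilde G$ does \emph{not} survive the substitution $u=\sqrt{x}$ (as you yourself note); what survives on $[1/32,3]$ is only the chain of derivative bounds $\|d_x^l g\|_{L^2}\le C_l(\lambda^{3/2})^{l-s}\|F\|_{H^s}$ for $l\ge s$. Hence $\|(I-S_{k_{\max}})g\|_{L^2}\lesssim 2^{-k_{\max}l}\|d^lg\|_{L^2}\lesssim \lambda^{-3s/2}\|F\|_{H^s}$ with $l$-th derivatives of size $\lambda^{3(l-s)/2}\|F\|_{H^s}$ --- i.e.\ the remainder behaves precisely like a Littlewood--Paley block at frequency $2^{k_{\max}}$, not like an $O(\lambda^{-N})$ error. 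It cannot go into $G_\infty$ (which must carry two $L^2$ derivatives with weight $x^3$; that would force $\lambda^{3(2-s)/2}\le C$, false for $s<2$), nor into $G_{-\infty}$. The paper's fix is to define the $k=k_{\max}$ term of the sum as the residual $J_{k_0}=J-\sum_{k<k_0}J_k$ and check that it satisfies the same estimates \eqref{u-der1} as the genuine dyadic blocks, using exactly the derivative bounds above. You need this step.

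Second, with $\Theta_{k,m}:=C\|G_{k,m}\|_\infty$ the bound \eqref{gkm1} does not follow from "Leibniz plus Bernstein". Bernstein for a function band-limited at scale $2^k$ is global; its local version on an interval of length $\sim 2^{-k}$ (one wavelength) necessarily carries rapidly decaying contributions from neighbouring intervals: $\sup_{I_m}|d^lQ_kg|\le C_{l,N}2^{kl}\sum_{m'}(1+|m-m'|)^{-N}\sup_{I_{m'}}|Q_kg|$, and one cannot drop the tail and keep only $m'=m$ (let alone replace $\sup_{I_m}|Q_kg|$ by the smaller $\|\rho_{k,m}Q_kg\|_\infty$, which vanishes where $\rho_{k,m}$ does). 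Two repairs work: either redefine $\Theta_{k,m}$ as the tail-weighted quantity $\sum_{m'}(1+|m-m'|)^{-N}\|Q_kg\|_{L^\infty(I_{m'})}$, which still satisfies \eqref{gkm2} because discrete convolution with $(1+|j|)^{-N}$ is bounded on $\ell^2$; or do what the paper does: set $\Gamma_{k,l,m}=\sup_x|d_x^lG_{k,m}|$, prove $\sum_m\Gamma_{k,l,m}^2\le C_l2^{-k(2s-2l-1)}\|F\|^2_{H^s}$ for each $l$ separately (via the discrete norms of Lemma~\ref{sup-sum} applied to $d_x^jJ_k$), and then take $\Theta_{k,m}=\sum_l2^{-(k+2)l}C_l^{-1/2}\Gamma_{k,l,m}$, which makes \eqref{gkm1} tautological and reduces \eqref{gkm2} to Cauchy--Schwarz in $l$. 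Your treatment of \eqref{gkm2} itself (local $L^2\to L^\infty$ Bernstein with bounded overlap giving $\sum_m\Theta_{k,m}^2\lesssim 2^k\|Q_kg\|_{L^2}^2\lesssim 2^{-k(2s-1)}\|F\|_{H^s}^2$) is sound once $\Theta_{k,m}$ is defined in one of these tail-robust ways.
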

\begin{proof}
We define ${G_{-\infty}}$ and ${G_\infty}$ multiplying ${G}$ by a smooth
cutoff function, in such a way that ${G_{-\infty}} = {G}$ for $x < 1/32$ and
${G_\infty}(x) = {G}(x)$ for $x > 3$, $\supp({G_\infty}) \subset
[2,\infty)$.  We assume that  $\supp(F) \subset[1/2,1]$ so
 ${G}(x) = - {H}(x)$ for $x > 1$.  Thus estimate for
${G_\infty}$ is a consequence of the last claim of Lemma
\ref{G2-est}. Set ${J} = {G} - {G_{-\infty}} - {G_\infty}$. Then $\supp({J})
\subset [1/32,3]$,
$$
\|J\|_{H^s} \leq C\|F\|_{H^s}
$$
and
\begin{equation}\label{u-der}
\int |d_x^l {J}|^2 \leq C_l((\lambda)^{3/2})^{2(l - s)}\|F\|^2_{H^s}
\end{equation}
for all $l \ge s$.
Note that the last inequality for ${\tilde G}$ follows by
construction, since we cut off frequencies higher than
$(\lambda)^{3/2}/6$ from its Fourier transform.  Changing variable
yields the required estimates for ${J}$. Next let $\eta$ be a smooth
function which is $1$ on $\supp {J}$ and such that $\supp \eta \subset
[0,7/2]$. Recall that  $\psi$ is such a  function that ${\hat \psi}$
is smooth, $\supp({\hat \psi}) \subset [-1, 1]$, ${\hat \psi} = 1$
on $[-1/2, 1/2]$, $0 \leq {\hat \psi} \leq 1$, and ${\hat \psi}$ is
symmetric and that  $\psi_h(x) = h\psi(hx)$

Set
$$
{J}_0(x) = \eta(x)({J}*\psi)(x).
$$
Next we write
$$
{J}_k(x) =  \eta(x)({J}*(\psi_{2^k} - \psi_{2^{k-1}}))(x)
$$
for $ 1 \le k \le  \log_2( \lambda^{3/2}/6) -1$ and
$$
{J}_{k_0} ={J} - \sum_{0 \le k \le  \log_2( \lambda^{3/2}/6)
-1}{J}_k
$$
where $k_0$ is an integer such that $\lambda^{3/2}/12< 2^{k_0} \leq
\lambda^{3/2}/6$. It follows from the definition of ${J}_k$ that
$\supp {J}_k  \subset [0,7/2]$,
$$
{J} = \sum_{  0 \le k \le \log_2( \lambda^{3/2}/6)}{J}_k$$ and
\begin{equation}\label{u-der1}
\int |{d}_x^l {J}_k|^2 \leq C_l2^{2(l-s)k}\|F\|^2_{H^s}.
\end{equation}
Note that to get the last inequality for $k = k_0$ we
use \eqref{u-der}.
Now
let $u$ be a smooth function such that $u = 1$ on $[0,1/2]$,
$\supp(u) \subset [-1/2, 1]$ and $\sum_{m\in \Z} u(x - m) = 1$ for
all $x \in {\mathbb R}$.

Set
$${G_{k,m}}(x) = {J}_k(x)u(2^kx - m).$$
Since $\supp({J}_k) \subset [0, 7/2]$ so ${G_{k,m}} = 0$ for any   $m
\notin \Delta_k$.  Next put
$$
{\Gamma_{k,l,m}} = \sup_x|{d}_x^{l} {G_{k,m}}(x)|$$ and note that
$$
2^{-k}\sum_{m\in \Delta_k} ({\Gamma_{k,l,m}})^2 \le C \sum_{j=0}^l
2^{2(l-j)k}\|{d}_x^{j}{J}_k\|_{2^k, 2}^2
$$
where $\|{d}_x^{j}{J}_k\|_{2^k, 2}^2$ is the norm considered in Lemma \ref{sup-sum}. Now  by Lemma \ref{sup-sum}
$$\|{d}_x^{j}{J}_k\|_{2^k, 2}^2 \le C(\|{d}_x^{j}{J}_k\|_{2}^2+2^{-2k}
\|{d}_x^{j}{J}_k\|_{H^1}^2)\le C'(\|{d}_x^{j}{J}_k\|_{2}^2+2^{-2k}
\|{d}_x^{j+1}{J}_k\|_{2}^2)$$
Hence  \eqref{u-der1} yields
$$
\sum_{m\in \Delta_k} {\Gamma_{k,l,m}}^2 \leq C_{l}2^{-k(2s -2l - 1)}\|F\|^2_{H^s}.
$$
Next set
 $${\Theta_{k,m}} = \sum_{l =0}^\infty
2^{-(k+2)l}C_{l}^{-1/2}{\Gamma_{k,l,m}}.$$
Then by the Cauchy-Schwarz inequality
\begin{eqnarray*}
\sum_{m\in \Delta_k} {\Theta_{k,m}}^2=
\sum_{m\in \Delta_k} \left(     \sum_{l =0}^\infty
2^{-(k+2)l}C_{l}^{-1/2}{\Gamma_{k,l,m}}    \right)^2
 \le C
\sum_{m\in \Delta_k}   \left( \sum_{l =0}^\infty
2^{-2l} \right)      \left( \sum_{l =0}^\infty
  C_{l}^{-1}2^{-2(k+1)l}{\Gamma_{k,l,m}}^2  \right)
\\ \le C
\sum_{m\in \Delta_k}     \sum_{l =0}^\infty
C_{l}^{-1}2^{-2(k+1)l}{\Gamma_{k,l,m}}^2 \le C \sum_{l =0}^\infty 2^{-2(k+1)l}2^{-k(2s -2l - 1)}\|F\|^2_{H^s}.
\\ \leq C2^{-k(2s - 1)}\|F\|^2_{H^s}.
\end{eqnarray*}
Next we observe that
\begin{eqnarray*}
\sup_x|d_x^{{l'}} {G_{k,m}}(x)|
=c_{k,{l'},m}C_{{l'}}^{-1/2}2^{-(k+2){l'}}C_{{l'}}^{1/2}2^{(k+2){l'}}\\
\leq\sum_{l=0}^\infty 2^{-(k+2)l}C_{l}^{-1/2}{\Gamma_{k,l,m}}C_{{l'}}^{1/2}2^{(k+2){l'}}\\
\leq 4C_{{l'}}^{1/2}{\Theta_{k,m}}2^{k{l'}}.
\end{eqnarray*}
This proves the requested estimates for ${G_{k,m}}$.
\end{proof}


The next lemma is based on the finite propagation speed property for the wave equation. 

\begin{lemma}\label{AL}
Assume that  $0< \lambda/4
\le y$,  $\supp F  \subset [1/2, 1]$
and let ${G}$ be the function corresponding to $F$ and $\lambda$ defined by \eqref{GH}.
  Then
$$
{G}({\A}/\lambda)\delta_y= {G}(\LL/\lambda)\delta_y
$$
where $\A $ is the Airy operator and $\LL$ is the anharmonic operator defined by \eqref{op}
\end{lemma}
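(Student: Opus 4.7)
The plan is to prove this identity by exploiting the Fourier representation of $G$ together with the finite propagation speed property of the wave equation associated to each of $\LL$ and $\A$. The crucial structural fact is that $\LL$ and $\A$ agree as differential operators on $(0,\infty)$ (where $|x|=x$), so spectral multipliers applied to $\delta_y$ with $y$ far enough into the positive half-line should not be able to ``feel'' the difference between the potentials $|x|$ and $x$, provided the propagation is confined to $x>0$.

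First I would rewrite $G$ via the Fourier inversion. Since $\tilde G=\tilde F\ast\psi_{\lambda^{3/2}/6}$ is even and its Fourier transform is supported in $[-\lambda^{3/2}/6,\lambda^{3/2}/6]$, I can represent
\[
G(s)=\tilde G(\sqrt{s})=\frac{1}{\pi}\int_{0}^{\lambda^{3/2}/6}\widehat{\tilde G}(t)\,C_t(s)\,dt,
\]
where $C_t(s)=\cos(t\sqrt{s})$ for $s\ge0$ and $C_t(s)=\cosh(t\sqrt{-s})$ for $s<0$ (the entire extension of $s\mapsto\cos(t\sqrt{s})$). Spectral calculus for $\LL\ge 0$ then gives
\[
G(\LL/\lambda)=\frac{1}{\pi}\int_{0}^{\lambda^{3/2}/6}\widehat{\tilde G}(t)\cos\!\bigl(t\sqrt{\LL/\lambda}\bigr)dt,
\]
and spectral calculus for the self-adjoint $\A$ (whose spectrum is all of $\R$) gives the analogous formula with $C_t(\A/\lambda)$.

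The core of the proof is the pointwise identity, for each fixed $t$ with $|t|\le\lambda^{3/2}/6$,
\[
\cos\!\bigl(t\sqrt{\LL/\lambda}\bigr)\delta_y \;=\; C_t(\A/\lambda)\,\delta_y\qquad (y\ge\lambda/4),
\]
after which integrating against $\widehat{\tilde G}(t)$ concludes the argument. To prove it, I would set $u_\LL(t,\cdot)=\cos(t\sqrt{\LL/\lambda})\delta_y$ and $u_\A(t,\cdot)=C_t(\A/\lambda)\delta_y$. Both are the unique solutions of the respective wave equations $\partial_t^2 u+(\LL/\lambda)u=0$, $\partial_t^2 u+(\A/\lambda)u=0$ with initial data $\delta_y$ and vanishing time derivative. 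By the standard finite propagation speed for second-order hyperbolic operators of the form $\partial_t^2-\partial_x^2+V$ (with speed $1/\sqrt{\lambda}$ after the rescaling), each of $u_\LL(t,\cdot)$ and $u_\A(t,\cdot)$ is supported in $\{x:|x-y|\le |t|/\sqrt{\lambda}\}$. For $|t|\le\lambda^{3/2}/6$ this gives support in $[y-\lambda/6,y+\lambda/6]$, and since $y\ge\lambda/4$ this interval is contained in $[\lambda/12,\infty)\subset(0,\infty)$, where the potentials $|x|$ and $x$ coincide identically. Thus on this region both $u_\LL$ and $u_\A$ solve the \emph{same} hyperbolic equation with the \emph{same} Cauchy data $\delta_y, 0$; local uniqueness for strictly hyperbolic second-order equations forces $u_\LL(t,x)=u_\A(t,x)$ on the support, and both vanish off it, so the identity holds as distributions in $x$.

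The main obstacle is making the Airy-side wave propagator $C_t(\A/\lambda)\delta_y$ rigorous and verifying its finite propagation speed, because $\A$ has spectrum on $(-\infty,\infty)$ and $C_t$ grows like $\cosh$ on the negative part. This is resolved by noting that the kernel is given by the convergent Airy integral
\[
C_t(\A/\lambda)\delta_y(x)=\int_{-\infty}^{\infty}C_t(\mu/\lambda)\,\Ai(x-\mu)\,\Ai(y-\mu)\,d\mu,
\]
where the $\cosh(t\sqrt{-\mu/\lambda})$ growth for $\mu\to -\infty$ is swamped by the superexponential decay $\exp(-\tfrac{2}{3}(y-\mu)^{3/2}-\tfrac{2}{3}(x-\mu)^{3/2})$ of the Airy factors; and the resulting distribution-valued $t\mapsto C_t(\A/\lambda)\delta_y$ is the unique solution of the Airy wave equation with the stated Cauchy data, hence has the finite propagation speed required. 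Once these analytic facts are in place, the uniqueness-of-Cauchy-problem step and the integration against $\widehat{\tilde G}$ close the proof.
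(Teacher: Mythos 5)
Your proposal is correct and follows essentially the same route as the paper: represent $G(\LL/\lambda)={\tilde G}(\sqrt{\LL/\lambda})$ by Fourier inversion in the time variable, use that $\supp({\tilde G})^{\wedge}\subset[-\lambda^{3/2}/6,\lambda^{3/2}/6]$ so that the wave propagates a distance at most $\lambda/6<y$, and invoke finite propagation speed to replace $\LL$ by $\A$ on $(0,\infty)$ where the two operators coincide. Your additional care in defining $C_t(\A/\lambda)\delta_y$ despite $\A$ being unbounded below (absorbing the $\cosh$ growth on the negative spectrum into the superexponential Airy decay) addresses a point the paper passes over silently, and is a welcome refinement rather than a deviation.
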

\begin{proof}
It follow from the finite propagation speed  property for the wave equation
that
$$
\cos t \sqrt {\A} \delta_y = \cos t \sqrt {\LL} \delta_y
$$
for all $|t| \le y$.
Next if $F$ is an even function, then by the Fourier inversion formula,
$$
F(\sqrt {\LL}) =\frac{1}{2\pi}\int_{-\infty}^{\infty}
  \hat{F}(t) \cos(t\sqrt {\LL}) \;dt.
$$
Recall now that ${G}(z)={\tilde G}(\sqrt{z})$ and that the support
of the Fourier transform of ${\tilde G}$ is contained in the
interval $[-\lambda^{3/2}/6, \lambda^{3/2}/6]$, that is  $\supp
({\tilde G})^{\wedge} \subset [-\lambda^{3/2}/6, \lambda^{3/2}/6] $.
Hence if  we assume that $0< \lambda/4 \le y$ then
\begin{eqnarray*}
{G}(\LL/\lambda)\delta_y= {\tilde G}(\sqrt {{\LL}/\lambda})\delta_y
&=&\frac{1}{2\pi}\int_{-\infty}^{\infty}
   ({\tilde G})^{\wedge}(t) \cos(t\sqrt{{\LL}/\lambda}) \delta_y\;dt \\
   =\frac{1}{2\pi}\int_{-\lambda^{3/2}/6}^{\lambda^{3/2}/6}
   ({\tilde G})^{\wedge}(t) \cos(t\sqrt{{\LL}/\lambda}) \delta_y\;dt &=&
   \frac{1}{2\pi}\int_{-\lambda^{3/2}/6}^{c\lambda^{3/2}/6}
   ({\tilde G})^{\wedge}(t) \cos(t\sqrt{{\A}/\lambda}) \delta_y\;dt\\
   &=&{\tilde G}(\sqrt {{\A}/\lambda})\delta_y={G}({\A}/\lambda)\delta_y.
\end{eqnarray*}
This ends the proof of Lemma \ref{AL}.
\end{proof}

Now similarly as in Lemma \ref{2I}  we set  $\widetilde{I}_\lambda =
[\lambda, \infty)$ and
 define   $\chi_{\widetilde{I}_\lambda}$ as the characteristic function of the half-line  $\widetilde{I}_\lambda$.  Then  we denote
by  $\widetilde{I}_\lambda$ also  a projection acting on $L^p(\R)$ spaces defined by
$$
\widetilde{I}_\lambda f(x)= \chi_{\widetilde{I}_\lambda}f(x).
$$
Using this notation we can state Lemma \ref{AL} in the following way
$$
{G}(\LL/\lambda)\widetilde{I}_{\lambda/4}=
{G}({\A}/\lambda)\widetilde{I}_{\lambda/4}.
$$

Note that in virtue of Proposition \ref{I/4} we can assume that $y \in \widetilde{I}_{\lambda/4}$ and this allows us to replace multipliers of the operator $\LL$ by 
spectral multipliers of the Airy operator ${\A}$ by Lemma \ref{AL}. Note next that 
by Lemma \ref{2I} it is enough to consider only a ${L^1(I_{2\lambda})}$ portion of 
the whole $L^1$ norm of the considered kernel. 


\begin{lemma}
Let ${\A}$ be the Airy operator  defined by \eqref{Airy} and 
${G_{-\infty}}$ be the function defined in Lemma~\ref{G1-decomp}.
Then there exists a constant $C>0$, such that
$$
\|{G_{-\infty}}({\A}/ \lambda)\delta_y\|_{L^1(I_{2\lambda})} \leq
C\|F\|_{L^2}
$$
for all $\lambda \geq 4$ and all $y \geq \lambda/4$.
\end{lemma}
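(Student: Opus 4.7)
The plan is to estimate the kernel $K(x,y)$ of $G_{-\infty}(\A/\lambda)$ directly from the spectral representation of Lemma \ref{poz}. Since $\supp G_{-\infty}\subset(-\infty,1/16]$, we have
\begin{equation*}
K(x,y)=\int_{-\infty}^{\lambda/16}G_{-\infty}(\lambda'/\lambda)\Ai(x-\lambda')\Ai(y-\lambda')\,d\lambda',
\end{equation*}
and I would bound $\int_{-2\lambda}^{2\lambda}|K(x,y)|\,dx$ by the triangle inequality, splitting the $\lambda'$-integral into $[0,\lambda/16]$, $[-10\lambda,0)$, and $(-\infty,-10\lambda)$. The strategy is to show that in each of these regions the exponential decay of $\Ai(y-\lambda')$ (guaranteed since $y-\lambda'\ge 3\lambda/16>0$ throughout) overwhelms both the potential growth of $G_{-\infty}$ on the negative half-line and every polynomial factor in $\lambda$.

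For the factor $G_{-\infty}(\lambda'/\lambda)$, on $\lambda'\in[0,\lambda/16]$ I exploit that $\sqrt{\lambda'/\lambda}\in[0,1/4]$ lies at positive distance from $\supp\tilde F\subset[-1,-1/\sqrt 2]\cup[1/\sqrt 2,1]$, so the rapid decay of the Schwartz kernel $\psi_{\lambda^{3/2}/6}$ forces $|G_{-\infty}(\lambda'/\lambda)|\le C_N\lambda^{-N}\|F\|_{L^2}$ for every $N$. For $\lambda'<0$ I use the entire-function bound from Lemma \ref{G1-neg} evaluated at $z=i\sqrt{|\lambda'|/\lambda}$ (recall $\tilde G$ is real on both real and imaginary axes by symmetry), which yields $|G_{-\infty}(\lambda'/\lambda)|\le C\lambda^{3/2}\|F\|_{L^2}\exp(\lambda|\lambda'|^{1/2}/6)$. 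The factor $|\Ai(y-\lambda')|$ is controlled by \eqref{e2.5} as $C(y-\lambda')^{-1/4}\exp(-2(y-\lambda')^{3/2}/3)$. The $x$-integral $\int_{-2\lambda}^{2\lambda}|\Ai(x-\lambda')|\,dx$ is bounded by $C\lambda^{3/4}$ when $|\lambda'|\le 10\lambda$ (using $|\Ai(u)|\lesssim(1+|u|)^{-1/4}$ on $u<0$ together with $\int_0^\infty|\Ai|<\infty$) and gains an extra exponential factor in $|\lambda'|^{3/2}$ when $|\lambda'|>10\lambda$.

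The crux of the argument, and the main obstacle, is to show that on the middle region $\lambda'\in[-10\lambda,0)$ the growth $\exp(\lambda|\lambda'|^{1/2}/6)$ of $G_{-\infty}$ is genuinely defeated by the decay $\exp(-2(y-\lambda')^{3/2}/3)$ of the Airy function. Parametrising $|\lambda'|=\lambda s$ and taking the worst case $y=\lambda/4$, the total exponent becomes $\lambda^{3/2}f(s)$ with
\begin{equation*}
f(s)=\frac{s^{1/2}}{6}-\frac{2}{3}\Bigl(\tfrac{1}{4}+s\Bigr)^{3/2},
\end{equation*}
and elementary calculus gives $\sup_{s\ge 0}f(s)<0$ (the critical point lies near $s\approx 0.025$ with value $\approx -0.07$, while $f(0)=-1/12$ and $f(s)\to -\infty$ at infinity). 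Hence in this region the product of the two exponential factors is at most $\exp(-c_0\lambda^{3/2})$ for some $c_0>0$ independent of $s$, which easily absorbs the polynomial pre-factors $\lambda^{3/2}$ (from $G_{-\infty}$), $\lambda^{3/4}$ (from the $x$-integral), and $\lambda$ (from integrating $d\lambda'$). The far tail $|\lambda'|>10\lambda$ is handled similarly, since there $|\lambda'|^{3/2}\gg \lambda|\lambda'|^{1/2}$ already and both Airy factors contribute exponential decay. Combining the three regions gives the bound $C\|F\|_{L^2}$ for all $\lambda\ge 4$.
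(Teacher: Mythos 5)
Your proposal is correct and follows the same basic route as the paper: represent the kernel via the Airy transform (Lemma \ref{poz}), control $G_{-\infty}$ on the negative spectral half-line by the entire-function bound of Lemma \ref{G1-neg} and on $[0,\lambda/16]$ by the behaviour of $H$ near the origin, and beat everything with the super-exponential decay of $\Ai(y-\lambda')$, which is available precisely because $y\ge\lambda/4$ keeps its argument positive. Two points of difference are worth recording. First, you estimate the $L^1(I_{2\lambda})$ norm directly by Fubini, paying a factor $\lambda^{3/4}$ for $\int_{-2\lambda}^{2\lambda}|\Ai(x-\lambda')|\,dx$, whereas the paper bounds the $L^2$ norm of $G_{-\infty}(\lambda^{-1}\cdot)\check{\Ai}(\cdot-y)$ (which equals the $L^2$ norm of the kernel by the Airy--Plancherel identity) and then applies Cauchy--Schwarz on $I_{2\lambda}$; both are harmless since the decay is of order $\exp(-c\lambda^{3/2})$. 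Second, and more substantively, your treatment of the competition between $\exp(\lambda|\lambda'|^{1/2}/6)$ and $\exp(-\tfrac{2}{3}(y+|\lambda'|)^{3/2})$ is the careful one: note that Lemma \ref{G1-neg} applied at $z=\lambda'/\lambda$ gives the exponent $\lambda^{3/2}|\lambda'/\lambda|^{1/2}/6=\lambda|\lambda'|^{1/2}/6$, whereas the paper writes $\lambda^{1/2}|x|^{1/2}/6$ and then disposes of it by a crude arithmetic--geometric mean step; with the correct exponent that AM--GM bound is only borderline ($\lambda|\lambda'|^{1/2}\le 4(|\lambda'|+|y|)^{3/2}$, exactly matching $\tfrac{2}{3}\cdot 6$). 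Your explicit optimization of $f(s)=s^{1/2}/6-\tfrac{2}{3}(1/4+s)^{3/2}$, showing $\sup_{s\ge0}f(s)<0$, is exactly what is needed to close this and repairs what appears to be a slip in the published argument. The remaining details (rapid decay of $\tilde F*\psi_{\lambda^{3/2}/6}$ away from $\supp\tilde F$, the far tail $|\lambda'|>10\lambda$) are all correct.
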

\begin{proof}
By Lemma \ref{poz}
$$
{G_{-\infty}}({\A}/\lambda)\delta_y(\cdot) =
\Ai*({G_{-\infty}}(\lambda^{-1}\cdot){\check \Ai}(\cdot - y)).
$$
Recall that by \eqref{e2.5} we have $|{\check \Ai}(x - y)| \leq
C\exp(-(2/3)(|x| + |y|)^{3/2})$ for all $x\leq 0$ and $y \geq \lambda/4 >0$ . Since $|{G_{-\infty}}|
\leq |{G}|$ so by Lemma \ref{G1-neg}
$$
|{G_{-\infty}}(\lambda^{-1}x){\check \Ai}(x - y)| \leq
C\lambda^{3/2}\exp(\lambda^{1/2}|x|^{1/2}/6) \exp(-(2/3)(|x| +
|y|)^{3/2})\|F\|_{L^2}.
$$
By the inequality between arithmetic and geometric means
$\lambda^{1/2}|x|^{1/2} \leq (\lambda + |x|)/2$. Since $|y| \geq
\lambda/4$, we get $\lambda^{1/2}|x|^{1/2} \leq |y| + |x|$. Thus  if
$|y| > 1$ then
$$\lambda^{1/2}|x|^{1/2}/6 \leq (1/3)(|x| + |y|)^{3/2}$$
so
\begin{eqnarray*}
|{G_{-\infty}(\lambda^{-1}x)}{\check \Ai}(x - y)| \leq
C\lambda^{3/2}\exp(-(1/3)(|x| + |y|)^{3/2})\|F\|_{L^2}
\\
\leq
C\lambda^{3/2}\exp(-(1/6)|y|^{3/2})\exp(-(1/6)|x|^{3/2})\|F\|_{L^2}.
\end{eqnarray*}
Hence
$$
\|{G_{-\infty}(\lambda^{-1}\cdot)}{\check \Ai}(\cdot -
y)\|_{L^2((-\infty, 0])} \leq
C'\lambda^{3/2}\exp(-\lambda^{3/2}/24)\|F\|_{L^2}.
$$

Next for $x \in [0, 1/16]$ by the second claim of Lemma \ref{G2-est}
$$
|{G}(x)| = |-H(x)|\leq \sup_{0\leq x\leq 1/4}|H(x^2)| \leq C'\|F\|_{L^2},
$$
so $|{G_{-\infty}}(x)| \leq C'\|F\|_{L^2}$.  Consequently for $x \in [0,
\lambda/16]$
\begin{eqnarray*}
|{G_{-\infty}}(\lambda^{-1}x){\check \Ai}(x - y)| \leq C\exp(-(1/3)(|x -
y|^{3/2})\|F\|_{L^2}
\\
\leq C\exp(-\lambda^{3/2}/24)\|F\|_{L^2}.
\end{eqnarray*}
However by Lemma~\ref{G1-decomp}  $\supp {G_{-\infty}} \subset
(-\infty,1/16]$ so
$$
\|{G_{-\infty}}(\lambda^{-1}\cdot){\check \Ai}(\cdot - y)\|_{L^2([0,\infty)}
\leq C\lambda^{1/2}\exp(-\lambda^{3/2}/24)\|F\|_{L^2}
$$
Combining estimates on $(-\infty, 0]$ and $[0,\infty)$ yields
$$
\|{G_{-\infty}}(\lambda^{-1}\cdot){\check \Ai}(\cdot - y)\|_{L^2} \leq
C\exp(-\lambda)\|F\|_{L^2}.
$$
Hence
$$
\|{G_{-\infty}}({\A}/\lambda)\delta_y\|_{L^2} \leq
C\exp(-\lambda)\|F\|_{L^2}
$$
and
\begin{eqnarray*}
\|{G_{-\infty}}({\A}/\lambda)\delta_y\|_{L^1(I_{2\lambda})} \leq
|I_{2\lambda} |^{1/2}\|{G_{-\infty}}({\A}/\lambda)\delta_y\|_{L^2}
\\
\leq
C\lambda^{1/2}\exp(-\lambda)\|F\|_{L^2} \leq C'\|F\|_{L^2}.
\end{eqnarray*}
\end{proof}

\begin{lemma}
Let $\A$ be the Airy operator defined by \eqref{Airy} and 
${G_{\infty}}$ be the function defined in Lemma~\ref{G1-decomp}. Then there exists a constant $C>0$, such that for all
$\lambda \geq 1$
$$
\|{G_\infty}({\A}/\lambda)\delta_y\|_{L^1(I_{2\lambda})} \leq
C\|F\|_{L^2}.
$$
\end{lemma}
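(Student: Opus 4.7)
The plan is to exploit the fact that $G_\infty$ inherits the rapid decay established for $H$ in Lemma~\ref{G2-est}, so that even after bounding crudely by the Plancherel-type identity for the Airy transform from Lemma~\ref{poz} and paying a factor $|I_{2\lambda}|^{1/2}$ through Cauchy--Schwarz, we still win. The key observation is that in Lemma~\ref{G1-decomp} the function $G_\infty$ is a smooth cutoff of $G$ supported in $[2,\infty)$, with $G_\infty(x) = G(x)$ for $x\ge 3$, and since $\supp F \subset [1/2,1]$ we have $G(x) = -H(x)$ for $x>1$. Therefore the third claim of Lemma~\ref{G2-est} transfers directly, giving for every $l\in\N$ a constant $C_l$ with
$$
|G_\infty(x)| \le C_l \lambda^{-l} (1+|x|)^{-l} \|F\|_{L^2}, \qquad x \ge 2.
$$
In particular $\|G_\infty\|_{L^2(\R)} \le C_l \lambda^{-l}\|F\|_{L^2}$ for all $l\in\N$.

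Next I would use Lemma~\ref{poz} to pass to the Airy transform side. Since $\mathcal{T}$ is an isometry on $L^2(\R)$ and $\mathcal{T}(\delta_y)(\mu) = \Ai(y-\mu)$, one has
$$
\|G_\infty(\A/\lambda)\delta_y\|_{L^2(\R)}^2 = \int_{\R} |G_\infty(\mu/\lambda)|^2 |\Ai(y - \mu)|^2 \wrt\mu.
$$
Using that the Airy function is bounded on $\R$ and substituting $s = \mu/\lambda$ yields
$$
\|G_\infty(\A/\lambda)\delta_y\|_{L^2(\R)}^2 \le C \lambda \int_{\R} |G_\infty(s)|^2 \wrt s = C\lambda \|G_\infty\|_{L^2}^2 \le C_l \lambda^{1-2l}\|F\|_{L^2}^2.
$$

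Finally I would apply the Cauchy--Schwarz inequality on the interval $I_{2\lambda}$, whose length is $4\lambda$:
$$
\|G_\infty(\A/\lambda)\delta_y\|_{L^1(I_{2\lambda})} \le |I_{2\lambda}|^{1/2}\,\|G_\infty(\A/\lambda)\delta_y\|_{L^2(\R)} \le C_l\,\lambda^{1-l}\|F\|_{L^2}.
$$
Choosing any $l \ge 1$ gives $\lambda^{1-l}\le 1$ for $\lambda\ge 1$, and the conclusion $\|G_\infty(\A/\lambda)\delta_y\|_{L^1(I_{2\lambda})} \le C\|F\|_{L^2}$ follows.

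There is essentially no obstacle here; the lemma captures a ``tail'' piece of the decomposition of $F$, and because $G_\infty$ lives outside $\supp F$ together with its rapidly decaying remainder $H$, all factors of $\lambda$ that could create trouble are swallowed by the super-polynomial decay inherited from Lemma~\ref{G2-est}. The only small point to verify carefully is that the cutoff used to define $G_\infty$ on $[2,3]$ does not destroy the pointwise decay estimate of $H$, which is immediate since the cutoff is bounded by $1$ and $G = -H$ on that interval.
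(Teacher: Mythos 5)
Your proof is correct and follows essentially the same route as the paper: pass to the Airy transform side via Lemma~\ref{poz}, use the rapid decay of $G_\infty=-H\eta$ on $[2,\infty)$ supplied by the third claim of Lemma~\ref{G2-est} to make the $L^2$ norm of $G_\infty({\A}/\lambda)\delta_y$ decay in $\lambda$, and finish with Cauchy--Schwarz on $I_{2\lambda}$. The only cosmetic difference is that the paper keeps the $(1+|x-y|)^{-1/4}$ decay of the Airy function and fixes $l=4$, whereas you bound $|\Ai|$ by a constant and take $l$ large; both yield the required uniform bound.
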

\begin{proof}
By Lemma \ref{poz}
$$
{G_\infty}({\A}/\lambda)\delta_y(\cdot) =
\Ai*({G_\infty}(\lambda^{-1}\cdot){\check \Ai}(\cdot - y)).
$$
We have ${G_\infty}(x)= {H}(x)\eta(x)$ where $\eta(x)$ is a smooth
cutoff function supported on $[2,\infty)$. By Lemma \ref{G2-est}
$$
\sup_{x>2}(|{G_\infty}|(x) )x^4 \leq
C(\lambda^{3/2})^{-2}\|F\|_{L^2}.
$$
Consequently for all  $x \in [2\lambda, \infty)$,
\begin{eqnarray*}
|{G_\infty}(x/\lambda){\check \Ai}(x - y)| &\leq&
C(x/\lambda)^{-4}\lambda^{-3}(1+|x - y|)^{-1/4}\|F\|_{L^2}.
\end{eqnarray*}
and
$$
\|{G_\infty}(\lambda^{-1}\cdot){\check \Ai}(\cdot - y)\|_{L^2} \leq
C\lambda^{-5/2}\|F\|_{L^2}
$$
Thus
$$
\|{G_\infty}({\A}/\lambda)\delta_y\|_{L^2} \leq
C\lambda^{-5/2}\|F\|_{L^2}
$$
and by $\lambda\geq1$
\begin{eqnarray*}
\|{G_\infty}({\A}/\lambda)\delta_y\|_{L^1(I_{2\lambda})} \leq
|I_{2\lambda}|^{1/2}\|{G_\infty}({\A}/\lambda)\delta_y\|_{L^2}
\\
\leq C\lambda^{1/2-5/2}\|F\|_{L^2} \leq C'\|F\|_{L^2}.
\end{eqnarray*}
\end{proof}

For the rest of this section we set $a = 2^{-k}\lambda$. Parameter $a$ shall always
play the same role as in Proposition   \ref{A-est}.

\begin{lemma}\label{gkm-est}
Let ${\A}$ be the Airy operator and  ${G_{k,m}}$ be functions
defined in Lemma \ref{G1-decomp}. For any $l>0$ there exists $C_{l}$
such that if $a \geq \min(1, |y - ma|^{-1/2})$, then
$$
|{G_{k,m}}({\A}/\lambda)\delta_y|(x) \leq C_l{\Theta_{k,m}}
\frac{d^{-1}}{(1+d^{-1}|x - y|)^{l}}
\left(1+\frac{|y-ma|}{1+|x-ma|}\right)^{\frac{1}{4}}
$$
where $d = \max(a^{-1/2}, |y - ma|^{1/2}/a)$
and ${\Theta_{k,m}}$ are constants from Lemma \ref{G1-decomp}. If $a \leq \min(1,
|y - ma|^{-1/2})$, then
\begin{eqnarray*}
|{G_{k,m}}({\A}/\lambda)\delta_y|(x) \leq C_{l}{\Theta_{k,m}}a(1 + |y -
ma|)^{-1/4}(1+ |x - ma|)^{-1/4}(1 + a^{2}|x - ma|)^{-l}.
\end{eqnarray*}
\end{lemma}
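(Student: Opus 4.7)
The plan is to reduce the estimate for $G_{k,m}(\A/\lambda)\delta_y$ to a direct application of Proposition~\ref{A-est} after translating the spectral parameter by $ma$. Recall $a = 2^{-k}\lambda$, so by \eqref{l6.5} the rescaled multiplier $\xi\mapsto G_{k,m}(\xi/\lambda)$ is supported in $[(m-1)a,(m+1)a]$; this is not centred at $0$ and so Proposition~\ref{A-est} does not apply directly. However, the Airy operator satisfies the simple covariance relation $S_{ma}^{-1}\A S_{ma} = \A + ma$, where $S_{ma}$ denotes translation by $ma$. This suggests defining the shifted symbol
\begin{equation*}
\widetilde w(\xi) := G_{k,m}\!\left(\frac{\xi + ma}{\lambda}\right),
\end{equation*}
so that $\supp\widetilde w \subset[-a,a]$ and $G_{k,m}(\A/\lambda)=\widetilde w(\A-ma)$.

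First, I would translate the above operator identity into kernel form using the description from Lemma~\ref{poz}. A substitution $\mu=\nu-ma$ in the integral representation yields
\begin{equation*}
K_{G_{k,m}(\A/\lambda)}(x,y)
= \int G_{k,m}(\nu/\lambda)\,\Ai(x-\nu)\Ai(y-\nu)\,d\nu
= K_{\widetilde w(\A)}(x-ma,\,y-ma),
\end{equation*}
which reduces the problem to estimating the Airy multiplier kernel of $\widetilde w(\A)$ at the shifted points $(x-ma,y-ma)$. Note that $(x-ma)-(y-ma)=x-y$, so the decay variable in the final bound does indeed remain $|x-y|$.

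Next I would verify that $\widetilde w$ satisfies the hypothesis \eqref{con_w} of Proposition~\ref{A-est} with the constant $\Theta_{k,m}$ factored out. Using \eqref{gkm1} together with the chain rule,
\begin{equation*}
|d_\xi^{\,l}\widetilde w(\xi)| = \lambda^{-l}\bigl|G_{k,m}^{(l)}((\xi+ma)/\lambda)\bigr| \le C_l\,\Theta_{k,m}\bigl(2^k/\lambda\bigr)^l = C_l\,\Theta_{k,m}\,a^{-l},
\end{equation*}
which is precisely \eqref{con_w} scaled by $\Theta_{k,m}$; the constants $C_l$ do not depend on $a$, $m$, $\lambda$ or $k$.

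Finally I would apply Proposition~\ref{A-est} to $\widetilde w(\A)$ at the shifted points $(x-ma,y-ma)$: in the regime $a\ge\min(1,|y-ma|^{-1/2})$ Part A) with $d=\max(a^{-1/2},|y-ma|^{1/2}/a)$ yields
\begin{equation*}
|K_{\widetilde w(\A)}(x-ma,y-ma)|
\le C_l\,\Theta_{k,m}\,\frac{d^{-1}}{(1+|x-y|/d)^{l}}\left(1+\frac{|y-ma|}{1+|x-ma|}\right)^{1/4},
\end{equation*}
which is the first claim; the second claim follows from Part B) in the same way. The only technical point worth double-checking is that the constants $C_l$ in Proposition~\ref{A-est}, which are tied to the $C_k$ of \eqref{con_w}, are stable under the substitution $C_k \mapsto C_k\Theta_{k,m}$ so that $\Theta_{k,m}$ cleanly factors out of the final estimate; this is transparent from the proof of Proposition~\ref{A-est}, since everything there is linear in the bounding constants of $w$. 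Once this bookkeeping is checked, the statement follows.
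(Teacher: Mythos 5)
Your proposal is correct and follows exactly the paper's argument: the paper also proves this lemma by the translation covariance $w(\A)\delta_y(x)=w(\A+r)\delta_{y-r}(x-r)$ with $r=ma$, reducing to Proposition~\ref{A-est} applied to the recentred symbol supported in $[-a,a]$, whose derivatives are controlled by $C_l\Theta_{k,m}a^{-l}$ via \eqref{gkm1}. Your write-up merely spells out the support, derivative, and shifted-variable bookkeeping that the paper's three-line proof leaves implicit.
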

\begin{proof}
For any $w \in L^2({\mathbb R})$ we have
$$
w({\A})\delta_y(x) = w({\A} + r)\delta_{y - r}(x - r).
$$
Put $r = ma$ and $w(x) = {G_{k,m}}(\lambda^{-1}(x - ma))$.  Now in
virtue of Lemma~\ref{G1-decomp} we can apply  Proposition
\ref{A-est}.
\end{proof}

Next to investigate $L^p$ properties of the operator ${G}(\A)$  we
are going to decompose it using
 Lemma \ref{G1-decomp}.  For all $0 \le k \le \log_2( \lambda^{3/2}/6)$ we set
 \begin{equation}\label{TK}
T_k = \sum_{m\in \Delta_k }  {G_{k,m}}({\A}/\lambda)
\end{equation}
where ${G_{k,m}}$ are functions defined in Lemma~\ref{G1-decomp}.

\begin{lemma}\label{Tk-1/2}
Let $T_k$ be operator defined by \eqref{TK} corresponding to
functions ${G}$ and $F$ described in Lemma~\ref{G1-decomp}. Assume
further that  $0 < \varepsilon < s - 1/2$. Then there exists
constant~$C$ such that
$$
\|T_k\delta_y\|_{L^1} \leq C2^{-\varepsilon k}\|F\|_{H^s}
$$
for all $k$ such that $2^k \leq \lambda$ and all $y \in I_{2\lambda}$.
\end{lemma}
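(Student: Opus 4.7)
Since $\sum_{m\in\Delta_k}G_{k,m}=J_k$, we have $T_k = J_k(\A/\lambda)$, so two complementary pieces of information are available: the pointwise kernel bound on each $G_{k,m}(\A/\lambda)\delta_y$ from Lemma~\ref{gkm-est}, and the $L^2$ norm of $T_k\delta_y$ obtained from the Airy Plancherel identity of Lemma~\ref{poz}. My strategy is to combine them on a judiciously chosen core interval around $y$ and its tail.

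First I would establish the $L^2$ bound
$$
\|T_k\delta_y\|_{L^2}^2 \,=\, \int |J_k(\nu/\lambda)|^2|\Ai(y-\nu)|^2\,d\nu \,\leq\, C\lambda^{1/2}\|J_k\|_\infty^2 \,\leq\, C\lambda^{1/2}\cdot 2^{-2k(s-1/2)}\|F\|_{H^s}^2.
$$
The first inequality uses the Airy asymptotics $|\Ai(z)|^2\leq C(1+|z|)^{-1/2}$ derived from \eqref{e2.5}--\eqref{e2.6} together with $\int_0^{7\lambda/2}(1+|y-\nu|)^{-1/2}d\nu\leq C\sqrt\lambda$. The second uses Bernstein for $J_k$ (frequency-localized at scale $2^k$): $\|J_k\|_\infty\leq C 2^{k/2}\|J_k\|_{L^2}\leq C 2^{-k(s-1/2)}\|F\|_{H^s}$, where $\|J_k\|_{L^2}\leq C 2^{-ks}\|F\|_{H^s}$ is the $l=0$ case of \eqref{u-der1} from the proof of Lemma~\ref{G1-decomp}.

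I would then split $\R=B\cup B^c$ with $B=[y-\rho,y+\rho]$ and $\rho=\lambda^{-1/2}$. On $B$, Cauchy--Schwarz gives $\|T_k\delta_y\|_{L^1(B)}\leq(2\rho)^{1/2}\|T_k\delta_y\|_{L^2}\leq C\cdot 2^{-k(s-1/2)}\|F\|_{H^s}$, as the $\lambda^{1/4}$ factor cancels the $\rho^{1/2}=\lambda^{-1/4}$. On $B^c$ I would use the pointwise bound of Lemma~\ref{gkm-est} on each $G_{k,m}(\A/\lambda)\delta_y$, partition $B^c$ into dyadic annuli $A_j=\{2^j\rho\leq|x-y|<2^{j+1}\rho\}$, and group the indices $m$ by the scale $d_m\sim 2^{j'}a^{-1/2}$ (equivalently $b_m=|y-ma|\sim 2^{2j'}a$). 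On $A_j$, pieces with $d_m\lesssim 2^j\rho$ are strongly suppressed by the polynomial factor $(d_m/(2^j\rho))^{l-1}$ for $l$ large, while those with $d_m\gtrsim 2^j\rho$ are summed via Cauchy--Schwarz using the bound $\sum_m\Theta_{k,m}^2\leq C 2^{-k(2s-1)}\|F\|_{H^s}^2$ from Lemma~\ref{G1-decomp} (restricted to the $j'$-th group). Summing the resulting geometric series in $j$ and $j'$ and absorbing any logarithmic factor into an arbitrarily small power $2^{\delta k}$ with $\delta\in(0,s-1/2-\varepsilon)$ yields the required bound $C\cdot 2^{-\varepsilon k}\|F\|_{H^s}$.

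\textbf{Main obstacle.} The naive triangle inequality $\|T_k\delta_y\|_{L^1}\leq\sum_m\|G_{k,m}(\A/\lambda)\delta_y\|_{L^1}\leq C\sum_m\Theta_{k,m}$, evaluated by Cauchy--Schwarz over the $|\Delta_k|\sim 2^k$ indices, yields only $2^{-k(s-1)}\|F\|_{H^s}$, which is insufficient when $s\in(1/2,1]$. Cancellation between the different pieces must therefore be captured via the Airy $L^2$ estimate on the core $B$, and the annular decomposition on the tail must be arranged so that the geometric series in both the distance scale $2^j\rho$ and the piece-width scale $d_m$ converge without reintroducing the spurious factor $2^{k/2}$ of the naive argument. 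Matching the core and tail contributions sharply — so that the cancellation identity $T_k=J_k(\A/\lambda)$ delivers the exponent $s-1/2$ rather than $s-1$ — is the delicate step.
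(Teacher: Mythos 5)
Your core estimate and your global $L^2$ bound are both correct: the identity $T_k=J_k(\A/\lambda)$ together with Lemma~\ref{poz}, the Airy decay and Bernstein for the frequency-localized $J_k$ does give $\|T_k\delta_y\|_{L^2}^2\leq C\lambda^{1/2}2^{-2k(s-1/2)}\|F\|_{H^s}^2$, and Cauchy--Schwarz on $B=[y-\lambda^{-1/2},y+\lambda^{-1/2}]$ then yields the right bound there. The genuine gap is in the tail. Since $2^k\leq\lambda$ we have $a=2^{-k}\lambda\geq 1$, so every $d_m=\max(a^{-1/2},|y-ma|^{1/2}/a)$ satisfies $d_m\geq a^{-1/2}=2^{k/2}\rho\gg\rho$: the bulk of the mass of \emph{every} piece $G_{k,m}(\A/\lambda)\delta_y$ lives at distance $\sim d_m$ from $y$, i.e.\ entirely inside your region $B^c$. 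On that bulk region the pointwise bound of Lemma~\ref{gkm-est} contributes the full $L^1$ mass $\sim\Theta_{k,m}$ of the $m$-th piece, and no rearrangement into annuli $A_j$ and groups $j'$ changes this: whichever way you order the sums, Cauchy--Schwarz over the $\sim 2^k$ indices converts $\sum_m\Theta_{k,m}$ into $2^{k/2}\bigl(\sum_m\Theta_{k,m}^2\bigr)^{1/2}\sim 2^{-k(s-1)}\|F\|_{H^s}$, which is exactly the "naive" loss you identify and which fails for $1/2<s\leq 1$. Your claimed geometric series in $j$ and $j'$ do not converge to anything better; the polynomial suppression $(d_m/(2^j\rho))^{l-1}$ only helps for annuli \emph{beyond} $d_m$, not on the annulus $|x-y|\sim d_m$ where the piece actually lives.

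What is missing is an $L^2$ estimate with cancellation \emph{inside each group}, applied on a core whose radius is adapted to the group rather than fixed at $\lambda^{-1/2}$. The paper groups the indices by $\Omega_n=\{m:|y/a-m|^{1/2}\sim 2^n\}$ (your $j'$), and for $T_k^{\Omega_n}=\sum_{m\in\Omega_n}G_{k,m}(\A/\lambda)$ uses the Airy--Plancherel identity together with the fact that the spectral support of $t_k^{\Omega_n}(\cdot/\lambda)$ sits at distance $\gtrsim 2^{2n}a$ from $y$, where $|\Ai(y-\nu)|^2\lesssim 2^{-n}a^{-1/2}$; this gives $\|T_k^{\Omega_n}\delta_y\|_{L^2}^2\leq C2^{-n}a^{1/2}\sum_{m\in\Omega_n}\Theta_{k,m}^2$. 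Cauchy--Schwarz in $x$ over the ball $|x-y|\leq 2^{k\varepsilon'}d_n$ with $d_n=2^na^{-1/2}$ then produces $\bigl(2^{k\varepsilon'}d_n\cdot 2^{-n}a^{1/2}\sum_{m\in\Omega_n}\Theta_{k,m}^2\bigr)^{1/2}=\bigl(2^{k\varepsilon'}\sum_{m\in\Omega_n}\Theta_{k,m}^2\bigr)^{1/2}$: the $2^{n/2}$ from the growing radius exactly cancels the $2^{-n/2}$ from the Airy decay, and only the far region $|x-y|>2^{k\varepsilon'}d_n$ is treated by the pointwise bounds of Lemma~\ref{gkm-est}. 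Since there are only $O(k)$ nonempty groups, summing costs a factor $k$, absorbed into $2^{-k\varepsilon}$. This per-group $L^2$ cancellation at the kernel level cannot be recovered from the pointwise bounds plus Cauchy--Schwarz over the coefficients, so as written your argument does not close.
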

\begin{proof}
We begin with  decomposing the set $\Delta_k$ in the following way. We
set
$$\Omega_0 = \left\{m\in \Delta_k : {\left|\frac{y}{a} - m\right|} \le 1 \right\}$$
and
$$
\Omega_n =  \left\{m\in \Delta_k : 2^{n-1} < {\left|\frac{y}{a} - m\right|^{1/2}} \leq 2^{n}\right\}
 $$
 for $n>0$.
Then we accordingly decompose operator $T_k$  setting
$$
T_{k}^{\Omega_n} = \sum_{m \in \Omega_n} {G_{k,m}}({\A}/\lambda).
$$
It is enough to prove that
\begin{equation}\label{6.9zero}
\|T_{k}^{\Omega_n}\delta_y\|_{L^1} \leq C2^{-\varepsilon k}\|F\|_{H^s}
\end{equation}
for all $n\in \N$. Indeed, note that if $y \in I_{2\lambda}$ then $ 2^{2(n-1)} < |{2^ky}/{\lambda}| +m
\le 2^{k+1}+2^{k+2} \le 2^{k+3}$.
 Hence $\Omega_n=\emptyset$ unless  $2(n-1) \le k+3$ so given \eqref{6.9zero}
we get
$$
\|T_k\delta_y\|_{L^1} \leq \sum_{n} \|T_{k}^{\Omega_n}\delta_y\|_{L^1}
\leq Ck2^{-\varepsilon k}\|F\|_{H^s}
$$
which yields  the claim for   any $0< \varepsilon' <\varepsilon $.

\medskip

To show \eqref{6.9zero}, firstly note that if $2^k \le \lambda$ then
$a=2^{-k}\lambda \geq1 \ge \min(1, |y - ma|^{-1/2})$. Hence by Lemma
\ref{gkm-est} for  $m \in \Omega_n$ and $n\ge 0$ we have
\begin{eqnarray}\label{C.11Aprl_Gkm}
|{G_{k,m}}({\A}/\lambda)\delta_y| \leq
C_l{\Theta_{k,m}}\frac{d_n^{-1}}{(1 + d_n^{-1}|x -
y|)^{l}}\Big(1+\frac{|y-ma|}{1+|x-ma|}\Big)^{\frac{1}{4}}
\end{eqnarray}
where $d_n = 2^{n+k/2}/\sqrt{\lambda}=2^n/\sqrt{a}$. For
$0<\varepsilon' \leq s - 1/2 - \varepsilon$, set $r =
2^{k\varepsilon'}d_n$
 and
$l> 1 + 1/(2\varepsilon')$. Then by estimate~\eqref{C.11Aprl_Gkm}
and Lemma~\ref{G1-decomp}
\begin{eqnarray}
\nonumber \int_{|x - y| > r}|T_{k}^{\Omega_n}\delta_y| &\leq& \sum_{m
\in \Omega_n} C_l{\Theta_{k,m}}\int_{|x - y| > r}\frac{d_n^{-1}}{(1
+d_n^{-1} |x - y|)^{l}}
\Big(1+\frac{|y-ma|}{1+|x-ma|}\Big)^{\frac{1}{4}}dx
\\
\nonumber &\leq& \sum_{m \in \Omega_n} C_l{\Theta_{k,m}}(r/d_n)^{-l + 1}
\leq C_l\left(2^{k+2}\sum_{0\le m \le 2^{k+2}}
\Theta^2_{k,m}\right)^{1/2}(2^{k\varepsilon'})^{-l + 1}
\\
\label{6.9jeden} &\leq& C'2^{k/2}2^{-k/2}2^{-k(s - 1/2)}\|F\|_{H^s}
\leq C'2^{-k\varepsilon}\|F\|_{H^s}.
\end{eqnarray}

\medskip

Secondly,  for $n \ge 2$ write  $t^{\Omega_n}_{k}(x) = \sum_{m \in
\Omega_n} {G_{k,m}}(\lambda^{-1}x)$. By \eqref{l6.5} and  by
Lemma~\ref{G1-decomp}
$$
\|t^{\Omega_n}_{k}\|^2_{L^2}\leq C \sum_{m \in
\Omega_n}\|{G_{k,m}}(x/\lambda)\|^2_{L^2} \leq Ca\sum_{m \in
\Omega_n} {\Theta_{k,m}}^2.
$$
Recall that $\supp {G_{k,m}} \subset [(m-1)2^{-k},(m+1)2^{-k}]$ so
if $m\in \Omega_n$ and $|2^kx/\lambda-2^ky/\lambda| \le
2^{2(n-1)}-1$ then $t^{\Omega_n}_{k}(x/\lambda){\check \Ai}(x- y)=0$.
Now if $n \ge 2$ and $|2^kx/\lambda-2^ky/\lambda| \ge 2^{2(n-1)}-1$
then $|x-y| \ge a(2^{2(n-1)}-1) \ge a2^{2n}/8$.

  Hence, for  $n\ge 2$,  by Lemma \ref{prop7.1} or by \eqref{e2.6}
\begin{eqnarray}\label{srodek}
\|T_{k}^{\Omega_n}\delta_y\|^2_{L^2} &=&
 \int_{  |x-y| \ge {a2^{2n-3}}} \left| t^{\Omega_n}_{k}(x/\lambda){\check \Ai}(x - y)\right|^2dx
\leq Ca^{-1/2}2^{-n}\|t^{\Omega_n}_{k}\|^2_{L^2} \nonumber\\ &\le&
 C{\sqrt{a}}{2^{-n}}\sum_{m \in \Omega_n} {\Theta_{k,m}}^2.
 \end{eqnarray}
 Estimate \eqref{srodek} holds also for $n=0$ and $n=1$.
Indeed note that
 \begin{eqnarray*}
\int|{G_{k,m}}(x/\lambda){\rm Ai}(y-x)|^2dx
&\leq& \int_{|x-ma|\leq a}|{G_{k,m}}(x/\lambda)|^2(1+|x-y|)^{-1/2}dx\\
&\leq& C\int_{|x-ma|\leq a}\Theta^2_{k,m}(1+|x-y|)^{-1/2}dx \leq
C\Theta^2_{k,m}a^{1/2}.
\end{eqnarray*}
However note that  $\#\Omega_n \le 9$ for $n=0$ and $n=1$ so estimate \eqref{srodek} is
also valid for these $n$.

Thirdly by \eqref{srodek}
\begin{eqnarray}
\nonumber \int_{|x - y| \leq r} |T_{k}^{\Omega_n}\delta_y|dx &\leq&
2r^{1/2}\|T_{k}^{\Omega_n}\delta_y\|_{L^2}\hspace{4cm}
\\
\nonumber &\leq& C\left(2^{k\varepsilon'}d_n\sqrt{a}2^{-n}\sum_{m
\in \Omega_n} {\Theta_{k,m}}^2\right)^{1/2} =
C\left(2^{k\varepsilon'}\sum_{m \in \Omega_n} {\Theta_{k,m}}^2\right)^{1/2}
 \\
\label{6.9dwa} &\leq& C'2^{k\varepsilon'/2}2^{-k(s -
1/2)}\|F\|_{H^s} \leq C'2^{-k\varepsilon}\|F\|_{H^s}.
\end{eqnarray}

Now Lemma~\ref{Tk-1/2} follows from estimates \eqref{6.9jeden} and \eqref{6.9dwa}.
\end{proof}

\begin{lemma}\label{Tk-2/3}
Let $T_k$ be operator defined by \eqref{TK} corresponding to
functions ${G}$ and $F$ described in Lemma~\ref{G1-decomp}. Assume
next that  $0 < \varepsilon < s - 1/2 - 1/6$. Then there exists $C$
such that
$$
\|T_k\delta_y\|_{L^1} \leq C2^{-\varepsilon k}\|F\|_{H^s}
$$
for  all $k$ such that $2^k > \lambda$.
\end{lemma}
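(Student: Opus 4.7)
The proof adapts the strategy of Lemma~\ref{Tk-1/2} to the small-$a$ regime $a=2^{-k}\lambda<1$, in which both cases of Lemma~\ref{gkm-est} appear. The upper bound $2^k\le\lambda^{3/2}/6$ from Lemma~\ref{G1-decomp} forces $a\ge c\cdot 2^{-k/3}$, and this is the source of the extra $\tfrac{1}{6}$ in the hypothesis on $s$.

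I would partition $\Delta_k$ by the threshold $|y-ma|=a^{-2}$ that separates the two cases of Lemma~\ref{gkm-est}:
\[
\Omega_0=\{m\in\Delta_k:|y-ma|\le a^{-2}\},\qquad \Omega_n=\{m\in\Delta_k:2^{n-1}a^{-2}<|y-ma|\le 2^n a^{-2}\},\; n\ge 1.
\]
Since $|y-ma|\le C\lambda$ for $y\in I_{2\lambda}$ and $m\in\Delta_k$, only $n\lesssim k$ values are nonempty. For each $n\ge 1$, I would mimic Lemma~\ref{Tk-1/2}: apply Lemma~\ref{gkm-est}(A) with $d_n=|y-ma|^{1/2}/a\sim 2^{n/2}a^{-2}$, split at $r_n=2^{k\varepsilon'}d_n$, and use the $L^2$ bound
\[
\|T_k^{\Omega_n}\delta_y\|_{L^2}^2\le Ca^2\, 2^{-n/2}\sum_{m\in\Omega_n}\Theta_{k,m}^2
\]
(from $|\Ai(\mu-y)|^2\le Ca\,2^{-n/2}$ on $\supp t_k^{\Omega_n}$) combined with H\"older over an interval of length $2r_n$ on the near region, and the pointwise decay of Lemma~\ref{gkm-est}(A) with Cauchy--Schwarz over $\#\Omega_n\le 2^{n+1}a^{-3}$ on the far region. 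The weight $(1+|y-ma|/(1+|x-ma|))^{1/4}$ is either $O(1)$ (when $|x-ma|\gtrsim|y-ma|$) or at worst $C\,2^{n/4}a^{-1/2}$, which is absorbed by choosing $l$ large in terms of $\varepsilon'$. Summing over $n\lesssim k$ yields a contribution $\le C\,2^{-\varepsilon k}\|F\|_{H^s}$.

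The key new contribution comes from $\Omega_0$, where Lemma~\ref{gkm-est}(B) applies. On the near region $|x-y|\le Ca^{-2}$, the disjoint spectral supports of $G_{k,m_1}$ and $G_{k,m_2}$ for $|m_1-m_2|\ge 2$ give orthogonality in $L^2$, whence
\[
\|T_k^{\Omega_0}\delta_y\|_{L^2}^2\le C\sum_{m\in\Omega_0}\|G_{k,m}(\A/\lambda)\delta_y\|_{L^2}^2\le Ca\cdot 2^{-k(2s-1)}\|F\|_{H^s}^2.
\]
H\"older over $|E|\sim a^{-2}$ then yields
\[
\|T_k^{\Omega_0}\delta_y\|_{L^1(|x-y|\le Ca^{-2})}\le Ca^{-1/2}\,2^{-k(s-1/2)}\|F\|_{H^s}\le C\,2^{-k(s-2/3)}\|F\|_{H^s},
\]
which exactly matches $2^{-\varepsilon k}\|F\|_{H^s}$ because $\varepsilon<s-\tfrac{1}{2}-\tfrac{1}{6}$. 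This is the step that makes the $\tfrac{1}{6}$ in the hypothesis sharp.

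The main obstacle I expect is the $\Omega_0$ tail $\|T_k^{\Omega_0}\delta_y\|_{L^1(|x-y|>Ca^{-2})}$. A direct application of Lemma~\ref{gkm-est}(B) followed by a weighted Cauchy--Schwarz over $m\in\Omega_0$ only produces $Ca^{-3/2}\,2^{-k(s-1/2)}\|F\|_{H^s}\le C\,2^{k(1-s)}\|F\|_{H^s}$, which needs $s>1$ and so is insufficient. To recover $2^{-\varepsilon k}$ one must exploit the oscillation in the integral representation $T_k^{\Omega_0}\delta_y(x)=\int P_0(s/\lambda)\Ai(x-s)\Ai(y-s)\,ds$ with $P_0=\sum_{m\in\Omega_0}G_{k,m}$; writing each Airy factor through the asymptotic in Lemma~\ref{prop7.1} produces four oscillatory integrals with phases $\pm\zeta(x-s)\pm\zeta(y-s)$ whose $s$-derivatives all have magnitude $\gtrsim|x-y|^{1/2}$ on $\supp P_0$, and one then carries out a careful integration-by-parts/stationary-phase analysis, balanced against the $a^{-1}$ derivative cost of $P_0$, to extract the additional decay in the tail regime $|x-y|\gg a^{-2}$.
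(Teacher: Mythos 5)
Your overall architecture coincides with the paper's: split off the set where $|y-ma|\le a^{-2}$ (the paper's $\Lambda_0$, your $\Omega_0$), treat the remaining annuli exactly as in Lemma~\ref{Tk-1/2} via part A) of Lemma~\ref{gkm-est}, and obtain the $\Omega_0$ near-region bound by H\"older from the $L^2$ estimate $\|T_k^{\Omega_0}\delta_y\|_{L^2}^2\le Ca\,2^{-k(2s-1)}\|F\|_{H^s}^2$; your identification of $a^{-1/2}=2^{k/2}\lambda^{-1/2}\le C2^{k/6}$ (from $2^k\le\lambda^{3/2}/6$) as the source of the extra $\tfrac16$ is exactly right. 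The genuine gap is the $\Omega_0$ tail, which you leave unproved and propose to handle by a stationary-phase analysis of $\int P_0(s/\lambda)\Ai(x-s)\Ai(y-s)\,ds$. That sketch would not go through as stated: for the difference phase $\zeta(y-s)-\zeta(x-s)$ one has $|\partial_s|=|(s-y)_+^{1/2}-(s-x)_+^{1/2}|\sim |x-y|\big((s-x)^{1/2}+(s-y)^{1/2}\big)^{-1}\sim|x-y|\lambda^{-1/2}$ on the support of $P_0$ (which lives at scale $s\sim\lambda$), and this is far smaller than your claimed lower bound $|x-y|^{1/2}$ whenever $|x-y|\ll\lambda$ — which does occur in the tail regime since $a^{-2}$ can be much smaller than $\lambda$.

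More importantly, the obstacle is illusory and no oscillation is needed. The paper closes the $\Lambda_0$ tail with the same elementary device you already use for $n\ge1$: cut at $r=2^{k\varepsilon'}a^{-2}$ rather than at $Ca^{-2}$. Then, with $\varepsilon'(l-1)>1$,
$\int_{|x-y|>r}a(1+a^{2}|x-y|)^{-l}\,dx\le C_l\,a^{-1}(a^{2}r)^{-l+1}=C_l\,a^{-1}2^{-k\varepsilon'(l-1)}\le C_l\,a^{-1}2^{-k}$,
and combining this with $\sum_{m\in\Omega_0}\Theta_{k,m}\le C\,2^{k/2}\big(\sum_m\Theta_{k,m}^2\big)^{1/2}\le C\,2^{k/2}2^{-k(s-1/2)}\|F\|_{H^s}$ and with $a^{-1}2^{-k/2}=\lambda^{-1}2^{k/2}\le C$ (again from $2^k\le\lambda^{3/2}/6$) gives the tail bound $C\,2^{-k(s-1/2)}\|F\|_{H^s}\le C\,2^{-k\varepsilon}\|F\|_{H^s}$. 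Enlarging the near region to $|x-y|\le 2^{k\varepsilon'}a^{-2}$ costs only a factor $2^{k\varepsilon'/2}$ in your H\"older step, which is absorbed by taking $\varepsilon'$ small relative to $s-\tfrac12-\tfrac16-\varepsilon$. With this correction your argument closes and is essentially the paper's proof.
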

\begin{proof}
This time  set
$$\Lambda_0 = \{m \in \Delta_k : |y - ma|^{-1/2} \geq a \},$$
and for $n\ge 1$ put
$$
\Lambda_n = \Omega_n \setminus \Lambda_0= \{m\in \Delta_k : \text{$|y - ma|^{-1/2} < a$ and
 $2^{n-1} < \left|\frac{y}{a} - m\right|^{1/2} \leq 2^n$ }\}$$
where $a = 2^{-k}\lambda$.
Then  for $n=0, 1, 2, \ldots$ we write
$$
T_{k}^{\Lambda_n} = \sum_{m \in \Lambda_n} {G_{k,m}}({\A}/\lambda).
$$

If $n \ge 1$ and $m\in \Lambda_n$ then clearly $a >|y - ma|^{-1/2} \ge \min(1, |y - ma|^{-1/2})$. Hence we can
use the same argument as in the proof of Lemma  \ref{Tk-1/2} to show that
\begin{eqnarray}\label{C.11Aprl_TkGn}
\|T_{k}^{\Lambda_n}\delta_y\|_{L^1} \leq C2^{-\varepsilon
k}\|F\|_{H^s}.
\end{eqnarray}
 Thus  it remains to handle the
operator $T_{k}^{\Lambda_0}$. First   note that $2^k> \lambda $ so
$1>a$. Now by definition, for $m \in \Lambda_0$ we have $|y -
ma|^{-1/2} \geq a$ so $a < \min(1,|y - ma|^{-1/2})$ . Consequently,
by Lemma \ref{gkm-est}
\begin{eqnarray*}
|{G_{k,m}}(\lambda^{-1}{\A})\delta_y| &\leq& C_l{\Theta_{k,m}}a(1 + |y -
ma|)^{-1/4}(1 + |x - ma|)^{-1/4}(1 + a^{2}|x - ma|)^{-l}
\\
&\leq& C'_l{\Theta_{k,m}}a(1 + a^{2}|x - y|)^{-l}.
\end{eqnarray*}
Secondly take $0<\varepsilon' \leq s - 1/2- \varepsilon$, $l \in \N
$ such that $\varepsilon' (l-1)>1$ and set
$r=2^{k\varepsilon'}a^{-2}$. Then  by Lemma \ref{G1-decomp}
\begin{eqnarray}
\int_{|x - y| > r} |T_{k}^{\Lambda_0}\delta_y(x)|dx \leq \sum_{m \in \Lambda_0}
C'_l{\Theta_{k,m}}a\int_{|x - y| > r} (1 + a^{2}|x -
y|)^{-l}dx \nonumber\\
\leq C\frac{(ra^{2})^{-l+1}}{a}\sum_{m \in \Lambda_0} {\Theta_{k,m}}
\leq C\frac{2^{-k(l-1)k\varepsilon'}}{a} \left(2^{k+2}\sum_{m\in \Delta_k}
{\Theta_{k,m}}^2\right)^{1/2} \nonumber\\
 \leq C'2^{k/2}\lambda^{-1}2^{-k(s -
 1/2)}\|F\|_{H^s}\label{C.11Aprl_bigrforTkG0}
 \\ \leq C'2^{-k\varepsilon}\|F\|_{H^s}\nonumber,
\end{eqnarray}
where we used the fact that $0\leq m\leq 2^{k+2}$, the inequality
$2^k \leq \lambda^{3/2}/6$ and the estimates
$$
2^{k/2}\lambda^{-1} \leq 2^{2k/3}\lambda^{-1} \leq
(\lambda^{3/2}/6)^{2/3}\lambda^{-1} = (1/6)^{2/3}.$$

Next, similarly as before  set $t_{k}^{\Lambda_0}(x) = \sum_{m \in \Lambda_0}
{G_{k,m}}(x/\lambda)$. Again by Lemma \ref{G1-decomp}
$$
\|t_{k}^{\Lambda_0}\|^2_{L^2} \leq Ca\sum_{m \in \Delta_k} {\Theta_{k,m}}^2.
$$
Hence
$$
\|T_{k}^{\Lambda_0}\delta_y\|^2_{L^2} = \int_{\R} \left|t_{k}^{\Lambda_0}(x){\Ai}(y-x)\right|^2dx
\leq Ca\sum_{m \in \Delta_k} {\Theta_{k,m}}^2
$$
Now by the Cauchy-Schwarz inequality
\begin{eqnarray}
\int_{|x - y| \leq r} |T_{k}^{\Lambda_0}\delta_y| \leq
2r^{1/2}\|T_{k}^{\Lambda_0}\delta_y\|_{L^2}
&\leq&
C\left(2^{k\varepsilon'}a^{-2}a\sum_{m \in \Delta_k} {\Theta_{k,m}}^2\right)^{1/2}\nonumber\\
\leq C'2^{k\varepsilon'/2}a^{-1/2} 2^{-k(s - 1/2)}\|F\|_{H^s}
&\leq& C'2^{k\varepsilon'/2}\lambda^{-1/2}2^{k/2}2^{-k(s - 1/2)}\|F\|_{H^s}\nonumber\\
&=& C'2^{k\varepsilon'/2}\lambda^{-1/2}2^{k/3}2^{-k(s - 1/2 - 1/6)}\|F\|_{H^s}\nonumber\\
&\leq& C'2^{k\varepsilon'/2}\lambda^{-1/2}(c\lambda^{3/2})^{1/3}
2^{-k(s - 1/2 - 1/6)}\|F\|_{H^s}\nonumber\\ &\leq&
C''2^{-k\varepsilon}\|F\|_{H^s}.\label{C.11Aprl_samllrforTkG0}
\end{eqnarray}
To get the last line we used the inequality $2^k \leq
\lambda^{3/2}/6$ and $\varepsilon<s-1/2-1/6$.

Now Lemma~\ref{Tk-2/3} follows from estimates \eqref{C.11Aprl_TkGn},
\eqref{C.11Aprl_bigrforTkG0} and \eqref{C.11Aprl_samllrforTkG0}.
\end{proof}

\begin{remark}
{\rm Note that in proofs of Lemmas \ref{Tk-1/2} and \ref{Tk-2/3} the assumption
$s> 3/2=1/2+1/6$ was crucial only in the last estimates of the proof of Lemma \ref{Tk-2/3}.
In the rest of the argument it is sufficient to require that $s>1/2$.  }
\end{remark}


\begin{lemma}\label{Lam3/4}
Let $T_k$ be operator defined by \eqref{TK} corresponding to
functions ${G}$ and $F$ described in Lemma~\ref{G1-decomp}. Suppose
also that  $0 < \varepsilon < s - 1/2$ and  $2^k > \lambda$.  Then
$$
\|T_k\|_{L^{4/3}(I_{2\lambda})\rightarrow L^{4/3}(I_{2\lambda})} \leq Ck2^{-k\varepsilon}\|F\|_{H^s}
$$
\end{lemma}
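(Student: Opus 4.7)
The plan is to decompose
\[
T_k\delta_y = T_k^{\Lambda_0}\delta_y + \sum_{n\ge 1}T_k^{\Lambda_n}\delta_y
\]
along exactly the same splitting of $\Delta_k$ as in the proof of Lemma~\ref{Tk-2/3}, and then bound each piece in $L^{4/3}(I_{2\lambda})\to L^{4/3}(I_{2\lambda})$ by $C 2^{-k\varepsilon}\|F\|_{H^s}$. Since $\Lambda_n$ is empty once $n\gtrsim k$, at most $O(k)$ pieces contribute; summing the bounds produces the factor $k$.

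For each piece with $n\ge 1$ the proof of Lemma~\ref{Tk-1/2} carries over verbatim, because the loss of $1/6$ that enters Lemma~\ref{Tk-2/3} comes exclusively from the $\Lambda_0$ contribution. This gives the sharp $L^1\to L^1$ bound $\|T_k^{\Lambda_n}\|_{L^1\to L^1}\le C 2^{-k\varepsilon}\|F\|_{H^s}$ for any $\varepsilon<s-1/2$. On the other hand, since the spectral supports $[(m-1)2^{-k},(m+1)2^{-k}]$ of the $G_{k,m}(\A/\lambda)$ overlap at most two-fold, the now-familiar near-orthogonality argument (together with $\sum_m\Theta_{k,m}^2\le C 2^{-k(2s-1)}\|F\|_{H^s}^2$ from Lemma~\ref{G1-decomp}) gives $\|T_k^{\Lambda_n}\|_{L^2\to L^2}\le C 2^{-k(s-1/2)}\|F\|_{H^s}$. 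Riesz--Thorin interpolation with $\theta=1/2$ then yields the desired $L^{4/3}\to L^{4/3}$ estimate for each $T_k^{\Lambda_n}$, $n\ge 1$.

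The delicate case is $T_k^{\Lambda_0}$. Here I would use that, for each $y$, the kernel column $K_{T_k^{\Lambda_0}}(\cdot,y)$ is essentially supported in an interval of length $\sim a^{-2}$ around $y$ and, by the orthogonality estimate already appearing in the proof of Lemma~\ref{Tk-2/3}, satisfies $\|K_{T_k^{\Lambda_0}}(\cdot,y)\|_{L^2}\le C\sqrt{a}\cdot 2^{-k(s-1/2)}\|F\|_{H^s}$. H\"older's inequality on the support gives
\[
\sup_y\|K_{T_k^{\Lambda_0}}(\cdot,y)\|_{L^{4/3}}\le (a^{-2})^{1/4}\cdot\sqrt{a}\cdot 2^{-k(s-1/2)}\|F\|_{H^s}=C 2^{-k(s-1/2)}\|F\|_{H^s},
\]
which is equivalent to $\|T_k^{\Lambda_0}\|_{L^1\to L^{4/3}}\le C 2^{-k(s-1/2)}\|F\|_{H^s}$. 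Since each $G_{k,m}(\A/\lambda)$ is self-adjoint, the underlying kernels $K_{G_{k,m}}(x,y)$ are symmetric, and the same argument in the dual variable produces the matching bound $\sup_x\|K_{T_k^{\Lambda_0}}(x,\cdot)\|_{L^{4/3}}\le C 2^{-k(s-1/2)}\|F\|_{H^s}$. Combining these two $L^{4/3}$ kernel-norm bounds with the $L^2\to L^2$ bound through a three-endpoint (Stein-type) interpolation of the bilinear form $(f,g)\mapsto\langle T_k^{\Lambda_0}f,g\rangle$, and restricting to the bounded domain $I_{2\lambda}$, produces the required $L^{4/3}\to L^{4/3}$ estimate with exponent $\varepsilon<s-1/2$.

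The hard part is exactly this last interpolation. A naive Riesz--Thorin interpolation between the standard $L^1\to L^1$ bound (as in Lemma~\ref{Tk-2/3}) and the $L^2\to L^2$ bound only gives $\varepsilon<s-7/12$, losing $1/12$ because $K_{T_k^{\Lambda_0}}$ is concentrated on the diagonal scale $a^{-2}$. Working directly at the level of $L^{4/3}$ kernel norms as above, together with careful bookkeeping of the factors of $a$, $\lambda$ and $2^k$ using both $2^k>\lambda$ and $2^k\le\lambda^{3/2}/6$ from Lemma~\ref{G1-decomp}, should close this gap and deliver the bound stated in the lemma.
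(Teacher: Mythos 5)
Your handling of the pieces $T_k^{\Lambda_n}$, $n\ge 1$, is sound: there the $L^1\to L^1$ bound $C2^{-k\varepsilon}$ (valid for any $\varepsilon<s-1/2$, since the $1/6$ loss in Lemma~\ref{Tk-2/3} indeed comes only from $\Lambda_0$) interpolates with the $L^2\to L^2$ bound to give the desired $L^{4/3}$ estimate, and the $O(k)$ count of nonempty $\Lambda_n$ produces the factor $k$. The genuine gap is in your treatment of $T_k^{\Lambda_0}$. The three endpoints you propose to interpolate are, in bilinear form, $|\langle Tf,g\rangle|\le A\|f\|_1\|g\|_4$, $\le B\|f\|_2\|g\|_2$ and $\le A\|f\|_4\|g\|_1$, i.e.\ the points $(1,\tfrac14)$, $(\tfrac12,\tfrac12)$, $(\tfrac14,1)$ in the $(1/p,1/r)$ square; the target $L^{4/3}\to L^{4/3}$ corresponds to $(\tfrac34,\tfrac14)$, which lies \emph{outside} the convex hull of these three points (any convex combination matching the second coordinate $\tfrac14$ must put all weight on $(1,\tfrac14)$). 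So no Stein-type multilinear interpolation of these endpoints reaches $L^{4/3}\to L^{4/3}$; what they do give is $L^{4/3}\to L^{8/5}$ (or $L^{4/3}\to L^2$ via the generalized Schur test with $L^{4/3}$ row and column norms), and converting the output back to $L^{4/3}$ costs a power of the measure of the support. Even exploiting that the kernel of $T_k^{\Lambda_0}$ is concentrated where $|x-y|\lesssim a^{-2}$, this H\"older step loses at least $(a^{-2})^{1/8}=a^{-1/4}\le C2^{k/12}$ (using $2^k\le\lambda^{3/2}/6$), so your scheme lands exactly at $\varepsilon<s-7/12$ --- the same threshold as the ``naive'' interpolation you set out to beat. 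The bookkeeping of $a,\lambda,2^k$ cannot close this $1/12$; the obstruction is structural.

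The missing idea is a \emph{second} dyadic decomposition, in the distance between the spatial variable and the spectral parameter of the Airy transform: writing $\varphi_{j,u}(x)=\eta(2^{-j}(x-u))\Ai(x-u)$ and $T_{k,j}f=\Ai*(t_k\langle f,\varphi_{j,\cdot}\rangle)$, one gets the opposing bounds $\|T_{k,j}^{\Lambda_0}\|_{L^1\to L^1}\lesssim 2^{-j/4}a^{-1/2}2^{-k\varepsilon}\|F\|_{H^s}$ (from $|\varphi_{j,u}|\le C2^{-j/4}$) and $\|T_{k,j}\|_{L^2\to L^2}\lesssim (2^{j/2}a)^{1/2}2^{-k(s-1/2)}\|F\|_{H^s}$ (from $\|\varphi_{j,u}\|_{L^2}^2\le C2^{j/2}$). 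At the $L^{4/3}$ midpoint the factors $2^{\mp j/4}a^{\mp 1/2}$ cancel \emph{exactly}, so each $T_{k,j}$ is bounded by $C2^{-k\varepsilon}$ with $\varepsilon<s-1/2$, and only the number $O(k)$ of relevant $j$'s enters, giving $Ck2^{-k\varepsilon}$. Without some device of this kind that trades regularity in $j$ between the $L^1$ and $L^2$ estimates, the $\Lambda_0$ contribution cannot be brought below the $s-7/12$ threshold, and the lemma as stated is not proved.
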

\begin{proof}
Put $t_k(x) = \sum_{m \in \Delta_k} {G_{k,m}}(x/\lambda)$.  We have
$$
T_kf = \int_{\R} t_k(u)\varphi_u\langle f, \varphi_u\rangle
$$
where $\varphi_u(x) = \Ai(x - u)$.

Let $\eta \in C^\infty_c({\mathbb R})$ be  a such function that $0 \leq \eta
\leq 1$, $\supp(\eta) \subset [-8,-2]\cup [2,8]$ and  $\sum_j\eta(2^j x)
= 1$ for all $x \ne 0$.  For $j>0$ put
$$\varphi_{j, u}(x) =
\eta(2^{-j}(x - u))\varphi_u(x) \quad \mbox{ and}  \quad \varphi_{0,u} = \varphi_u -
\sum_{j>0}\varphi_{j,u}.$$ Next  set
$$
T_{k,j}f =  \int t_k(u)\varphi_u\langle f, \varphi_{j,u}\rangle du =
\Ai*(t_k\langle f, \varphi_{j,\cdot}\rangle).
$$
First note that  by \eqref{e2.6}
$$
\|\varphi_{j,u}\|^2_{L^2} \leq C2^{j/2}.
$$
Secondly,  by \eqref{l6.5} and  by Lemma~\ref{G1-decomp}
$$
\int |t_k(u)|^2du \leq a2^{-k(2s - 1)}\|F\|^2_{H^s}
$$
where $a=2^{-k}\lambda$. Hence
\begin{eqnarray*}
\|T_{k,j}f\|^2_{L^2}& =& \int |t_k(u)|^2|\langle f, \varphi_{j,u}\rangle|^2du\\
&\leq& C2^{j/2}\|f\|^2_{L^2}\int |t_k(u)|^2du\\
&\leq & Ca2^{j/2-k(2s - 1)}\|f\|^2_{L^2}\|F\|^2_{H^s}.
\end{eqnarray*}
Thus
\begin{equation}\label{l2one}
\|T_{k,j}\|_{L^2\rightarrow L^2} \le C(2^{j/2}a)^{1/2} 2^{-k(s -
1/2)}\|F\|_{H^s}.
\end{equation}
On the other hand
$$
\|T_k\|_{L^2\rightarrow L^2} \leq \|t_k\|_{L^\infty}\le C \max_{m\in \Delta_k} {\Theta_{k,m}}
\leq C2^{-k\varepsilon}\|F\|_{H^s}
$$
so
\begin{eqnarray}
\|\sum_{2^j > a^{-2}} T_{k,j}\|_{L^2\rightarrow L^2} &\leq&
\|T_k\|_{L^2\rightarrow L^2} + \sum_{2^j \leq
a^{-2}}\|T_{k,j}f\|_{L^2} \nonumber
\\ \label{l2}
&\leq& C(1 + \sum_{2^j \leq
a^{-2}}(2^{j/2}a)^{1/2})2^{-k\varepsilon}\|F\|_{H^s}
\\
&\leq& C'2^{-k\varepsilon}\|F\|_{H^s}. \nonumber
\end{eqnarray}

It follows from the definition of $T_{k,j}$ that
\begin{eqnarray}
T_{k,j}\delta_y =  \int t_k(u)\varphi_u\varphi_u(y)\eta(2^{-j}(y -
u)) du = t^{y}_{k, j}({\A})\delta_y
\end{eqnarray}
where $t^{y}_{k, j}(u) = t_k(u)\eta(2^{-j}(y - u))$. Thirdly note
that $2^{-j} \lambda \le 2^{k}$ so if we set
$$
t^{y}_{k, j}(\lambda u) = \sum_{m \in \Delta_k}\eta(2^{-j}(y -
\lambda u)){G_{k,m}}(u) = \sum_{m \in \Delta_k}G^y_{k,m,j}(u),
$$
then  functions $G^y_{k,m,j}(u) = \eta(2^{-j}(y - \lambda
u)){G_{k,m}}(u)$ satisfy estimates \eqref{gkm1}, \eqref{gkm2} and
inclusion \eqref{l6.5} from Lemma \ref{G1-decomp} uniformly for $j$
and $y$. In addition
$$
T_{k,j}\delta_y =\sum_{m \in
\Delta_k}G^y_{k,m,j}({\A}/\lambda)\delta_y.
$$

 We going to consider two cases: $2^j > a^{-2}$ and $2^j \le  a^{-2}$.
Note that if $2^j > a^{-2}$ and $G^y_{k,m,j}(u)\neq 0$, then
$$
|y-ma| \ge |y-\lambda u|-  |\lambda u -ma|  > 2^{j+1}-a>a^{-2}.
$$
Hence we can  repeat  the argument similar to the proofs of Lemmas
\ref{Tk-1/2} and \ref{Tk-2/3} assuming that $m\notin \Lambda_0$.
It follows that
\begin{equation}\label{kn}
\|T_{k,j}\delta_y\|_{L^1} \leq C2^{-k\varepsilon}\|F\|_{H^s}.
\end{equation}
(Note that additional $1/6$ was necessary only to consider the case
$m\in \Lambda_0$.)
Next we notice  that $2^j \le |y-u|$ unless $\eta(2^{-j}(y - u))=0$. Then $|u|
\le 4 \lambda $ unless $t_k(u)=0$. Hence $2^n \le |y-u| \le |y| +|u|
\le 2 \lambda +4\lambda =6 \lambda \le 62^k\le 2^{k+3}$ unless
$t^{y}_{k, j}(u)=0$. Therefore we can assume that $0\le j \le 4k$. Thus
there are at most $4k$ nonzero terms in the following  sum  so by \eqref{kn}
$$
\|\sum_{2^j>a^{-2}}T_{k,j}\|_{L^1\rightarrow L^1} \leq
Ck2^{-k\varepsilon}\|F\|_{H^s}.
$$
Interpolating with estimate \eqref{l2} yields the required estimates
\begin{eqnarray}\label{C.11Aprl_bigjforTkj}
\|\sum_{2^j>a^{-2}}T_{k,j}\|_{L^{4/3}\rightarrow L^{4/3}} \leq
Ck2^{-k\varepsilon}\|F\|_{H^s}.
\end{eqnarray}

It remains to handle the case $2^j < a^{-2}$.
Again we decompose  the operator $T_{k,j}$, this time into two parts
$$
T_{k,j}^{\Lambda_0} = \sum_{m \in \Lambda_0}
G^y_{k,m,j}(\lambda^{-1}{\A}) \quad \mbox{and} \quad
T_{k,j}^{\Lambda_0^c}=T_{k,j}-T_{k,j}^{\Lambda_0} = \sum_{m \notin
\Lambda_0} G^y_{k,m,j}(\lambda^{-1}{\A})
$$
Recall that functions $G^y_{k,m,j}$ satisfies the same condition as $G_{k,m}$ so
if $m \in \Lambda_0$ then by
 Lemma \ref{gkm-est}
$$
|G^y_{k,m,j}(\lambda^{-1}{\A})\delta_y|(x) \leq C_l{\Theta_{k,m}}a(1 +
a^2|x - y|)^{-l}.
$$
With $r=2^{k\varepsilon'}a^{-2}$ where $0<\varepsilon' \leq s - 1/2
- \varepsilon$ and $l> 1 + 1/\varepsilon'$, like in the proof of
 Lemma~\ref{Tk-2/3} we have
\begin{eqnarray}\label{C.11Aprl_bigrforTkjG0}
\int_{|x - y| > r} |T_{k,j}^{\Lambda_0}\delta_y| \leq
C'2^{-k\varepsilon}\|F\|_{H^s}.
\end{eqnarray}
We have
$$
|\varphi_{j,u}|(x) \leq C2^{-j/4}.
$$
Consequently, if $2^j \leq a^{-2}$,
\begin{eqnarray*}
\|T_{k,j}^{\Lambda_0}\delta_y\|_{L^2}^2 &=&
 \int \left|\sum_{m \in \Lambda_0} G_{k,m}(u/\lambda)\right|^2\left|\varphi_{j,u}(y)\right|^2du
\\
&\leq& C2^{-j/2}\int \left|\sum_{m \in \Lambda_0}
G_{k,m}(u/\lambda)\right|^2du \leq C'2^{-j/2}a 2^{-k(2s -
1)}\|F\|^2_{H^s}
\end{eqnarray*}
and
\begin{eqnarray*}
\int_{|x - y| \leq r} |T^{\Lambda_0}_{k,j}\delta_y| &\leq&
(2r)^{1/2}\|T^{\Lambda_0}_{k,j}\delta_y\|_{L^2} \leq
C2^{k\varepsilon'/2}a^{-1}2^{-j/4}a^{1/2} 2^{-k(s - 1/2)}\|F\|_{H^s}
\\
&\leq& C2^{-j/4}a^{-1/2}2^{-k\varepsilon}\|F\|_{H^s}
\end{eqnarray*}
which, combing with estimate \eqref{C.11Aprl_bigrforTkjG0}, implies
that
\begin{equation}\label{zero}
\|T_{k,j}^{\Lambda_0}\|_{L^1\rightarrow L^1} \leq
C2^{-j/4}a^{-1/2}2^{-k\varepsilon}\|F\|_{H^s}.
\end{equation}
Remembering  that $m \notin \Lambda_0$ and repeating the argument of Lemmas \ref{Tk-1/2} and \ref{Tk-2/3} yield
\begin{equation}\label{nonzero}
\|T_{k,j}^{\Lambda_0^c}\|_{L^1\rightarrow L^1} \leq
C2^{-k\varepsilon}\|F\|_{H^s}.
\end{equation}
However $2^j \le a^{-2}$ so combining estimates \eqref{zero} and \eqref{nonzero}
shows that
$$
\|T_{k,j}\|_{L^1\rightarrow L^1} \leq
C2^{-j/4}a^{-1/2}2^{-k\varepsilon}\|F\|_{H^s}.
$$
Now interpolating with estimate \eqref{l2one} gives
\begin{eqnarray}\label{C.11Aprl_smalljforTkj}
\|T_{k,j}\|_{L^{4/3} \rightarrow L^{4/3}} \leq
C2^{-k\varepsilon}\|F\|_{H^s}.
\end{eqnarray}
Now Lemma~\ref{Lam3/4} follows from estimates
\eqref{C.11Aprl_bigjforTkj} and \eqref{C.11Aprl_smalljforTkj}.
\end{proof}
\bigskip

The proof of Lemma~\ref{Lam3/4} concludes also the proof of Theorem~\ref{main}.

\section{Necessary conditions for Bochner-Riesz summability and proof of Theorem~\ref{Riesz}}\label{sec3}
\setcounter{equation}{0}
This section is devoted to the discussion of a necessary condition of the
boundedness of Bochner-Riesz means of anharmonic oscillator  $\LL=-\frac{d^2}{dx^2}+|x|$.

\medskip

\begin{theorem}\label{verlp}
 Suppose that  $\LL$ is an anharmonic oscillator  defined by formula \eqref{op} and that
 the  Bochner-Riesz means of order $\alpha$ are uniformly
bounded on $L^p$, that is
$$
\sup_{R>0}\|\sigma_R^{\alpha}(\LL)\|_{p \to p} \le C < \infty.
$$
Then it necessarily follows that
$$
\alpha \ge  \max\left\{0,\frac{2}{3}\left|\frac{1}{2}-\frac{1}{p}\right|-\frac{1}{6}\right\}.
$$
In addition for $p=4$ and $p=4/3$ the necessary condition is $\alpha >0$.
\end{theorem}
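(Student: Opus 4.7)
By self-adjointness $\|\sigma_R^\alpha(\LL)\|_{p\to p}=\|\sigma_R^\alpha(\LL)\|_{p'\to p'}$, so it suffices to argue for $p \geq 2$, in which case the claim reduces to $\alpha \geq \max\{0, \frac{1}{6} - \frac{2}{3p}\}$, with strict inequality at $p=4$. The bound $\alpha \geq 0$ is immediate from testing on a normalized eigenfunction $\phi_n$: we have $\sigma_R^\alpha(\LL)\phi_n = (1-\lambda_n/R)_+^\alpha \phi_n$, and choosing $R=\lambda_n(1+\epsilon)$ with $\epsilon\to 0^+$ makes the scalar $(\epsilon/(1+\epsilon))^\alpha$ blow up when $\alpha < 0$, contradicting uniform $L^p$-boundedness.

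For the main inequality, fix $p > 4$ and let $n\to\infty$. The rank-one orthogonal projection $P_n = \phi_n\otimes \phi_n$ has $\|P_n\|_{L^p\to L^p} = \|\phi_n\|_p\|\phi_n\|_{p'}$, which by Lemma~\ref{le2.3} equals a constant times $\lambda_n^{1/4-1/p}$ for $p > 4$, respectively $(\log \lambda_n)^{1/4}$ at $p = 4$, both diverging in $n$. The plan is to link this divergence to the Bochner--Riesz constant $M := \sup_R \|\sigma_R^\alpha(\LL)\|_{p \to p}$ via an integral representation of $P_n$.

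Pick $F_n \in C_c^\infty(\R)$ a smooth bump of width $\sim \lambda_n^{-1/2}$ centered at $\lambda_n$, with $F_n(\lambda_n)=1$ and support disjoint from every other eigenvalue (possible by \eqref{e2.4}), so $F_n(\LL) = P_n$. Writing $(1-\lambda/R)^\alpha = R^{-\alpha}(R-\lambda)^\alpha$ turns the Bochner--Riesz transform into a right-sided Riemann--Liouville fractional integral of order $\alpha+1$, which inverts to give
$$
w_n(R) := \frac{R^\alpha}{\Gamma(\alpha+1)}\, D_+^{\alpha+1} F_n(R), \qquad F_n(\lambda) = \int_0^\infty w_n(R)\,\sigma_R^\alpha(\lambda)\,dR \ \text{for all }\lambda \geq 0.
$$
A scaling argument partitioning $R$ into the dyadic regions on and off the support of $F_n$ shows $\|w_n\|_{L^1} \leq C\lambda_n^{3\alpha/2}$: inside the support, $|D_+^{\alpha+1}F_n| \sim \delta^{-\alpha-1}$ on width $\delta = \lambda_n^{-1/2}$ contributes $\lambda_n^\alpha \delta^{-\alpha} = \lambda_n^{3\alpha/2}$; outside, the power-law tail $D_+^{\alpha+1} F_n(R) \sim \delta (\lambda_n - R)^{-\alpha-2}$ weighted by $R^\alpha \sim \lambda_n^\alpha$ contributes at the same order. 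Minkowski's inequality then gives
$$
\|P_n\|_{L^p\to L^p} = \|F_n(\LL)\|_{L^p\to L^p} \leq \|w_n\|_{L^1}\,M \leq C\lambda_n^{3\alpha/2}\,M,
$$
and comparing with $\|P_n\|_{L^p\to L^p} \geq c\lambda_n^{1/4-1/p}$ forces $M \geq c\lambda_n^{1/4-1/p-3\alpha/2}$; finiteness of $M$ as $n \to \infty$ forces $\alpha \geq \frac{2}{3}\bigl(\frac{1}{4}-\frac{1}{p}\bigr) = \frac{1}{6} - \frac{2}{3p}$. At $p = 4$ the analogous bound $M \geq c(\log\lambda_n)^{1/4}\lambda_n^{-3\alpha/2}$ forces $\alpha > 0$ strictly; the dual endpoint $p = 4/3$ follows by self-adjointness.

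The main obstacle is the precise scaling analysis of $w_n$, in particular the bound on its tail: although $F_n$ has compact support, the right fractional derivative $D_+^{\alpha+1} F_n$ does not (for non-integer $\alpha$), and one must verify that the non-local contribution, weighted by $R^\alpha$, does not exceed the local contribution. This requires the dyadic estimate $D_+^{\alpha+1} F_n(R) \sim \delta(\lambda_n - R)^{-\alpha-2}$ in the regime $R < \lambda_n - \delta$ and its integration against $R^\alpha$ over $R \in (0, \lambda_n - \delta)$, and is where the specific scale $\delta \sim \lambda_n^{-1/2}$ coming from the eigenvalue spacing~\eqref{e2.4} plays its decisive role.
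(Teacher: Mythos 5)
Your proposal is correct and follows essentially the same route as the paper: the same subordination identity expressing $F_n(\LL)$ as an average of Bochner--Riesz means against the Weyl fractional derivative $F_n^{(\alpha+1)}(s)s^\alpha$ (the paper's estimate \eqref{e4.1}), the same test multipliers — bumps of width $\sim\lambda_n^{-1/2}$ isolating a single eigenvalue, legitimate by \eqref{e2.4} — the same bound $C\lambda_n^{3\alpha/2}$ on the weighted $L^1$ norm of that derivative (the paper's \eqref{ee1}), and the same comparison with $\|\phi_n\|_p\|\phi_n\|_{p'}$ from Lemma~\ref{le2.3}; the only cosmetic differences are that you work on the side $p\ge 4$ rather than $p\le 4/3$ and that you explicitly account for the one-sided power-law tail of the fractional derivative, a point the paper's displayed computation passes over silently.
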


\begin{proof}
We start the proof by introducing the distributions $\chi_-^a$, defined by
$$
\chi_-^a=\frac{x_-^a}{\Gamma(a+1)},
$$
where $\Gamma$ is the Gamma function and
$$
x_-^a=|x|^a \quad \mbox{if} \quad x \le 0 \quad \quad \mbox{and}\quad  x_-^a=0 \quad  \mbox{if} \quad x > 0.
$$
Then $x_-^a$ are clearly distributions for $\Re a > -1$ and we have
for $\Re a > 0$,
\begin{equation*}
\frac{d}{dx} x_-^a = -a x_-^{a-1} \implies \frac{d}{dx} \chi_-^a=-\chi_-^{a-1}
\end{equation*}
which we use to extend the family of functions $ \chi_-^a$ to a family of distributions on $\rr$ defined for all $a\in \cc$, see \cite{Ho1} for details.  Since $1-\chi_-^0(x) $ is the Heaviside function, it follows that
\begin{equation}
\chi_-^{-k} =(-1)^k\delta_0^{(k-1)}, \quad  k=1,2,\ldots ,
\label{Heaviside}
\end{equation}
where $\delta_0$ is  the $\delta$-Dirac measure.
Motivated by the above equality    we define distribution $\delta_-^{\nu}$ for  all real exponents $\nu \in \R$ by
the formula
$$
\delta_-^{\nu}=\chi_-^{-\nu-1}.
$$
A straightforward computation shows that for all $w,z\in \cc$
\begin{equation*}\label{aa2}
\chi_-^w * \chi_-^z=\chi_-^{w+z+1}
\end{equation*}
where $\chi_-^w* \chi_-^z$ is the convolution of the distributions $\chi_-^w$ and $\chi_-^z$,
see \cite[(3.4.10)]{Ho1}. It follows from the above relation that
$$
\delta_-^{\nu}*\delta_-^{\mu}=\delta_-^{\nu+\mu}.
$$
Now if $\supp F \subset [0,\infty)$ we define the Weyl fractional
derivative of $F$ of order $\nu$ by the formula
$$
F^{(\nu)}=F*\delta_-^{\nu}
$$
and we note that
$$
F^{(\nu)}*\delta_-^{-\nu}=F*\delta_-^{\nu}*\delta_-^{-\nu}=F,
$$
see \cite[Page 308]{GP} or  \cite[(6.5)]{DOS}.
Thus
$$
F(\LL)=\frac{1}{\Gamma(\nu)}\int_0^\infty F^{(\nu)}(s)
(s-{\LL})_+^{\nu-1}ds= \frac{1}{\Gamma(\nu)}\int_0^\infty
F^{(\nu)}(s)s^{\nu-1}\sigma_s^{\nu-1}(\LL)ds,\ \ \forall{\nu>0}
$$
for all $F$ supported in the positive half-line. 
 Hence  if $\supp F \subset [0,\infty)$ then
\begin{eqnarray}
\|F(\LL)\|_{L^p\to L^p}\leq  C\sup_{R>0}\|\sigma_R^{\nu}(\LL)\|_{L^p\to
L^p} \int_0^\infty \left|F^{(\nu+1)}(s)\right|s^{\nu}ds.
 \label{e4.1}
\end{eqnarray}

\medskip
Consider now function $\eta \in C_c^\infty(\R)$  such that $\eta(0)=1$ and  $\supp
\eta \subset [-\frac{\pi}{2},\frac{\pi}{2}]$ and let $\{\lambda_n\}$
be the  set of  eigenvalues of the operator $\LL$. We define sequence of functions
$F_n$ by the formula
$$
F_n(\lambda)=\eta(\sqrt{\lambda_{n+1}}(\lambda-\lambda_n)).
$$
It follows from \eqref{e2.4} that  $F_n(\lambda_m) =1$, if $n=m$ and
$F_n(\lambda_m) =0$ otherwise.
Hence
$$
F_n(\LL)f=\sum_{m=1}^{\infty}F_n(\lambda_m)<{\phi}_n,f>{\phi}_n=<{\phi}_n,f>{\phi}_n.
$$
Thus by Lemma~\ref{le2.3} and estimates \eqref{eq2.31}  for all $p<2$
$$
\|F_n(\LL)\|_{p\to p} =\|{\phi}_n\|_{p}\|{\phi}_n\|_{{p'}} \ge
c\lambda_n^{-1/4}\lambda_n^{1/p-1/2} $$ where $p'$ is conjugate
exponent of $p$, that is $1/p+1/p'=1$. In addition for $p=4/3$.
$$
\|F_n(\LL)\|_{4/3\to 4/3} =\|{\phi}_n\|_{4/3}\|{\phi}_n\|_{{4}} \ge
c( \ln \lambda_n)^{1/4}.
$$

Next note that $\delta_-^{\nu}$
is a homogenous distribution, see \cite[Definition 3.2.2]{Ho1} so
$F_n^{(\nu+1)}(\lambda)= \lambda_{n+1}^{{(\nu+1)}/{2}}\eta^{(\nu+1)}(\sqrt{\lambda_{n+1}}(\lambda-\lambda_n)) $. Hence setting $a=\lambda_n-\pi\lambda_{n+1}^{-1/2}/2$
and  $b=\lambda_n+\pi\lambda_{n+1}^{-1/2}/2$ one gets
\begin{eqnarray} \nonumber
\int_0^\infty \left|F_n^{(\nu+1)}(\lambda)\right|\lambda^{\nu}d\lambda=\lambda_{n+1}^{{(\nu+1)}/{2}}
\int_{a}^{b} \left|\eta^{(\nu+1)}(\sqrt{\lambda_{n+1}}(\lambda-\lambda_n))\right|\lambda^{\nu}d\lambda
\\ \le C\lambda_{n+1}^{{(\nu+1)}/{2}} \lambda_{n+1}^{\nu}(b-a)
\le C \lambda_{n}^{3\nu/2}. \label{ee1}
\end{eqnarray}

Now suppose that $\sup_{R>0}\|\sigma_R^{\alpha}(\LL)\|_{p\to p} < \infty $.
Substituting $\alpha=\nu$ in \eqref{e4.1} and using estimate \eqref{ee1} show that if $1\le p <2$  then
$$c\lambda_n^{-1/4}\lambda_n^{1/p-1/2}\leq C \lambda_{n}^{3\alpha/2}.$$
The above estimates  can hold  for large $n$ only if $\alpha \ge
-\frac{1}{2}+\frac{2}{3p}$ or $p\le \frac{4}{6\alpha +3}$. A similar argument shows
that for  $p=4/3$ the necessary condition is $\alpha >0$. We extend this necessary condition to all $1\le p \le \infty$ by duality. This ends the proof of Theorem~\ref{verlp}.
\end{proof}

\begin{remark}
{\rm The method used here can be used to get the necessary condition
of the boundedness of Bochner-Riesz means for harmonic oscillator
which was proved in \cite{T} in another way. More precisely if
${\mathcal H}=-\frac{d^2}{dx^2}+x^2$ and
$$
\sup_{R>0}\|\sigma_R^{\alpha}({\mathcal H})\|_{p \to p} \le C < \infty.
$$
Then it necessarily follows that
$
\alpha \ge  \max\left\{0,\frac{2}{3}\left|\frac{1}{2}-\frac{1}{p}\right|-\frac{1}{6}\right\}.
$}
\end{remark}
\begin{proof}
This time consider function  $\eta \in C_c^\infty(\R)$  such that $\eta(0)=1$ and  $\supp
\eta \subset [-1,1]$ and set $F_n(\lambda)=\eta(\lambda-2n-1)$.
Similarly as before
$$F_n({\mathcal H})f=\sum_{m=1}^{\infty}F_n(\lambda_m)<h_n,f>h_n=<h_n,f>h_n$$ where this time $h_n$ is $n$-th Hermit function. Clearly
$
\int_0^\infty \left|F_n^{(\nu+1)}(\lambda)\right|\lambda^{\nu}d\lambda \sim n^{\nu}$.
On the other hand it follows from the standard asymptotic for the Hermit functions
that $\|h_n\|_{p}\|h_n\|_{{p'}} \ge Cn^{-\frac{1}{2}+\frac{2}{3p}}$, see for example
\cite[Lemma 2.1]{T}.  Next if  $\sup_{R>0}\|\sigma_R^{\alpha}({\mathcal H})\|_{p\to p} < \infty $ then
$$
n^{-\frac{1}{2}+\frac{2}{3p}} \le C n^\alpha.
$$
This yields the required necessary condition.
\end{proof}

For a sake of completeness we end this section with a discussion of
the proof of Theorem~\ref{Riesz}, which at this point is an immediate consequence of Theorem~\ref{main} and \ref{verlp}.

\begin{proof}[Proof of Theorem  \ref{Riesz}]
We proved in Theorem~\ref{verlp}
that if $\LL=-\frac{d^2}{dx^2}+|x|$ and
$$
\sup_{R>0}\|\sigma_R^{\alpha}({\LL})\|_{p \to p}= \sup_{t>0}\|\sigma_1^{\alpha}(t{\LL})\|_{p \to p}\le C < \infty.
$$
Then it necessarily follows that
$
\alpha \ge  \max\left\{0,\frac{2}{3}\left|\frac{1}{2}-\frac{1}{p}\right|-\frac{1}{6}\right\}
$. Hence it remains to prove that if $
\alpha >  \max\left\{0,\frac{2}{3}\left|\frac{1}{2}-\frac{1}{p}\right|-\frac{1}{6}\right\}
$ then indeed the above estimate for the Riesz means holds.
To show it take a function $\psi\in C_c^\infty(-3/4,3/4)$ such that $\psi=1$ on
$[-1/2,1/2]$ so that
$$
\sigma_1^{\alpha}(\lambda)=(1-{\lambda^2})_+^{\alpha}=(1-{\lambda^2})_+^{\alpha}\psi(\lambda)+(1-{\lambda^2})_+^{\alpha}(1-\psi(\lambda))=F^\alpha_1(\lambda)+F^\alpha_2(\lambda).
$$
where $F^\alpha_1(\lambda)=\sigma_1^{\alpha}(\lambda)\psi(\lambda)$
and $F^\alpha_2(\lambda)=\sigma_1^{\alpha}(\lambda)(1-\psi(\lambda))$.
Now it is enough   to show that if $
\alpha >  \max\left\{0,\frac{2}{3}\left|\frac{1}{2}-\frac{1}{p}\right|-\frac{1}{6}\right\}
$ then
$$
\sup_{t>0}\|F^\alpha_1(t{\LL})\|_{p \to p} \le C < \infty \quad \mbox{and} \quad \sup_{t>0}\|F^\alpha_2(t{\LL})\|_{p \to p} \le C < \infty.
$$
Note that $\supp F^\alpha_2 \subset [1/2,1]$ and if $\alpha +1/2 >s$ then $\sigma_1^{\alpha} \in H^s$ and
$F^\alpha_2 \in H^s$. Now the required estimate for $\|F^\alpha_2(t{\LL})\|_{p \to p} $
follows directly from Theorem~\ref{main}. On the other hand it is not difficult
to note that $F^\alpha_1\in C_c^\infty(-3/4,3/4)$ and required estimates for $\|F^\alpha_1(t{\LL})\|_{p \to p} $ follows from Proposition \ref{th3.1}.
\end{proof}

In fact estimates for $F^\alpha_1$ do not required the sharp result and  follows from standard spectral multipliers theorems. Indeed it follows from the FeynmanÐKac formula
the corresponding heat kernel satisfies Gaussian upper bounds.  Now
the required estimate for $\|F^\alpha_1(t{\LL})\|_{p \to p} $ follows for example from  \cite[Theorem 3.1 and Remark 1, page 451]{DOS} or \cite[Theorem 3.1]{COSY}.

\section{Concluding remarks}\label{sec7}

Next we will  discuss a singular integral  version corresponding to Theorem~\ref{main}.
That is we extend compactly (dyadicaly) supported spectral multipliers
to singular integral version similar as in  Fourier multipliers of Mikhlin-H\"ormander type.
The result can be stated in the following way.

\begin{theorem}\label{singularmain}
Suppose that   $\LL$ is an anharmonic oscillator  defined by \eqref{op}.
 Assume next that  $ 1< p < \infty $,
 $s > \max\{\frac{1}{2},\frac{2}{3}|\frac{1}{2}-\frac{1}{p}|+\frac{1}{3}\}$
 and the bounded Borel function $F$ satisfies
 $$
 \sup_{t>0}\|\eta\delta_tF\|_{H^{s}}<\infty
 $$
 where $\eta\in C_c^\infty(1/4,4)$ is a fixed non-trivial auxiliary function.

 Then the operators $F(\LL)$ are bounded on space $L^p(\R)$  and the following estimate
$$
\|F(\LL)\|_{p\to p} \le C \sup_{t>0}\|\eta\delta_tF\|_{H^{s}} < \infty.
$$
holds for the multiplier $F(\LL)$.
\end{theorem}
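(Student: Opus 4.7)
The plan is to deduce Theorem~\ref{singularmain} from Theorem~\ref{main} via a standard dyadic (Paley--Littlewood) decomposition, combined with the fact that the heat semigroup $e^{-t\LL}$ enjoys Gaussian upper bounds --- a consequence of the Feynman--Kac formula applied to the Schr\"odinger operator $-d^2/dx^2+|x|$ with non-negative potential.

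First I would fix a smooth dyadic partition of unity $\{\phi_k\}_{k\in\Z}$ on $(0,\infty)$, with $\phi_k(\lambda)=\phi_0(2^{-k}\lambda)$ for some $\phi_0\in C_c^\infty(1/2,2)$ satisfying $\sum_{k\in\Z}\phi_0(2^{-k}\lambda)=1$ for every $\lambda>0$. Since $\sigma(\LL)\subset(1,\infty)$, only $k\ge k_0$ contributes for some finite $k_0$. Writing $F=\sum_{k\ge k_0}F_k$ with $F_k:=F\phi_k$, the rescaled pieces $\widetilde F_k(\mu):=F(2^k\mu)\phi_0(\mu)$ are supported in $[1/2,2]$ and satisfy $\|\widetilde F_k\|_{H^s}\le C\|\eta\,\delta_{2^{-k}}F\|_{H^s}\le CA$, where $A:=\sup_{t>0}\|\eta\,\delta_tF\|_{H^s}$. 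Splitting $\phi_0$ into two further bumps supported respectively in $[1/2,1]$ and $[1,2]$ (and absorbing the harmless dilation by $2$), I can apply Theorem~\ref{main} to each rescaled piece and obtain the uniform estimate
\begin{equation*}
\|F_k(\LL)\|_{p\to p}=\|\widetilde F_k(2^{-k}\LL)\|_{p\to p}\le CA.
\end{equation*}

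The remaining, and principal, difficulty is to assemble $F(\LL)=\sum_k F_k(\LL)$ from these pieces: a naive triangle inequality gives a divergent bound, so near-orthogonality has to be exploited. The $L^2\to L^2$ bound is free from the spectral theorem together with the Sobolev embedding $\|F\|_\infty\le C\sup_{t>0}\|\eta\,\delta_tF\|_{H^s}$ (valid for $s>1/2$). For $p\ne 2$ the plan is to invoke the general Calder\'on--Zygmund framework for operators with Gaussian heat kernel bounds developed in \cite{DOS,COSY}: once Theorem~\ref{main} is in hand, the weighted Plancherel / off-diagonal $L^2$ estimates required by that framework for each dyadic piece follow (indeed, such estimates are implicit in the kernel analysis performed in Section~\ref{sec6}), and they feed directly into the abstract Mikhlin--H\"ormander machinery. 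Interpolation between the resulting weak-type bound and the $L^2$ bound, followed by duality, yields Theorem~\ref{singularmain} on the whole range $1<p<\infty$; excluding the endpoints $p=1,\infty$ is exactly what permits the interpolation step.

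The main obstacle will be this summation / orthogonality step. In the classical translation-invariant setting one has canonical Littlewood--Paley projectors, whereas here the Calder\'on--Zygmund argument depends on extracting off-diagonal decay for each $F_k(\LL)$ at the natural length scale $2^{-k/2}$. Some care is needed to verify that the Airy-function estimates of Section~\ref{sec5} and the decomposition techniques of Section~\ref{sec6} can be applied uniformly as $k$ varies; the scale invariance of the relevant Sobolev norms under $F\mapsto\delta_{2^{-k}}F$ and the exact dilation symmetry of the Airy function make this plausible, so that the full argument reduces to a bookkeeping exercise patterned on the proofs in \cite{COSY} and \cite{DOS}.
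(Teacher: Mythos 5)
Your overall architecture coincides with the paper's: the paper disposes of Theorem~\ref{singularmain} in one line by citing \cite[Theorem 3.3]{SYY}, and that cited theorem is exactly the black box you are reconstructing --- a dyadic decomposition of $F$, uniform single-scale $L^p$ bounds supplied by Theorem~\ref{main} after rescaling, Gaussian heat-kernel bounds from Feynman--Kac, a Calder\'on--Zygmund argument to sum the pieces, and interpolation with $L^2$ plus duality. Your reduction to single-scale pieces and your $L^2$ bound via $\|F\|_\infty\le C\sup_t\|\eta\delta_tF\|_{H^s}$ are fine.

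The one place where your sketch, taken literally, would not deliver the stated theorem is the summation step. You propose to feed ``weighted Plancherel / off-diagonal $L^2$ estimates'' into the abstract Mikhlin--H\"ormander machinery of \cite{DOS} and obtain a weak-type bound to interpolate against $L^2$. That machinery produces weak $(1,1)$, and for this operator the requisite \emph{global} Plancherel estimate fails: as Remark~\ref{3.3} and the discussion around Proposition~\ref{th3.1} show, without the spatial cut-off $I_{\lambda/4}$ the $L^1\to L^2$ estimate only holds with the $\|\cdot\|_{\lambda^{3/2},4+\varepsilon}$ norm, which leads to the non-sharp $W^4_s$ condition of Proposition~\ref{th3.1} rather than to $H^s$ with the $p$-dependent exponent. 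Moreover, a weak $(1,1)$ endpoint interpolated with $L^2$ can never reproduce a smoothness threshold that varies with $p$. The mechanism that actually works --- and the content of \cite[Theorem 3.3]{SYY} --- is to run the Calder\'on--Zygmund decomposition at the exponent $p$ itself: the uniform $L^p\to L^p$ bounds for the dyadic pieces coming from Theorem~\ref{main} (with the given $s$ and $p$), combined with off-diagonal decay of each piece at scale $2^{-k/2}$ (obtained from finite propagation speed and the compactly supported Fourier transform of a mollified piece, as in the proof of Proposition~\ref{I/4}), yield a weak-type $(p,p)$ estimate for $F(\LL)$; Marcinkiewicz interpolation with the $L^2$ bound then gives strong type for all exponents between $p$ and $2$, and since the admissible region of $(1/p,s)$ is open one recovers strong type at $p$ itself, with $p>2$ handled by duality. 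With this substitution your outline becomes a correct proof of the cited result; as written, the summation mechanism is the wrong one and would lose the sharp exponent.
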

\begin{proof}
The result and the above estimates follow directly from \cite[Theorem 3.3]{SYY}.
\end{proof}

The following statement is an obvious consequence  of Theorem \ref{main}.
We state this results here  to explain better the relation between Theorem \ref{main}
and the Bochner-Riesz summability result that is Theorem~\ref{Riesz}.

\begin{coro}\label{koniec}
Suppose that   $\LL$ is an anharmonic oscillator  defined by \eqref{op} and that
 $\supp F\subset [1/2,1]$. Assume next that  $ 1\le p \le \infty $,
 $s > \max\{1,\frac{2}{3}|\frac{1}{2}-\frac{1}{p}|+\frac{1}{3}+\frac{1}{2}\}$
 and that $F \in W^1_{s}$.

 Then the operators $F(t\LL)$ are uniformly bounded on space $L^p(\R)$ and
$$
\sup_{t>0} \|F(t\LL)\|_{p\to p} \le C \|F\|_{W^1_{s}}.
$$
\end{coro}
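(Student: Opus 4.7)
The plan is to reduce Corollary~\ref{koniec} to Theorem~\ref{main} by means of the standard one-dimensional Sobolev embedding $W^1_s(\R) \hookrightarrow H^{s-1/2-\epsilon}(\R)$. The entire gap of $\frac{1}{2}$ between the two Sobolev exponents appearing in the hypotheses of Theorem~\ref{main} and Corollary~\ref{koniec} is exactly the loss dictated by this embedding, so no new harmonic analysis is needed.

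First I would verify the embedding: for any $\alpha > 1/2$ and any $F \in W^1_s(\R)$,
$$
\|F\|_{H^{s-\alpha}} \le C_\alpha \|F\|_{W^1_s}.
$$
Indeed, writing $g = (I - d_x^2)^{s/2} F$, the hypothesis gives $g \in L^1(\R)$, hence $\widehat g \in L^\infty$ with $\|\widehat g\|_\infty \le \|g\|_1 = \|F\|_{W^1_s}$. Then, by Plancherel,
$$
\|F\|_{H^{s-\alpha}}^2 = \int_{\R} |\widehat g(\xi)|^2 (1+|\xi|^2)^{-\alpha}\, d\xi \le \|\widehat g\|_\infty^2 \int_{\R} (1+|\xi|^2)^{-\alpha}\, d\xi \le C_\alpha \|F\|_{W^1_s}^2,
$$
where the last integral converges precisely because $\alpha > 1/2$.

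Second, I would exploit the strict inequality in the hypothesis. Since $s > \max\{1,\, \tfrac{2}{3}|\tfrac{1}{2}-\tfrac{1}{p}|+\tfrac{1}{3}+\tfrac{1}{2}\}$, one can choose $\alpha$ slightly larger than $1/2$ for which
$$
s - \alpha > \max\Big\{\tfrac{1}{2},\ \tfrac{2}{3}\big|\tfrac{1}{2}-\tfrac{1}{p}\big|+\tfrac{1}{3}\Big\}.
$$
Combining the embedding above (which preserves the support condition $\supp F \subset [1/2,1]$) with Theorem~\ref{main} applied at the exponent $s-\alpha$ yields
$$
\sup_{t>0}\|F(t\LL)\|_{p\to p} \le C\|F\|_{H^{s-\alpha}} \le C'\|F\|_{W^1_s},
$$
which is the desired bound.

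There is no substantive obstacle here: the only input beyond Theorem~\ref{main} is a textbook Sobolev embedding, and the extra $\frac{1}{2}$ in the Sobolev order in the statement of Corollary~\ref{koniec} is precisely what the embedding $W^1_s \hookrightarrow H^{s-1/2-\epsilon}$ in dimension one consumes. The role of this corollary, as indicated by the authors in the paragraph preceding it, is cosmetic: it recasts Theorem~\ref{main} in a form directly comparable with the Bochner--Riesz statement of Theorem~\ref{Riesz}, since $W^1_s$-control of $\sigma^\alpha$ is the natural analogue of the $L^1$-type smoothness one uses for singular-integral-style Mikhlin--H\"ormander criteria.
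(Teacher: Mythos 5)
Your proposal is correct and follows essentially the same route as the paper: the authors also deduce the corollary from Theorem~\ref{main} via the embedding $W^1_{s_1}\subset H^{s_2}$ for $s_1>s_2+\frac{1}{2}$, which they merely cite as a fact while you supply the (correct) one-line Fourier-transform verification. The choice of $\alpha$ slightly above $1/2$ using the strict inequality in the hypothesis, and the observation that the embedding does not disturb the support condition, are exactly what is needed.
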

\begin{proof}
The results is straightforward consequence of Theorem~\ref{main} and the fact
that $W^1_{s_1}\subset W^2_{s_2}=H^{s_2}$ for all $s_1,s_2$ such that
$s_1> s_2+\frac{1}{2}$.
\end{proof}

\begin{remark}
Note that one can use estimate \eqref{e4.1} to show that Corollary \ref{koniec} follows
also from Theorem~\ref{Riesz}. Then it is also not difficult to notice that using an argument
similar as in proof of Theorem~\ref{Riesz} above that this theorem also follows from
the above corollary. That is  Corollary \ref{koniec} and Theorem~\ref{Riesz}
are equivalent statements of the same result. One can also formulate singular integral
version of  Corollary \ref{koniec} similarly as in Theorem~\ref{singularmain} above.
\end{remark}

One can use the same argument as in paragraph above to formulate Proposition \ref{AWT} in
an equivalent way using the same terms as in  Corollary \ref{koniec} with operator $\LL$ replaced by $\HH$.
This leads to a question if we could replaced the operator $\LL$ by $\HH$ also in
the formulation of Theorem~\ref{main}. The answer to this question is likely to be positive but
as the format of Theorem~\ref{main} is essentially stronger than the statement of Corollary \ref{koniec}. Therefore  the proof of such statement requires new techniques and the argument used in \cite{T} can not be adapted to yields the version of Theorem~\ref{main} for $\HH$ without
significant changes. Most likely such proof would require completely new approach. We leave this question open for future studies.

On the other hand the result obtained in \cite{AW} guarantees convergence of
the Bochner-Riesz mean of order $0$ that is simply the convergence of eigenfunction expansion
in $L^p(\rr)$ spaces for all $ 4/3<p <4$ whereas Theorem \ref{Riesz} requires the strictly positive order to assure convergence for operator $\LL$ in this range. It is likely that the main result of  \cite{AW} holds also for $\LL$ but this again would require a completely new approach and we again leave this question open for future studies.

If one consider spaces $L^p$ for $4/3<p< 4$ then Theorem~\ref{main} gives essentially stronger
estimates than Proposition~\ref{th3.1}. Also when applied to $L^1$ Theorem~\ref{main} is significantly deeper and more interesting than Proposition~\ref{th3.1}. Note however, that none of the
spaces $W^2_{2/3}$ and $W^{4}_{1/2}$ contains the other so formally speaking these two results are of independent interested and none follows from the other. One could ask whether to assure
boundedness of the multiplier $F(\LL)$ on $L^1$ it is enough  to assume that $F$ is in the
space  $W^{3}_{s}$ for some $s>1/2$. A positive answer to this problem would imply on  the level of $L^1$ both estimates from Theorem~\ref{main} and Proposition~\ref{th3.1}.  It is likely though that the answer to this question is negative
but we will not study this issue here. We point out however that the consideration of imaginary powers
$\LL^{is}$ shows the $1/2$ is the minimal possible order of differentiability for spectral multipliers in the dimension one, see \cite[Theorem 1]{SW}. Then if estimates would hold with
the norm of $W^{p}_{1/2}$ norm of $F$ then necessarily $p>3$. Otherwise such estimates would imply convergence of Bochner-Riesz means of order smaller then $1/6$ and this would contradict
Theorem~\ref{verlp}. As we mentioned above we expect that even the norm  $W^{3+\epsilon }_{1/2+\epsilon }$ of $F$ for some very  small positive $\epsilon$ is still not enough to ensure
$L^1$ boundedness  of the multiplier $F(t\LL)$.

\noindent

\end{document}